
\documentclass[11pt]{amsart}

\usepackage{amsmath, amsthm, amssymb}

\usepackage{palatino}         
\linespread{1.05}             


\usepackage[abs]{overpic}		  

\usepackage{xcolor}
\definecolor{indigo}{rgb}{0.29, 0.0, 0.51}  
\definecolor{dred}{RGB}{237, 28, 36}
\usepackage[colorlinks, urlcolor=indigo, linkcolor=indigo, citecolor=indigo]{hyperref}
\usepackage[normalem]{ulem}
\marginparwidth=2.4cm 

\textheight 8.3in                       
\textwidth 6in \hoffset 0.25in          
\evensidemargin 0in \oddsidemargin 0in 

\theoremstyle{plain}
\newtheorem{theorem}{Theorem}
\newtheorem{corollary}[theorem]{Corollary}
\newtheorem{proposition}[theorem]{Proposition}
\newtheorem{lemma}[theorem]{Lemma}
\newtheorem{question}[theorem]{Question}
\newtheorem{conjecture}[theorem]{Conjecture}

\theoremstyle{definition}
\newtheorem{definition}[theorem]{Definition}

\theoremstyle{remark}
\newtheorem{remark}[theorem]{Remark}
\newtheorem{example}[theorem]{Example}

\numberwithin{theorem}{section}


\newcommand{\dfn}[1]{{\em #1}}        
\newcommand{\R}{\mathbb{R}}           
\newcommand{\Z}{\mathbb{Z}}           
\newcommand{\Q}{\mathbb{Q}} 
\newcommand{\C}{\mathbb{C}}           

\makeatletter
\newcommand*\bigcdot{\mathpalette\bigcdot@{0.6}}
\newcommand*\bigcdot@[2]{\mathbin{\vcenter{\hbox{\scalebox{#2}{$\m@th#1\bullet$}}}}}
\makeatother




\DeclareMathOperator\tb{tb}                   




\begin{document}

\title{Symplectic rational homology ball fillings \linebreak of Seifert fibered spaces}

\author{John B. Etnyre}

\author{Burak Ozbagci}

\author{B\"{u}lent Tosun}

\address{Department of Mathematics \\ Georgia Institute of Technology \\ Atlanta \\ Georgia}

\email{etnyre@math.gatech.edu}

\address{Department of Mathematics \\ Ko\c{c} University \\ Istanbul \\ Turkey}

\email{bozbagci@ku.edu.tr}

\address{Institute for Advanced Study at Princeton and Department of Mathematics\\ University of Alabama\\Tuscaloosa\\Alabama}

\email{btosun@ua.edu}

\subjclass[2000]{57R17}

\begin{abstract}
We characterize when some small Seifert fibered spaces can be the convex boundaries of symplectic rational homology balls and give strong restrictions for others to bound such manifolds. In particular, we show that the only spherical $3$-manifolds, oriented as  links of the corresponding quotient singularities, which admit symplectic rational homology ball fillings, are the lens spaces $L(p^2,pq-1)$ previously identified by Lisca. In a different  direction, we provide evidence for  Gompf's conjecture that Brieskorn spheres do not bound Stein domains in $\C^2$. Finally, we establish restrictions on Lagrangian disk fillings of certain Legendrian knots in small Seifert fibered spaces.
\end{abstract}

\maketitle


\section{Introduction}

There has been considerable recent interest in determining which rational homology $3$-spheres bound rational homology $4$-balls \cite{AcetoGolla2017, AcetoGollaLecuona2018, BhupalStipsicz11, ChoePark2021, Lecuona2018, Lecuona2019, Lisca2007,  Simone2021}. This problem is interesting in its own right, but it also plays a key role in the study of generalized rational blowdowns \cite{FintushelStern1997, ParkStipsicz2014, Symington2001}. In particular, if one wishes to perform such a blowdown in the symplectic category, it becomes essential to understand when a rational homology 3-sphere equipped with a contact structure admits a symplectic rational homology ball  filling. 

This article identifies contact structures on certain small Seifert fibered spaces that admit symplectic rational homology ball fillings, and establishes strong obstructions in most other cases. We also point out a striking difference between the smooth and the symplectic categories: a small Seifert fibered space may bound a rational homology ball {\em smoothly} but fail to do so  {\em symplectically}.

As a corollary of our results and their method of proof, we will prove a special case of a conjecture due to Gompf: 

\begin{conjecture}[Gompf 2013, \cite{Gompf13}]\label{con: gompf}
 No Brieskorn integer homology sphere (other than $S^3$) admits a pseudoconvex embedding in $\mathbb{C}^2$, with either orientation.
\end{conjecture}

Note that, if an integer homology $3$-sphere has a pseudoconvex embedding in $\mathbb{C}^2$, then the compact region it bounds is necessarily a Stein integral homology ball. In connection with Gompf's conjecture, we show that for $(p,q)\not=(2,3)$,  the Brieskorn homology sphere $\Sigma(p,q, pqn+1)$ (see Section~\ref{brieskornS} for precise definitions and conventions) 
does not even bound a symplectic rational homology ball.

As another corollary, we show that on a spherical $3$-manifold, which is oriented as the link of the corresponding quotient singularity, only the canonical structure can admit a symplectic rational homology ball filling. To place this in context, we begin by recalling the well-studied case of lens spaces. Lisca \cite{Lisca2007} characterized the lens spaces that smoothly bound rational homology balls. He showed that the lens space $L(p,q)$ bounds a rational homology ball if and only if $p/q$ belongs to a certain subset $\mathcal{R}$ of positive rationals $\Q_+$. Since an actual definition of  $\mathcal{R}$ will not be needed here, we refer the reader to \cite{Lisca2007} for details. 

Each lens space $L(p,q)$ carries a unique universally tight contact structure $\xi_{can}$,  up to isomorphism. This isomorphism class consists of two non-isotopic contact structures, $\pm \xi_{can}$, except in the special case $q=p-1$, where $\xi_{can}$ is even isotopic to $-\xi_{can}$. Define $\mathcal{O} \subset \Q_+$ by 
\begin{equation}\label{eq: O}
\mathcal{O}=\left\{ \dfrac{m^2}{mh-1} \; | \; h < m \;  \mbox{are coprime positive integers} \right\}.   
\end{equation} 

By construction, $\mathcal{O} \subset \mathcal{R}$. Lisca \cite{Lisca08} proved that $(L(p,q), \xi_{can})$ admits a symplectic rational homology ball filling if and only if $p/q \in \mathcal{O}$, and that such a filling is unique up to diffeomorphism. In \cite{BhupalOno2012}, Bhupal and Ono enhanced this result to prove uniqueness up to symplectic deformation equivalence.

Moreover, Golla and Starkston \cite[Proposition A.1]{GollaStarkston2022} showed that the lens space $L(p,q)$ equipped with any tight contact structure other than $\xi_{can}$, does not admit a symplectic  rational homology ball filling. Alternative independent proofs of this fact were also given by Roy and the first author \cite[Lemma 1.6]{EtnyreRoy21}, Christian and Li \cite{ChristianLi23}, and by the first and third authors \cite[Proposition 11]{EtnyreTosun2023}. A partial case restricted to  $p/q \in \mathcal{O}$ was established earlier by Fossati \cite[Theorem 4]{Fossati2020}.  

Combining these results, we obtain the following complete classification: $(L(p,q), \xi)$ admits a  symplectic rational homology ball  filling if and only if $p/q \in \mathcal{O}$, and $\xi$ is contactomorphic to $\xi_{can}$. Moreover,  for any $p/q \in \mathcal{O}$, the contact $3$-manifold $(L(p,q), \xi_{can})$ admits a unique symplectic rational homology ball filling, up to symplectic deformation.

Since every lens space is a spherical (also known as an elliptic) 3-manifold, this raises the natural question of whether the classification above extends to all spherical 3-manifolds.  As a corollary of our main results, we prove that this is indeed the case, provided that the spherical 3-manifolds are canonically oriented as links of the corresponding quotient singularities. 

We emphasize, however,  that this result does not necessarily hold true for a spherical $3$-manifold equipped with the orientation opposite to the canonical one it carries when viewed as the singularity link (see Remark~\ref{rem: notTrue} below). In  fact, in subsequent work  \cite{EtnyreOzbagciTosun25pre}, we provide a complete classification of spherical $3$-manifolds {\em with either orientations}, which admit  symplectic rational homology ball fillings.

\subsection{Fillings of small Seifert fibered spaces}\label{subsec: fillsfs}

For $e_0\in \Z$ and $r_i\in(0,1) \cap \mathbb{Q}$,  
the small Seifert fibered space $Y(e_0; r_1, r_2, r_3)$ is described  by the surgery diagram depicted in Figure~\ref{fig:small}. 
\begin{figure}[htb]
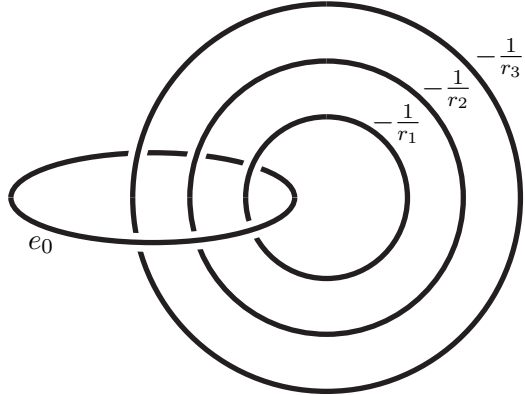
{
\begin{overpic}
{fig/ssfsfigure}
\put(10, 55){$e_0$}
\put(140, 100){$-\frac 1{r_1}$}
\put(159, 113){$-\frac 1{r_2}$}
\put(179, 125){$-\frac 1{r_3}$}
\end{overpic}}
 \caption{A surgery diagram for small Seifert fibered space $Y(e_{0};r_{1},r_{2},r_{3})$.}
  \label{fig:small}
\end{figure}
Recall that a {\em rational homology ball} is a 4-manifold $X$ such that $H_{*}(X; \mathbb{Q})=H_{*}(B^4; \mathbb{Q})$. Note that the boundary of a rational homology ball is a rational homology sphere, and we mention that $Y(e_0;r_1,r_2,r_3)$ is a rational homology sphere if and only if $e_0+r_1+r_2+r_3\not=0$. In particular, if $e_0\not=-1$ or $-2$, then $Y(e_0;r_1,r_2,r_3)$ is automatically a rational homology sphere, and if $e_0=-1$ or $-2$, then $Y(e_0;r_1,r_2,r_3)$  is still  a rational homology sphere,  except in the special cases where $r_1+r_2+r_3=1$ or $2$.  

Since the symplectic fillability of a closed contact $3$-manifold depends on the underlying tight contact structure, we begin by recalling what is known about the classification of  tight contact structures on small Seifert fibered spaces. 

Wu \cite{Wu06} proved that when $e_0<-2$, any tight contact structure on $Y=Y(e_0;r_1,r_2,r_3)$ arises from Legendrian surgery on a suitable Legendrian realization of the surgery diagram in Figure~\ref{fig:plumbing}. Here the surgery coefficients $-a^i_j$ are determined by the continued fraction expansion \[
-1/r_i=[-a^i_0,-a^i_1,\ldots, -a^i_{n_i}]=-a^i_0-
\cfrac{1}{-a^i_1- \cfrac{1}{\ddots  -\cfrac{1}{-a^i_{n_i}}}}
\]  where $a_j^i\geq 2$. 
\begin{figure}[htb]
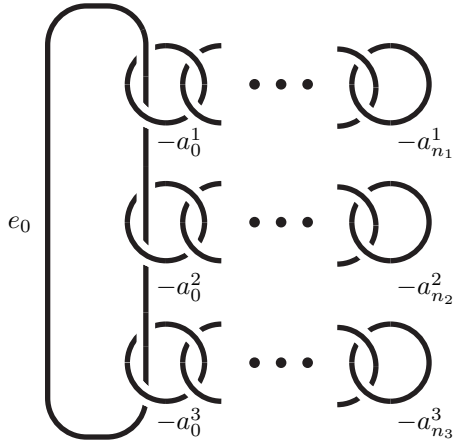
{\small
\begin{overpic}
{fig/plumbing}
\put(-14, 80){$e_0$}
\put(42, 110){$-a^1_0$}
\put(133, 110){$-a^1_{n_1}$}
\put(42, 55){$-a^2_0$}
\put(133, 55){$-a^2_{n_2}$}
\put(42, 4){$-a^3_0$}
\put(133, 4){$-a^3_{n_3}$}
\end{overpic}}
\caption{A surgery diagram for $Y(e_0;r_1,r_2,r_3)$.}
\label{fig:plumbing}
\end{figure}
By a {\em suitable Legendrian realization} we mean the following: each unknot in the surgery diagram with coefficient  $-a_j^i$ is realized as a Legendrian unknot with Thurston-Bennequin invariant $1-a^i_j$. In particular, such a Legendrian unknot can be obtained from the maximal Thurston–Bennequin unknot by stabilizing $a^i_j-2$ times. The central unknot with surgery coefficient $e_0$  can also be realized in this manner. Furthermore, Ghiggini \cite{Ghiggini08} (see also \cite{Tosun20}) showed that  the same construction applies when $e_0=-2$, provided that  $Y$ is an $L$-space.

\begin{definition}\label{m-consistent} 
Consider the surgery diagram in Figure~\ref{fig:plumbing} for $Y = Y(e_0; r_1,r_2,r_3)$. 
\begin{itemize}
\item Each chain of unknots linked to the central $e_0$--framed unknot is called a \emph{leg} of the diagram.
\item Given a Legendrian realization of the surgery diagram, we say that a leg is \emph{consistent} if all stabilizations of the unknots in that leg have the same sign; otherwise, the leg is \emph{inconsistent}.
 \item The surgery diagram itself is called \emph{consistent} if all stabilizations of all Legendrian unknots in the diagram have the same sign.
\item The diagram is called \emph{mostly consistent} if each leg is consistent, but the diagram as a whole is not.
\item  A \emph{consistent} (resp. \emph{mostly consistent}) contact structure on $Y$ is the one obtained by Legendrian surgery on a consistent (resp. mostly consistent) diagram.
 \end{itemize}
\end{definition}

The \dfn{canonical contact structure} $\xi_{can}$ on $Y$ is the one obtained by Legendrian surgery on a consistent diagram, which is well-defined up to isomorphism. There is also an algebro–geometric description of $\xi_{can}$. The surgery diagram in Figure~\ref{fig:plumbing} also describes a $4$-manifold $X$ with a negative definite intersection form, provided that $e_0 <-2$ or $e_0=-2$ and $Y$ is an $L$-space. Therefore,   the boundary of $X$ is the link of a normal surface singularity \cite{Grauert1962}, {\em cf} \cite[Page~333]{Neumann1981} or \cite[Page~111]{Nemethi2013}, and as such the link $Y(e_0;r_{1},r_{2},r_{3})$ carries a canonical contact structure, known as the {\it Milnor fillable} contact structure,  which is uniquely determined up to isomorphism \cite{CaubelNemethiPopescu-Pampu2006}. Moreover, according to \cite[Theorem~8.1]{BhupalOzbagci11}, the Milnor fillable contact structure on $Y$ coincides with the  canonical contact structure $\xi_{can}$. 

Suppose that the small Seifert fibered space $Y=Y(e_0;r_{1},r_{2},r_{3})$ is the oriented link of a normal surface singularity. Then, according to \cite[Theorem 1.4]{BhupalStipsicz11},   $(Y, \xi_{can})$ admits a  symplectic rational homology ball filling if and only if the minimal good resolution graph (a negative definite star-shaped tree with three legs)  of the singularity belongs to one of the ten infinite families shown in \cite[Figure 1]{BhupalStipsicz11}  (see also \cite{SSW08, ParkShinStipsicz13}). For the convenience of the reader, we have reproduced these infinite families of plumbing graphs in Figures~\ref{fig:plumbing1} and~\ref{fig:plumbing2} below.  Here, we denote by $\mathcal{QHB}$ the set of small Seifert fibered spaces described by these families. 

\begin{figure}[htb]
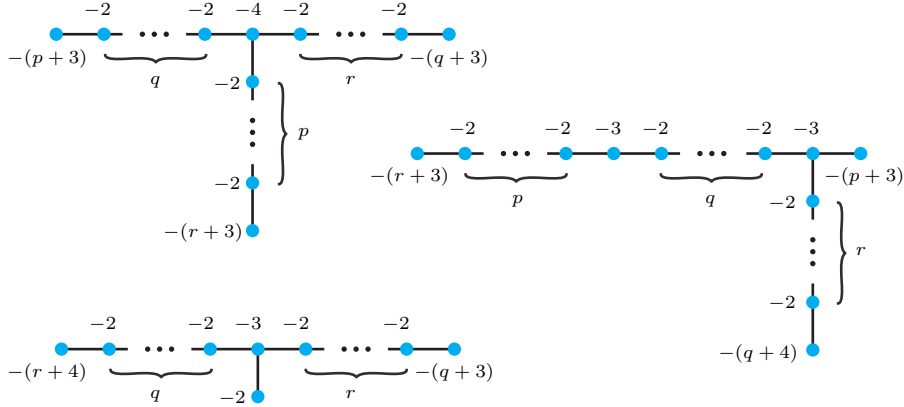
{\tiny
\begin{overpic}
{fig/plumbinggraph1}
\put(-16, 129){$-(p+3)$}
\put(13, 147){$-2$}
\put(38, 121){$q$}
\put(52, 147){$-2$}
\put(70, 147){$-4$}
\put(88, 147){$-2$}
\put(112, 121){$r$}
\put(125, 147){$-2$}
\put(136, 129){$-(q+3)$}
\put(62, 119){$-2$}
\put(94, 101){$p$}
\put(62, 81){$-2$}
\put(43, 63){$-(r+3)$}
\put(121, 84){$-(r+3)$}
\put(151, 102){$-2$}
\put(175, 76){$p$}
\put(187, 102){$-2$}
\put(206, 102){$-3$}
\put(224, 102){$-2$}
\put(248, 76){$q$}
\put(263, 102){$-2$}
\put(281, 102){$-3$}
\put(293, 84){$-(p+3)$}
\put(272, 74){$-2$}
\put(305, 56){$r$}
\put(272, 36){$-2$}
\put(253, 17){$-(q+4)$}
\put(-16, 10){$-(r+4)$}
\put(15, 28){$-2$}
\put(38, 2){$q$}
\put(52, 28){$-2$}
\put(70, 28){$-3$}
\put(88, 28){$-2$}
\put(112, 2){$r$}
\put(126, 28){$-2$}
\put(138, 10){$-(q+3)$}
\put(64, 0){$-2$}
\end{overpic}}
\caption{Plumbing diagrams in $\mathcal{QHB}$ for $Y(e_0;r_1,r_2,r_3)$ with $e_0\leq -3$.}
\label{fig:plumbing1}
\end{figure}
\begin{figure}[htb]
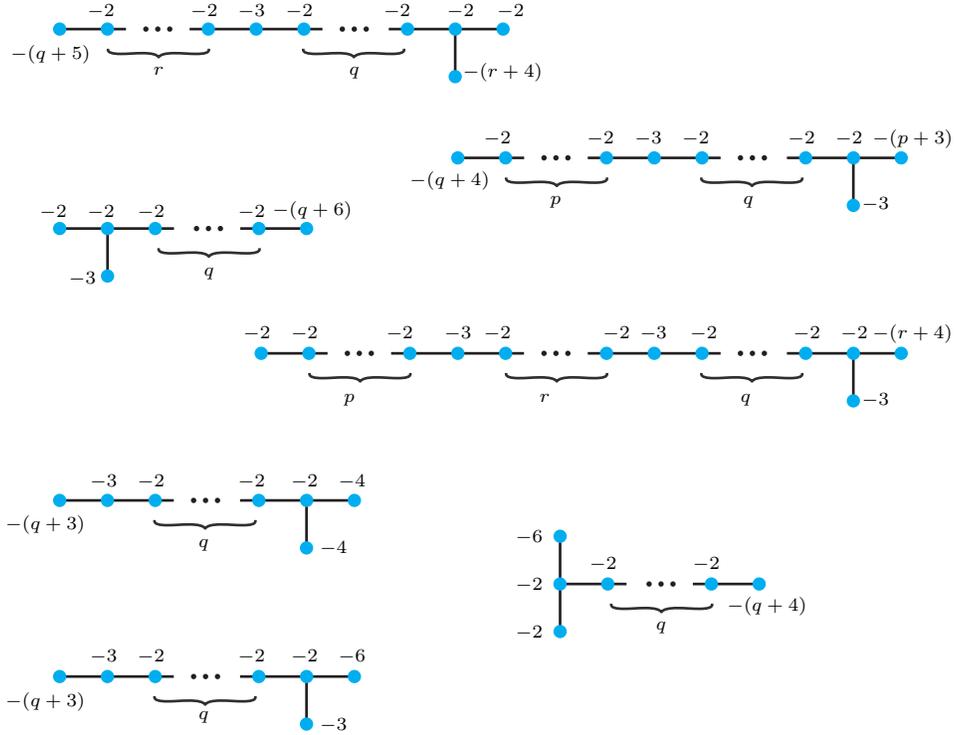
{\tiny
\begin{overpic}
{fig/plumbing2}
\put(-16, 255){$-(q+5)$}
\put(13, 271){$-2$}
\put(38, 249){$r$}
\put(52, 271){$-2$}
\put(70, 271){$-3$}
\put(88, 271){$-2$}
\put(112, 249){$q$}
\put(125, 271){$-2$}
\put(149, 271){$-2$}
\put(168, 271){$-2$}
\put(155, 248){$-(r+4)$}
\put(135, 207){$-(q+4)$}
\put(163, 223){$-2$}
\put(188, 200){$p$}
\put(202, 223){$-2$}
\put(220, 223){$-3$}
\put(238, 223){$-2$}
\put(261, 200){$q$}
\put(278, 223){$-2$}
\put(296, 223){$-2$}
\put(310, 223){$-(p+3)$}
\put(306, 198){$-3$}
\put(-5, 195){$-2$}
\put(13, 195){$-2$}
\put(32, 195){$-2$}
\put(57, 173){$q$}
\put(70, 195){$-2$}
\put(83, 196){$-(q+6)$}
\put(6, 170){$-3$}
\put(72, 149){$-2$}
\put(90, 149){$-2$}
\put(110, 125){$p$}
\put(126, 149){$-2$}
\put(148, 149){$-3$}
\put(163, 149){$-2$}
\put(184, 125){$r$}
\put(208, 149){$-2$}
\put(222, 149){$-3$}
\put(241, 149){$-2$}
\put(260, 125){$q$}
\put(280, 149){$-2$}
\put(298, 149){$-2$}
\put(310, 149){$-(r+4)$}
\put(306, 124){$-3$}
\put(-18, 77){$-(q+3)$}
\put(14, 93){$-3$}
\put(32, 93){$-2$}
\put(55, 70){$q$}
\put(70, 93){$-2$}
\put(90, 93){$-2$}
\put(108, 93){$-4$}
\put(101, 68){$-4$}
\put(-18, 10){$-(q+3)$}
\put(14, 27){$-3$}
\put(32, 27){$-2$}
\put(55, 5){$q$}
\put(70, 27){$-2$}
\put(90, 27){$-2$}
\put(108, 27){$-6$}
\put(101, 1){$-3$}
\put(175, 72){$-6$}
\put(175, 54){$-2$}
\put(175, 36){$-2$}
\put(203, 62){$-2$}
\put(228, 39){$q$}
\put(242, 62){$-2$}
\put(255, 46){$-(q+4)$}
\end{overpic}}
\caption{Plumbing diagrams in $\mathcal{QHB}$ for $Y(-2;r_1,r_2,r_3)$.}
\label{fig:plumbing2}
\end{figure}

Our first two main results are as follows. 

\begin{theorem}\label{main1}
Suppose that $e_0\leq -3$. Then  $(Y(e_0;r_1,r_2,r_3), \xi)$ admits a symplectic rational homology ball filling if and only if  $Y(e_0;r_1,r_2,r_3) \in\mathcal{QHB}$ and $\xi$ is contactomorphic to the Milnor fillable contact structure $\xi_{can}$. \end{theorem}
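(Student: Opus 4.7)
The plan is to split the theorem into its two directions and handle them separately, with almost all the work going into the reverse implication.

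For the forward direction, the existence of a symplectic rational homology ball filling of $(Y,\xi_{can})$ for $Y\in\mathcal{QHB}$ is already in the literature recalled earlier: Bhupal--Stipsicz construct such a filling as a smoothing (Milnor fiber) of the associated surface singularity, and the Milnor fillable contact structure on the link of the singularity agrees with $\xi_{can}$ by the Bhupal--Ozbagci theorem cited above. So this half of the theorem amounts to collecting prior work.

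For the reverse direction, suppose $W$ is a symplectic rational homology ball filling of $(Y(e_0;r_1,r_2,r_3),\xi)$ with $e_0\leq -3$. I split the argument into a smooth stage and a contact stage. The smooth stage is immediate: forgetting the symplectic data, $Y$ smoothly bounds the rational homology ball $W$, and the Bhupal--Stipsicz classification of small Seifert fibered spaces with $e_0\leq -3$ that smoothly bound rational homology balls forces $Y\in\mathcal{QHB}$.

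The contact stage, showing $\xi\cong\xi_{can}$, is the heart of the theorem. I would first invoke Wu's classification: every tight contact structure on $Y$ is obtained by Legendrian surgery on a Legendrian realization of the plumbing in Figure~\ref{fig:plumbing}, with each $(-a_j^i)$-framed unknot stabilized $a_j^i - 2$ times and the isotopy class of $\xi$ recorded by the resulting sign pattern. Each such $\xi$ thereby inherits a negative definite Stein plumbing filling $P_\xi$ whose first Chern class is computable from the rotation numbers of the realization. The plan is to compare two expressions for the Gompf invariant $d_3(\xi)$: one coming from $P_\xi$ and one coming from the hypothetical $W$. Because $W$ is a rational homology ball, $\chi(W)=1$ and $\sigma(W)=0$, and Gompf's formula yields
\[
d_3(\xi) \;=\; \tfrac{1}{4}\bigl(c_1(W,J)^2 \;-\; 2\bigr),
\]
where $c_1(W,J)^2\in\mathbb{Q}$ is sharply restricted since $c_1(W,J)$ is a torsion class. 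Matching this against the $d_3$ values read off from every possible Legendrian realization of Figure~\ref{fig:plumbing} should rule out every inconsistent sign pattern.

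The main obstacle will be that $d_3$ alone is known not to separate all tight contact structures on small Seifert fibered spaces. To close that gap where necessary, I would reinforce the $d_3$ comparison with the Ozsv\'ath--Szab\'o contact invariant $c(\xi)$ together with the $d$-invariants of $Y$, which for $Y\in\mathcal{QHB}$ must vanish on the metabolizer of $H_1(Y;\mathbb{Z})$ induced by any rational homology ball filling; alternatively, via a cap-and-glue construction, embedding $P_\xi$ into a closed symplectic manifold to obtain a symplectic cap $C_\xi$ for $(Y,\xi)$ and analyzing Seiberg--Witten or Heegaard Floer data on $W\cup_Y C_\xi$. The remaining check is combinatorial, proceeding family by family through the ten plumbing graphs in $\mathcal{QHB}$ depicted in Figures~\ref{fig:plumbing1} and \ref{fig:plumbing2}, and the expectation is that the rigidity of these ``canonical'' singularities forces the consistent sign pattern, and hence $\xi\cong\xi_{can}$, to be the only surviving possibility.
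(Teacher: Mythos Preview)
Your forward direction is correct and matches the paper.

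Your reverse direction has a genuine gap in the ``smooth stage.'' Bhupal--Stipsicz do \emph{not} classify which small Seifert fibered spaces with $e_0\le -3$ smoothly bound rational homology balls; they classify which singularity links with the canonical contact structure admit a \emph{symplectic} rational homology ball filling. These are very different questions, and the paper explicitly emphasizes the gap (Remark~\ref{nege0smooth} constructs smooth rational homology ball fillings of $Y(e_0;r_1,r_2,r_3)$ for arbitrarily negative $e_0$, none of which can lie in $\mathcal{QHB}$). So from ``$Y$ smoothly bounds a rational homology ball'' you cannot conclude $Y\in\mathcal{QHB}$, and your split into a smooth stage followed by a contact stage collapses.

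Your contact stage is also on shaky ground. The $\theta$-invariant argument you sketch does work cleanly \emph{once} $Y\in\mathcal{QHB}$ is known: then $\theta(\xi_{can})=-2$, and Proposition~\ref{canminimizes} gives $\theta(\xi)>-2$ for every other $\xi$, so Lemma~\ref{lem: rhb} finishes. (Indeed this is exactly how the paper handles the $e_0=-2$, $Y\in\mathcal{QHB}$ case in Proposition~\ref{posfillsm3}.) But without $Y\in\mathcal{QHB}$ you have no control over $\theta(\xi_{can})$, and the paper itself notes (in the remark inside the proof of Theorem~\ref{nonLspace}) that inconsistent contact structures can have $\theta=-2$, so $d_3$ alone cannot carry the argument. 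Your proposed backups (contact invariant, $d$-invariants, cap-and-glue) are too vague to assess.

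The paper's route is entirely different and avoids both problems at once. It does not try to first pin down $Y$ smoothly. Instead, Proposition~\ref{inconsistent1} shows directly that \emph{any} inconsistent contact structure on \emph{any} $Y(e_0;r_1,r_2,r_3)$ with $e_0\le -3$ fails to bound a symplectic rational homology ball, by finding a mixed torus and applying the Christian--Menke splitting theorem (Theorem~\ref{splittingthm}): every $e$-splitting produces two rational homology spheres, neither of which can bound a rational homology $S^1\times D^3$, contradicting Remark~\ref{homologyofsplitting}. Only after this is the consistent case $\xi=\xi_{can}$ handled via Bhupal--Stipsicz (Proposition~\ref{secondpartof1}), which simultaneously forces $Y\in\mathcal{QHB}$. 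The key idea you are missing is this splitting-along-mixed-tori technique.
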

We prove this theorem in Section~\ref{Mneg}.

\begin{remark} As a consequence of Theorem~\ref{main1}, we obtain the following:
\begin{itemize}
    \item If $e_0 < -4$, then there is no contact structure $\xi$ on $Y=Y(e_0; r_1,r_2,r_3)$ so that 
    $(Y, \xi)$ admits a symplectic rational homology ball filling.
    \item If $e_0 = -3$ or $e_0 = -4$, then only very few of the Seifert fibered spaces $Y=Y(e_0; r_1,r_2,r_3)$ carries contact structures $\xi$ so that  $(Y, \xi)$ admits a symplectic rational homology ball filling. In these cases, whenever such a contact structure exists, it is unique up to isomorphism and is universally tight.
\end{itemize}
\end{remark}
\begin{remark}
It is interesting to contrast this with the situation for {\em smooth} rational homology ball fillings. In the smooth category, such fillings can be constructed for small Seifert fibered spaces for any value of  $e_0 \in \Z$. See Remark~\ref{nege0smooth} below and also \cite{Aceto2020, Lecuona2018}. 
\end{remark}

We now turn to the case of small Seifert fibered spaces with $e_0=-2$. 

\begin{theorem}\label{main2}
If $Y=Y(-2;r_1,r_2,r_3)$ belongs to  $\mathcal{QHB}$, then $(Y, \xi)$ admits a symplectic rational homology ball filling if and only if $\xi$ is contactomorphic to $\xi_{can}$.  If $Y$ does not belong to $\mathcal{QHB}$ but it is an $L$-space, then $(Y, \xi)$ can admit a symplectic rational homology ball filling only if $\xi$ is consistent or mostly consistent. 
\end{theorem}

This theorem and its corollary below are proven in Section~\ref{Lnegm2}. At present, we do not know of any $Y \notin \mathcal{QHB}$ that admits a  symplectic rational homology ball filling. This motivates the following conjecture.

\begin{conjecture}
Suppose that $Y=Y(-2;r_1,r_2,r_3)$ is a small Seifert fibered space, which is also an $L$-space. If $(Y, \xi)$ admits a symplectic rational homology ball filling, then $Y \in \mathcal{QHB}$ and $\xi$ is contactomorphic to $\xi_{can}$. 
\end{conjecture}

While we do not know how to prove this conjecture yet, Theorem~\ref{main2} already places strong restrictions on the possible contact structures that can admit symplectic rational homology ball fillings. 

\begin{corollary}\label{possiblefillingsfore0m2}
If $Y=Y(-2;r_1,r_2,r_3)$ is an $L$-space, then, up to isomorphism,  there are at most four contact structures  $\xi$ on $Y$ so that $(Y, \xi)$ admits a  symplectic rational homology ball filling. 
\end{corollary}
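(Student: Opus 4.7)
The plan is to deduce the corollary directly from Theorem~\ref{main2} by enumerating the consistent and mostly consistent Legendrian realizations of the surgery diagram in Figure~\ref{fig:plumbing}. Since $Y=Y(-2;r_1,r_2,r_3)$ is an $L$-space, the result of Ghiggini quoted in the excerpt guarantees that every tight contact structure on $Y$ arises as Legendrian surgery on an appropriate Legendrian realization of this diagram. Theorem~\ref{main2} then restricts the contact structures that could admit a symplectic rational homology ball filling to the consistent or mostly consistent ones, so I would focus on counting those.

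When $e_0=-2$, the central unknot has surgery coefficient $-2$, and its unique appropriate Legendrian realization is the maximal $\tb=-1$ unknot, which carries no stabilizations and therefore contributes no sign choice. On each of the three legs, the consistency condition forces all stabilizations in that leg to share a single sign $\varepsilon_i\in\{+,-\}$. Consequently, every consistent or mostly consistent Legendrian realization is recorded by a sign triple $(\varepsilon_1,\varepsilon_2,\varepsilon_3)\in\{+,-\}^3$, giving at most $2^3=8$ candidate contact structures on $Y$.

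Finally, I would use that reversing the orientation on the contact plane field swaps positive and negative stabilizations throughout the diagram, inducing the free involution $(\varepsilon_1,\varepsilon_2,\varepsilon_3)\mapsto(-\varepsilon_1,-\varepsilon_2,-\varepsilon_3)$ on sign triples. This cuts the count in half, leaving at most $8/2=4$ contact structures up to orientation of the plane field, as claimed. The only mild subtlety worth recording is the degenerate case in which some leg has all continued fraction coefficients equal to $-2$, so that no stabilization occurs on that leg and its sign is vacuous; this can only decrease the count, consistent with the ``at most four'' bound. Since Theorem~\ref{main2} supplies all the substantive input, the remainder is essentially bookkeeping and I do not anticipate any real obstacle.
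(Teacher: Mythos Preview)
Your proposal is correct and follows essentially the same approach as the paper's own proof: both invoke Theorem~\ref{main2} (equivalently Proposition~\ref{posfillsm2}) to reduce to consistent or mostly consistent structures, then count the resulting sign patterns on the three legs up to the global sign flip. Your version is slightly more explicit in noting that the central $-2$-framed unknot carries no stabilization and in handling the degenerate case of a leg with no stabilizations, but the substance is the same.
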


Recall (see Lemma~\ref{lem: rhb}) that if a rational homology sphere with a contact structure $\xi$ can be symplectically filled by a rational homology ball, then $\theta(\xi)=-2$, where $\theta$ is Gompf's 3-dimensional homotopy invariant   \cite{Gompf98} (see Section~\ref{recalltheta} for precise definitions). The following proposition will be useful in our arguments and may also be of independent interest. 

\begin{proposition}\label{canminimizes}
Let $Y=Y(e_0;r_1,r_2,r_3)$ be a small Seifert fibered space with $e_0\leq -3$ or $e_0=-2$ and $Y$ an $L$-space. Then 
\[
\theta(\xi_{can})<\theta(\xi)
\]
for any contact structure $\xi$ on $Y$ not {isotopic} to $\pm \xi_{can}$. 
\end{proposition}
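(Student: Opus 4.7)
The plan is to combine the classification of tight contact structures on $Y$ with Gompf's $d_3$-formula applied to the Stein fillings produced by Legendrian surgery on the plumbing graph. By Wu's classification \cite{Wu06} in the case $e_0 \leq -3$ and by Ghiggini's \cite{Ghiggini08} (extended by \cite{Tosun20}) in the case $e_0 = -2$ with $Y$ an $L$-space, every tight $\xi$ on $Y$ is obtained by Legendrian surgery on an appropriate realization of the diagram in Figure~\ref{fig:plumbing} in which the unknot at a vertex $v$ with coefficient $-a(v)$ has Thurston-Bennequin number $1-a(v)$ and rotation number $r(v) \in \{-(a(v)-2), -(a(v)-2)+2, \ldots, a(v)-2\}$. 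The canonical contact structure $\xi_{can}$ and its orientation reverse correspond to the two configurations in which $r(v) = \epsilon(a(v)-2)$ at every vertex $v$ with $a(v) \geq 3$, for a single common sign $\epsilon = \pm 1$.

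Each such Legendrian surgery presents $(Y,\xi)$ as the boundary of a Stein domain $(X,J)$ built with only $0$- and $2$-handles, and Gompf's formula then gives
\[
\theta(\xi) \;=\; \tfrac{1}{4}\bigl(c_1^2(X,J) - 2\chi(X) - 3\sigma(X)\bigr) + C,
\]
where $\chi(X)$, $\sigma(X)$, and the constant $C$ are the same for every choice of rotation vector. Writing $c_1(X,J) = \sum_v r(v)\,\mathrm{PD}[\Sigma_v]$ for the closed surfaces $\Sigma_v$ that cap off the Seifert disks at the vertices, I would compute $c_1^2(X,J) = r^T M^{-1} r$, where $M$ is the negative definite linking matrix of the plumbing. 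The proposition therefore reduces to the algebraic statement that $r^T M^{-1} r$ attains its unique minimum on the admissible box $\prod_v \{-(a(v)-2), \ldots, a(v)-2\}$ precisely at the two consistent extremes $r(v) = \pm(a(v)-2)$.

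The key ingredient, which is also the principal obstacle, is the strict positivity of every entry of $-M^{-1}$; this holds for the inverse of an irreducible M-matrix, which $-M$ is for a connected negative definite plumbing tree, and can alternatively be established by a direct cofactor expansion along the legs of the graph. Granted this, the chain of inequalities
\[
r^T(-M^{-1})r \;\leq\; \sum_{u,v}(-M^{-1})_{uv}|r(u)||r(v)| \;\leq\; \sum_{u,v}(-M^{-1})_{uv}(a(u)-2)(a(v)-2)
\]
yields the desired optimization: equality throughout forces $|r(v)| = a(v)-2$ at every $v$ with $a(v)\geq 3$ and, by strict positivity of the off-diagonal entries of $-M^{-1}$, forces these $r(v)$ to share a common sign. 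Any other admissible $r$ therefore produces a strictly smaller value of $r^T(-M^{-1})r$, hence a strictly larger (less negative) $c_1^2(X,J)$, and thus a strictly larger $\theta$. Vertices with $a(v)=2$ force $r(v)=0$ and so contribute nothing to the optimization, so they cause no issue in the argument.
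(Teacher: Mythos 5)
Your proposal is correct and follows essentially the same route as the paper: reduce the comparison of $\theta$-invariants to comparing $c_1^2=\mathbf{r}^T Q^{-1}\mathbf{r}$ over the box of admissible rotation vectors, and use the inverse-positivity of the $M$-matrix $-Q$ (the paper's Lemma on negative-definite matrices with nonnegative off-diagonal entries) to show the consistent extremes are the unique minimizers. The only cosmetic difference is that the paper packages the optimization as a strict-concavity argument reducing to the vertices of the box, whereas you run a direct entrywise inequality chain; both hinge on the same key fact about the entries of $Q^{-1}$.
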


This proposition is proven in Section~\ref{sec: compare}. 

\begin{remark}\label{canminimizes1}
The proof of Proposition~\ref{canminimizes}  also applies to lens spaces. 
\end{remark}

We now turn to the case of small Seifert fibered spaces with $e_0\geq 0$. Results of Wu \cite{Wu06} for $e_0>0$ and Ghiggini, Lisca, and Stipsicz \cite{GhigginiLiscaStipsicz06} for $e_0\geq 0$ show that any tight contact structure on $Y(e_0;r_1,r_2,r_3)$ with $e_0\geq 0$ can be obtained by a contact surgery on the top diagram in Figure~\ref{fig:e0gqe0}. 
In this diagram, the coefficients $s_i$ are determined as follows. One performs Rolfsen twists on the curves in Figure~\ref{fig:small}, until the horizontal curve has framing $0$.  With further Rolfsen twists, if necessary, one may assume not only that the horizontal curve remains framed $0$ but also the inequalities  $s_1>0$ and $1>s_2,s_3>0$ hold. 
\begin{figure}[htb]
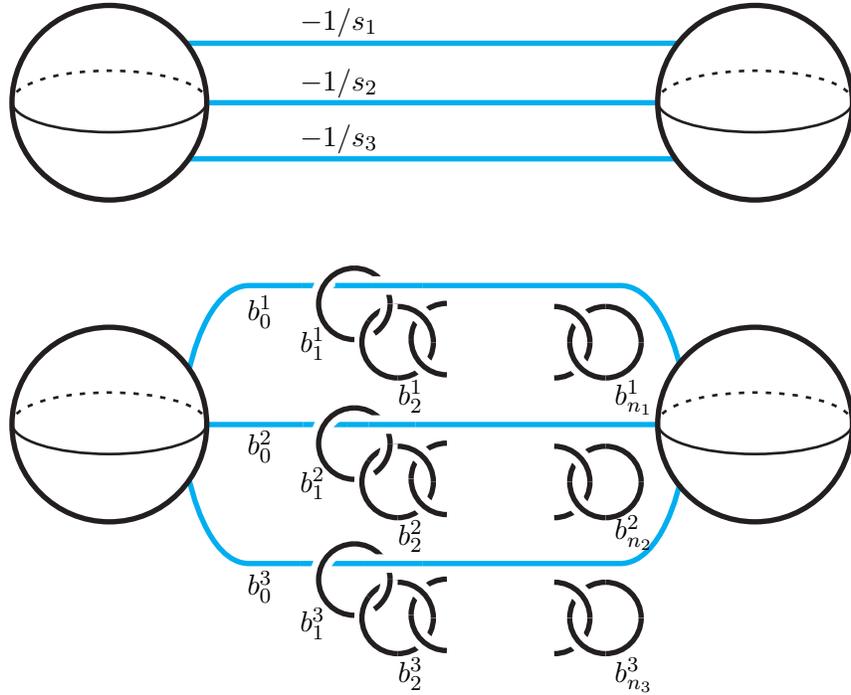
{
\begin{overpic}
{fig/e0geq0}
\put(110, 240){$-1/s_1$}
\put(110, 217){$-1/s_2$}
\put(110, 196){$-1/s_3$}
\put(90, 130){$b^1_{0}$}
\put(110, 118){$b^1_{1}$}
\put(147, 98){$b^1_{2}$}
\put(229, 98){$b^1_{n_1}$}
\put(90, 79){$b^2_{0}$}
\put(110, 65){$b^2_{1}$}
\put(147, 45){$b^2_{2}$}
\put(229, 47){$b^2_{n_2}$}
\put(90, 26){$b^3_{0}$}
\put(110, 12){$b^3_{1}$}
\put(147, -7){$b^3_{2}$}
\put(229, -7){$b^3_{n_3}$}
\end{overpic}}
\caption{Surgery diagrams for $Y(e_0;r_1,r_2,r_3)$ with $e_0\geq 0$ where the $s_i$ are some rational numbers, determined by the $r_i$ as discussed above.}
\label{fig:e0gqe0}
\end{figure}
We note that for $s_1\in(n,n+1]$, the surgery diagram will give a small Seifert fibered space with $e_0=n$. 

We may apply the slam dunk move to express the top diagram in Figure~\ref{fig:e0gqe0} in terms of integer surgeries on a chain of unknots attached to each of the blue curves. The resulting configuration is shown in the bottom diagram of the figure. For each $i=1,2,3$, the rational surgery coefficient satisfies $-1/s_i=[b^i_0,\ldots, b^i_{n_i}]$, where all  $b_j^i <-1$ except that $b_0^1< 0$. A contact surgery realizing the top diagram in Figure~\ref{fig:e0gqe0} is equivalent to a Legendrian surgery on the bottom diagram. 

We call one of the blue curves together with the chain of unknots linked to it a \dfn{leg} of the diagram.  A contact structure, as in Definition~\ref{m-consistent}, is called \dfn{consistent} if it is obtained from Legendrian surgery on a Legendrian realization of the bottom surgery diagram in Figure~\ref{fig:e0gqe0}, where all stabilizations on the knots have been performed with the same sign. We say that a contact structure is \dfn{mostly consistent} if it is obtained by Legendrian surgery on a Legendrian realization of the surgery diagram, where all of the stabilizations on each leg are consistent, but not all legs are stabilized the same. We emphasize that a mostly consistent contact structure is not consistent, by definition.

We are now ready to state our results for small Seifert fibered spaces with $e_0\geq 0$. 
\begin{theorem}\label{main3}
Suppose that  $e_0\geq 0$. If  $Y(e_0;r_1,r_2,r_3)$ with a contact structure admits a rational homology ball symplectic filling,  then the contact structure must be mostly consistent. 
\end{theorem}

This theorem and its corollary below will be proven in Section~\ref{pos}.

\begin{remark}
Although Theorem~\ref{main3} provides only a necessary condition, there are small Seifert fibered spaces \(Y(e_0;r_1,r_2,r_3)\) with \(e_0\ge 0\) that admit symplectic rational homology ball fillings. We construct such examples in Section~\ref{s1s2fillings}. See also \cite{EtnyreOzbagciTosun25pre}.   
\end{remark}


\begin{corollary}\label{possfillse0pos}
Suppose that  $e_0\geq 0$. Then, up to isomorphism,  there are at most three contact structures on $Y=Y(e_0;r_1,r_2,r_3)$ so that $(Y, \xi)$ admits a symplectic rational homology ball filling.
\end{corollary}

We now turn to the case of small Seifert fibered spaces with $e_0=-1$. While the tight contact structures on these manifolds are partially classified in \cite{GhigginiLiscaStipsicz2007, Matkovic2018}, much less is known about their symplectic rational homology ball fillings. In what follows, we rule out such fillings for all tight contact structures when the Seifert invariants satisfy certain constraints.  

\begin{theorem}\label{Lspace}
 Let $Y=Y(-1; r_1, r_2, r_3)$, where we assume that $1>r_1\geq r_2\geq r_3>0$,  without loss of generality.
Then there is no tight contact structure $\xi$ on $Y$ so that $(Y, \xi)$ is symplectically filled by a rational homology ball, provided that $r_1+r_2+r_3>1>r_1+r_2$.  
\end{theorem}

We will prove this result in Section~\ref{Lnegm1}.

\smallskip

On the other hand, when $Y(-1; r_1, r_2, r_3)$ does admit a contact structure with a symplectic rational homology ball filling it can admit many.

\begin{theorem}\label{arbitmany}
For any integer $n>0$, there is a small Seifert fibered space $Y=Y(-1; r_1, r_2, r_3)$ which admits at least $n$ distinct tight contact structures, so that $Y$ equipped with any of these contact structures is symplectically filled by a rational homology ball. 
\end{theorem}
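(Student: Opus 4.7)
\textbf{Proof proposal for Theorem~\ref{arbitmany}.} The plan is to construct, for each integer $n \geq 1$, an explicit small Seifert fibered space $Y_n = Y(-1; r_1, r_2, r_3)$ together with $n$ pairwise non-isotopic contact structures on it, each admitting a symplectic rational homology ball filling produced by the construction of Section~\ref{s1s2fillings}. The key source of multiplicity is the lack of a canonical normalization when $e_0 = -1$: the standard Seifert-invariant move $(e_0, \beta_i, \alpha_i) \mapsto (e_0+1, \beta_i - \alpha_i, \alpha_i)$, applied in any subset of the three indices, preserves the underlying smooth manifold but produces genuinely distinct (possibly unnormalized) presentations of $Y_n$ with Euler number $e_0' \geq 0$. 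This flexibility is unavailable when $e_0 \leq -2$, and it is precisely what makes the $e_0 = -1$ case richer than the cases covered by Theorems~\ref{main1}--\ref{main3}.

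\emph{The construction.} For $n$ large, I would choose the multiplicities $\alpha_1, \alpha_2, \alpha_3$ with sufficient arithmetic richness (for instance, large pairwise coprime integers, or integers tuned so that several distinct renormalizations produce surgery diagrams matching the families built in Section~\ref{s1s2fillings}). Each of at least $n$ renormalized presentations of $Y_n$ is then fed into the construction of Section~\ref{s1s2fillings}, which in the $e_0 \geq 0$ setting gives a symplectic rational homology ball filling $W_i$ of $(Y_n, \xi_i)$ for some contact structure $\xi_i$. A natural source of such examples is to couple the Lisca family $p/q \in \mathcal{O}$ with the renormalization moves: starting from many lens space fillings $L(p^2, pq-1)$, one can drill and fill along Seifert fibers to obtain the $Y_n$'s with the desired number of distinct presentations.

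\emph{Distinguishing the $\xi_i$.} Lemma~\ref{lem: rhb} forces $\theta(\xi_i) = -2$ for every $i$, so Gompf's $\theta$ invariant cannot separate them. Instead I would compute the spin$^c$ structure $\spinc(\xi_i)$, equivalently the first Chern class $c_1(\xi_i) \in H^2(Y_n; \Z)$, directly from each Legendrian surgery diagram using the standard formula (rotation numbers of the Legendrian realizations dotted against the surgery cocycles). Since the $n$ renormalizations change the framing data on different legs, the rotation-number contributions will differ in a controllable way, and a direct computation should show that the resulting spin$^c$ structures are pairwise distinct. As a backup invariant, one can use the Ozsv\'ath--Szab\'o contact invariant applied to the explicit Stein handle decompositions of the $W_i$.

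\emph{Main obstacle.} The hard part is Step~3: ensuring that the different Seifert presentations really do produce $n$ distinct contact structures on the \emph{same} manifold, rather than the same contact structure appearing in $n$ disguises. Overcoming this reduces to an arithmetic calculation showing that the map from renormalization data to $\spinc(Y_n)$ is sufficiently injective, and this is where the choice of $\alpha_i$'s with rich factorization is used — taking the torsion of $H^2(Y_n;\Z)$ large enough that pigeonholing is avoided. A secondary obstacle is verifying that each renormalized presentation actually satisfies the hypotheses of the Section~\ref{s1s2fillings} construction; handling this may require tailoring the $r_i$ to fit one of the $\mathcal{QHB}$ families listed in Figures~\ref{fig:plumbing1} and~\ref{fig:plumbing2} after renormalization.
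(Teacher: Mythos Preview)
Your proposal misses the actual mechanism and, as you yourself concede, leaves the decisive step unproved. The paper's argument is far more direct and does not use renormalization moves at all. In the cable construction of Lemma~\ref{cable}, one first stabilizes the Legendrian regular fiber $F$ exactly $k$ times to obtain $\widehat F$, and then takes the $\pm$-standard $(m,-h)$-cable $\widehat F^\pm_{(m,-h)}$. For fixed $p/q$, $m$, $h$, and $k$, the resulting surgered manifold is always the \emph{same} small Seifert fibered space, and when $k>0$ it has $e_0=-1$. The multiplicity comes from the $k+1$ possible rotation numbers of $\widehat F$ together with the two sign choices for the cable, giving $2(k+1)$ Legendrian knots in $S^1\times S^2$ with pairwise distinct rotation numbers. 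Legendrian surgery on these produces Stein rational homology balls, and Lisca--Mati\'c \cite{LiscaMatic97} immediately shows the induced contact structures are pairwise distinct. Taking $k$ large gives arbitrarily many.

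By contrast, your plan to generate many presentations via the Seifert move $(e_0,r_i)\mapsto(e_0+1,r_i-1)$ and feed each into Section~\ref{s1s2fillings} faces real obstacles. First, there are only finitely many such renormalizations of a fixed manifold, so even if each yielded a distinct contact structure you would not obtain \emph{arbitrarily} many from one $Y_n$ this way. Second, when you push the presentation to $e_0'\geq 0$, Corollary~\ref{possfillse0pos} caps the number of contact structures with rational homology ball fillings at four --- so the same manifold viewed with $e_0'\geq 0$ cannot acquire more fillable structures via that route. Third, the $\mathcal{QHB}$ families in Figures~\ref{fig:plumbing1}--\ref{fig:plumbing2} concern $e_0\leq -2$ and are irrelevant here. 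The missing idea is that the freedom lives not in the Seifert presentation but in the Legendrian realization of the attaching knot: many stabilization patterns for $F$ give many rotation numbers for the cable, and that is what distinguishes the resulting Stein structures.
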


This theorem will be established in Section~\ref{SFSonS1S2}.

\subsection{Brieskorn spheres}\label{brieskornS}
A {\em Brieskorn homology sphere} $\Sigma(p,q,r)$\footnote{Here we are only considering Brieskorn homology spheres with three singular fibers as our focus is on small Seifert fibered spaces. We also note that according to a conjecture due to Koll\'{a}r \cite{kollar} and independently Fintushel-Stern it is expected that no Brieskorn homology spheres with more singular fibers can smoothly bound an integral homology ball.}
is the link of the complex surface singularity $$\{z_1^p+z_2^q+z_3^r=0\} \subset \mathbb{C}^3$$ where $p,q,r\geq 2$ are pairwise relatively prime integers. 

It is well known that $\Sigma(p,q,r)$  is a small Seifert fibered space with singular fibers having multiplicities $p,q,r$ \cite{neumannraymond, savelievbook}. When written in normalized Seifert invariants, $\Sigma(p,q,r)$ has $e_0=-1$ or $-2$ \cite[Page~$12$]{MarkTosun22} or \cite[Section~$3.1$]{Tosun22}. 

Throughout this paper, we orient $\Sigma(p,q,r)$ as the link of the singularity $\{z_1^p+z_2^q+z_3^r=0\}$. If $\Sigma(p,q,r)$ is a Seifert fibered space with $e_0=-2$,  then it has $R=1$ \cite[Page~$2$]{NZ85} where $R$ is the homology cobordism invariant for Brieskorn homology spheres introduced by Fintushel and Stern. In particular, by \cite{FS:pseudofree} such a manifold cannot bound a smooth integral homology ball. On the other hand, many $\Sigma(p,q,r)$ (necessarily with $e_0=-1$) are known to bound smooth contractible 4-manifolds \cite{CH, Fickle}. 

We now provide evidence for Gompf’s Conjecture~\ref{con: gompf}  (see \cite{MarkTosun22, Tosun22} for extensive surveys and progress) with the following result.

\begin{theorem}\label{brieskorn}
For any positive  integer $n$ and $(p,q)\neq (2,3)$,  no Brieskorn homology sphere $\Sigma(p,q, pqn+1)$ bounds a symplectic rational homology ball.
\end{theorem}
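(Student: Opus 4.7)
The plan is to combine the necessary condition $\theta(\xi)=-2$ from Lemma \ref{lem: rhb} with the classification of tight contact structures on $\pm\Sigma(p,q,pqn+1)$ and an explicit control of $\theta$ from the plumbing description.

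First I would compute the normalized Seifert invariants. Using the equation $pq(pqn+1)(e_0+r_1+r_2+r_3)=\pm 1$ that characterizes Brieskorn sphere Seifert data, one finds that the link-of-singularity orientation realizes $\Sigma(p,q,pqn+1)$ as $Y(-1;r_1,r_2,r_3)$ with $r_1+r_2+r_3=1-\frac{1}{pq(pqn+1)}$, and the reversed orientation as $Y(-2;r_1',r_2',r_3')$. A symplectic rational homology ball filling would therefore produce a tight contact structure $\xi$ on one of these two small Seifert fibered spaces with $\theta(\xi)=-2$.

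For the $e_0=-2$ orientation, a direct inspection of the plumbing graphs in Figures \ref{fig:plumbing1}--\ref{fig:plumbing2} shows that $-\Sigma(p,q,pqn+1)\notin\mathcal{QHB}$. When this manifold is an $L$-space, Theorem \ref{main2} restricts any filled contact structure to be consistent or mostly consistent, and Proposition \ref{canminimizes} then gives $\theta(\xi_{can})\leq\theta(\xi)$ for every tight $\xi$. An explicit plumbing computation of $\theta(\xi_{can})$ yields the strict inequality $\theta(\xi_{can})>-2$, and hence by Lemma \ref{lem: rhb} no tight contact structure on $-\Sigma(p,q,pqn+1)$ admits a symplectic rational homology ball filling. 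In the non-$L$-space case the tight contact structures on $Y(-2;r_1',r_2',r_3')$ are still classified in the literature, and the same plumbing formula should bound $\theta$ uniformly away from $-2$.

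For the $e_0=-1$ orientation, Theorem \ref{Lspace} does not apply directly, since $r_1+r_2+r_3<1$. Here I would enumerate the tight contact structures on $Y(-1;r_1,r_2,r_3)$ and compute $\theta$ for each from its Legendrian surgery presentation. Since $\theta$ is an explicit rational function of the rotation numbers and Thurston--Bennequin invariants of the Legendrian unknots in each leg of the plumbing, a case analysis on the continued-fraction expansions of the $r_i$ rules out $\theta=-2$ whenever $(p,q)\neq (2,3)$, and Lemma \ref{lem: rhb} then completes the argument.

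The main obstacle, and the reason for excluding $(p,q)=(2,3)$, is precisely this last $\theta$-arithmetic step. For the family $\Sigma(2,3,6n+1)$ the continued-fraction expansions of the $r_i$ are short enough that the value $\theta=-2$ cannot be ruled out by the numerical bounds coming from the plumbing formula, whereas for all strictly larger coprime pairs $(p,q)$ the resulting inequality is strict and the obstruction goes through.
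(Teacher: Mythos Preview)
Your approach has a genuine gap in the $e_0=-1$ case, which is the only orientation actually at stake: $\Sigma(p,q,pqn+1)$ with its link orientation is $Y(-1;r_1,r_2,r_3)$, and the paper's proof treats only this orientation, so your $e_0=-2$ discussion is not needed.

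The problem is your proposed enumeration of tight contact structures on $Y(-1;r_1,r_2,r_3)$ from a Legendrian plumbing presentation. For $e_0=-1$ there is no such classification: the central vertex has weight $-1$, so it cannot be realized by Legendrian surgery on an unknot, the plumbing is not negative definite, and the paper emphasizes that for $e_0=-1$ ``much less is known'' about the contact geometry. Your plan to ``enumerate the tight contact structures \ldots\ and compute $\theta$ for each from its Legendrian surgery presentation'' therefore has no available input to run on, and the vague ``case analysis on the continued-fraction expansions'' cannot substitute for a classification.

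The paper avoids this entirely by changing the surgery description. It identifies $\Sigma(p,q,pqn+1)$ with $S^3_{T_{p,q}}(-1/n)$ and invokes Theorem~\ref{EMT}, which classifies the \emph{fillable} contact structures on negative surgeries on torus knots via a single Legendrian chain diagram on $T_{p,q}$. Proposition~\ref{rednumb} then uses the Christian--Menke splitting theorem to rule out every inconsistent contact structure (any mixed torus forces the $e$-splitting into pieces that cannot assemble into a rational homology ball), leaving only the consistent structure $\xi^{p,q}_{can}$. A single explicit computation from the torus-knot diagram gives $\theta(\xi^{p,q}_{can})=-n(pq-p-q)^2+n-2$, which equals $-2$ only when $(p,q)=(2,3)$. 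So the two essential ingredients you are missing are the torus-knot surgery classification (replacing the unavailable $e_0=-1$ plumbing classification) and the splitting argument that reduces the $\theta$-computation to one contact structure rather than a family.
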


This theorem and the next one are proven in Section~\ref{surgeryontorus}. The full version of Gompf’s conjecture for Brieskorn homology spheres with their standard orientation was announced in \cite{AlfieriCavallo24pre} using different techniques. 

Note that Theorem~\ref{brieskorn} follows directly from Theorem~\ref{nonLspace},  since $\Sigma(p,q,pqn+1)$ can be  obtained from $S^3$ by $-1/n$-Dehn surgery on the positive $(p,q)$-torus knot \cite[Examples~$1.4$ and $1.5$]{savelievbook}.

\begin{theorem}\label{nonLspace}
Let $p, q$ be relatively prime positive integers and $r$ be any  integer less than $-1$ if $(p,q)=(2,3)$, and otherwise a negative integer or a rational number of the form $-\frac{1}{n}$ for some positive integer $n$. 
Then $S^3_{T_{p,q}}(r)$, the result of $r$-Dehn surgery on the positive torus knot $T_{p,q}$,  does not bound a symplectic rational homology ball.
\end{theorem}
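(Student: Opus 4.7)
The plan is to realize $S^3_{T_{p,q}}(r)$ as a small Seifert fibered space $Y(e_0;r_1,r_2,r_3)$ and then invoke whichever of Theorems~\ref{main1}, \ref{main2}, or~\ref{Lspace} applies based on the value of $e_0$. By Moser's classification of Dehn surgeries on torus knots, whenever $r=\alpha/\beta$ in lowest terms with $r\ne pq$, the manifold $S^3_{T_{p,q}}(r)$ is a small Seifert fibered space with three exceptional fibers of multiplicities $p$, $q$, and $|\alpha-pq\beta|$; in particular $S^3_{T_{p,q}}(-1/n)\cong \Sigma(p,q,pqn+1)$, and $S^3_{T_{p,q}}(-m)$ has third multiplicity $pq+m$. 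A continued-fraction Kirby calculus manipulation then puts this into normalized form $Y(e_0;r_1,r_2,r_3)$ with $r_i\in(0,1)$, and a direct inspection for the negative slopes in the hypothesis shows that $e_0\in\{-1,-2\}$ or $e_0\le -3$.

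In the cases $e_0\le -3$ and $e_0=-2$ I would invoke Theorem~\ref{main1} and Theorem~\ref{main2} respectively (the $L$-space condition in the latter can be verified directly from the surgery presentation). Either theorem shows that a symplectic rational homology ball filling would force the manifold into $\mathcal{QHB}$ with its canonical contact structure. The task then reduces to verifying that the plumbing graph of the torus knot surgery---whose three legs are the negative continued fraction expansions of $p/p^{*}$, $q/q^{*}$, and the third multiplicity over a suitable numerator---is not among the ten families in Figures~\ref{fig:plumbing1} and~\ref{fig:plumbing2}. Because each $\mathcal{QHB}$-graph has rigid numerical structure (a prescribed central weight together with a single non-$(-2)$ interior weight appearing in each leg at a fixed position), a finite case analysis rules out matching, leaving exactly the excluded small $\{p,q\}=\{2,3\}$ slopes.

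In the case $e_0=-1$ I would apply Theorem~\ref{Lspace}. With $1>r_1\ge r_2\ge r_3>0$, the conditions $r_1+r_2+r_3>1$ and $r_1+r_2<1$ must both be checked. The first follows from the orbifold Euler number identity $e_0+r_1+r_2+r_3=\pm 1/(pq\,|\alpha-pq\beta|)$ combined with choosing the orientation that makes this quantity positive; the opposite orientation presents as $e_0=-2$ and is covered by the preceding case. The second follows from the bound $r_1+r_2\le 1/p+1/q$, which is strictly less than $1$ whenever $\{p,q\}\ne\{2,3\}$, while for the integer case $r=-m$ with $m\ge 2$ the weaker estimate $1/2+1/3=5/6<1$ suffices uniformly, accounting for the looser hypothesis in the $(p,q)=(2,3)$ case of the theorem.

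The main obstacle is the careful orientation and normalization bookkeeping: the same surgery $S^3_{T_{p,q}}(r)$ can be presented as either $Y(-1;\ldots)$ or $Y(-2;\ldots)$ depending on the chosen orientation, so one must consistently match each orientation to the theorem under which it is ruled out, ensuring that the hypothesis on $r$ excludes exactly the cases where neither $\mathcal{QHB}$-avoidance nor the sum inequalities of Theorem~\ref{Lspace} can be established. A secondary technical difficulty is the explicit plumbing graph comparison against the ten families comprising $\mathcal{QHB}$, which amounts to a delicate but finite case analysis parameterized by the torus knot data $(p,q,r)$.
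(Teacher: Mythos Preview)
Your plan has a fundamental gap: the manifolds in question fall outside the scope of every theorem you invoke. As the paper records just before Theorem~\ref{EMT}, for $r<0$ the surgery $S^3_{T_{p,q}}(r)$ is the small Seifert fibered space $Y\bigl(-1;\tfrac{p-q^*}{p},\tfrac{q-p^*}{q},\tfrac{1}{pq-r}\bigr)$, so always $e_0=-1$; moreover the sum of the three Seifert invariants is strictly less than $1$, and by the Lisca--Stipsicz criterion these manifolds are \emph{not} $L$-spaces. This kills two of your three branches outright: Theorem~\ref{main1} never applies (wrong $e_0$), and reversing orientation to reach an $e_0=-2$ presentation both changes the oriented manifold you are supposed to obstruct and still fails the $L$-space hypothesis of Theorem~\ref{main2}, since being an $L$-space is orientation-independent.

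For the remaining branch you propose Theorem~\ref{Lspace}, but its hypothesis $r_1+r_2+r_3>1$ fails for exactly the reason above. Your fix---``choose the orientation that makes this quantity positive''---is not available: the statement concerns a specific oriented $3$-manifold, and obstructing $-Y$ says nothing about $Y$. Separately, your inequality $r_1+r_2\le 1/p+1/q$ is false in general; e.g.\ for $\Sigma(2,5,11)=S^3_{T_{2,5}}(-1)$ one has $r_1+r_2=\tfrac12+\tfrac25=\tfrac{9}{10}>\tfrac12+\tfrac15$.

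The paper proceeds differently precisely because none of the earlier structural theorems cover this regime. Since these non-$L$-space $e_0=-1$ manifolds lie outside the Wu/Ghiggini classification range, the argument first invokes the Etnyre--Min--Tosun classification (Theorem~\ref{EMT}) to enumerate all fillable contact structures as Legendrian surgeries on the link of Figure~\ref{EMT1}. Proposition~\ref{rednumb} then uses the Christian--Menke splitting to eliminate every inconsistent structure, reducing to the single consistent structure $\xi^{p,q}_{can}$. The proof concludes with an explicit computation of $\theta(\xi^{p,q}_{can})$ from the intersection form of the Stein handlebody $X_{p,q,r}$, showing that $\theta=-2$ forces $(p,q)=(2,3)$ (and $r=-1$ in the integer case); Lemma~\ref{lem: rhb} then rules out a symplectic rational homology ball filling in all remaining cases.
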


\begin{remark}
As we will see during the proof of Theorem~\ref{nonLspace}, our arguments cannot rule out a symplectic rational homology ball filling of the Brieskorn sphere $\Sigma(2,3,6n+1)$. We note that for any non-negative integer $k$,  the Brieskorn sphere $\Sigma(2, 3, 6(2k+1)+1)$ does not bound a smooth integer homology ball since its Rokhlin invariant is not zero. On the other hand, the complete answer for the infinite family  $\Sigma(2,3,6(2k)+1)$  is not known, although some members (e.g. $\Sigma(2,3,13)$ and $\Sigma(2,3,25)$) of this infinite family do bound smooth contractible 4-manifolds. See \cite[Theorem~1.7]{MarkTosunT18} and \cite{Tosun20} for some partial negative results.
\end{remark}

The authors have not been able to find any $Y(-1;r_1,r_2,r_3)$ (or $Y(-2;r_1,r_2,r_3)$) that are not $L$-spaces but yet admit a symplectic rational homology ball filling. So, we end this section by asking the following curious question,  for which a negative answer will not just resolve Gompf's conjecture in full generality but also provide a deeper explanation for it. 

\begin{question}
Is there a small Seifert fibered space that is not an $L$-space but which has a symplectic rational homology ball filling? 
\end{question}

\subsection{Spherical $3$-manifolds}
A closed, orientable 3-manifold is called spherical if it admits a complete
metric of constant curvature $+1$. Equivalently, a spherical $3$-manifold is  the quotient manifold of the form $S^3/G$, where $G$ is a finite subgroup of $SO(4)$ acting freely by rotations on $S^3$. Note that the fundamental group of the spherical $3$-manifold $S^3/G$ is isomorphic to $G$ and conversely, by Perelman's elliptization theorem, any closed orientable prime $3$-manifold with finite fundamental group is spherical. The family of spherical $3$-manifolds are categorized into five types with respect to their
fundamental groups: {\bf C} (cyclic), {\bf D} (dihedral), {\bf T} (tetrahedral), {\bf O} (octahedral) and {\bf I} (icosahedral).   The lens spaces are precisely the class of spherical $3$-manifolds of type  {\bf C}. 

The family of spherical $3$-manifolds can be identified with the homeomorphism types of the links of quotient surface singularities. The  lens spaces are precisely the links of cyclic quotient surface singularities.  It follows that every spherical $3$-manifold, viewed as the {\em oriented} link of a normal surface singularity, has a canonical (a.k.a Milnor fillable) contact structure $\xi_{can}$, as we discussed in Section~\ref{subsec: fillsfs}. Recall that $\mathcal{O}$ in the theorem below is defined by Equation~(\ref{eq: O}).

\begin{theorem}\label{thm: spherical} Suppose that  $\xi$ is a contact structure on a spherical $3$-manifold $Y$, oriented as the link of the corresponding quotient surface singularity. If $(Y, \xi)$ admits a symplectic rational homology ball  filling,  then $Y$ is orientation-preserving diffeomorphic to a lens space $L(p,q)$ with $p/q \in \mathcal{O}$, and $\xi$ is contactomorphic  to $\xi_{can}$. 
\end{theorem}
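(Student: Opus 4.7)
The plan is to reduce Theorem~\ref{thm: spherical} to the lens space characterization recalled in the introduction together with Theorems~\ref{main1} and~\ref{main2}. The only genuinely new content is the exclusion of symplectic rational homology ball fillings on spherical $3$-manifolds with non-cyclic fundamental group (types D, T, O, I), so I focus on that case.

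First I would observe that any such $Y$ is a small Seifert fibered space $Y(e_0;r_1,r_2,r_3)$ with exactly three exceptional fibers, and is an $L$-space since it carries an elliptic geometric structure (a theorem of Ozsv\'ath--Szab\'o). Oriented as the link of the associated quotient surface singularity, its minimal good resolution has negative definite intersection form, and a direct inspection of the Seifert invariants coming from the acting group shows $e_0\leq -2$. Thus either Theorem~\ref{main1} (when $e_0\leq -3$) or Theorem~\ref{main2} (when $e_0=-2$) applies to $Y$.

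The key combinatorial step is to verify that no non-cyclic spherical $3$-manifold lies in $\mathcal{QHB}$. Using the classical classification of finite subgroups of $U(2)$ acting freely on $S^3$ together with Brieskorn's computation of the resolution graphs of the corresponding quotient singularities, the minimal good resolution graphs for types D, T, O, and I are rigid star-shaped trees whose leg data is determined arithmetically by the group. I would inspect each such family against the ten plumbing families of Figures~\ref{fig:plumbing1} and~\ref{fig:plumbing2} and verify that no match occurs. Granted this, Theorem~\ref{main1} handles the case $e_0\leq -3$, and the $\xi_{can}$-filling characterization recalled in the introduction handles $\xi=\xi_{can}$ when $e_0=-2$. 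It remains to rule out, when $e_0=-2$, contact structures $\xi$ that are consistent or mostly consistent but not contactomorphic to $\pm\xi_{can}$. If such a $\xi$ admitted a symplectic rational homology ball filling, then by Lemma~\ref{lem: rhb} we would have $\theta(\xi)=-2$, while Proposition~\ref{canminimizes} would give $\theta(\xi_{can})<\theta(\xi)=-2$, contradicting the universal lower bound $\theta\geq -2$ for Gompf's invariant. Combined with Lisca's lens space results this completes the proof.

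The main obstacle is the combinatorial verification that no non-cyclic spherical $3$-manifold lies in $\mathcal{QHB}$: Brieskorn's classification makes all the relevant resolution graphs explicit, but the ten $\mathcal{QHB}$ families carry free integer parameters $p,q,r$ that could in principle coincide numerically with the small invariants of the D/T/O/I singularities, so one must carefully compare leg lengths, central weights, and all self-intersection numbers across the families to exclude every potential match.
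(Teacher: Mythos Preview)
Your proposal has a genuine gap. The claim of a ``universal lower bound $\theta\geq -2$ for Gompf's invariant'' is simply false: the $\theta$-invariant can take arbitrarily negative values (for instance, the computations in the proof of Theorem~\ref{nonLspace} give $\theta(\xi_{can})=-n(pq-p-q)^2+n-2$, which is well below $-2$ for $(p,q)\neq(2,3)$). This breaks your argument for the $e_0=-2$ case: once you know $Y\notin\mathcal{QHB}$, Theorem~\ref{main2} only tells you that a fillable $\xi$ must be consistent or mostly consistent, and you correctly handle $\xi=\pm\xi_{can}$, but your exclusion of the mostly consistent structures relies entirely on this nonexistent bound. Knowing $\theta(\xi_{can})<-2$ is no contradiction at all.

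The paper takes a rather different route that avoids this difficulty. Instead of appealing to Theorems~\ref{main1} and~\ref{main2} and then checking membership in $\mathcal{QHB}$, it first invokes the Choe--Park classification (Theorem~\ref{thm: ball}) to reduce to the short list of spherical manifolds that \emph{smoothly} bound rational homology balls: certain $L(p,q)$, certain $D(p,q)$, and $T_3$, $T_{27}$, $I_{49}$. For each non-cyclic family it then \emph{computes $\theta(\xi_{can})$ explicitly} and shows it is strictly greater than $-2$ (Lemma~\ref{lem: min} for $D(p,q)$, Lemma~\ref{lem: item 3} for the three sporadic cases). Combined with Proposition~\ref{canminimizes}, this gives $\theta(\xi)>-2$ for \emph{every} tight $\xi$, so Lemma~\ref{lem: rhb} rules out all of them at once. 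The crucial point is that the inequality goes the opposite way from what you assumed: one shows $\theta(\xi_{can})>-2$, not $\theta(\xi_{can})\geq -2$ as a general fact. To salvage your approach you would need an independent argument that $\theta(\xi_{can})>-2$ for the relevant $e_0=-2$ spherical manifolds, which is essentially what the paper's direct computation provides.
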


We will prove this theorem in Section~\ref{sphericalsec}. 

\begin{remark} If the surface singularity is cyclic quotient, then its oriented link $Y$ is a lens space and the result in Theorem~\ref{thm: spherical} for this case was already proven \cite{ChristianLi23, EtnyreRoy21, EtnyreTosun2023, GollaStarkston2022, Lisca2007} as we mentioned above.  Here we give yet another proof of this fact using a different line of argument. 
\end{remark}
It is interesting to note that there are spherical $3$-manifolds, other than some lens spaces, which smoothly bound rational homology balls.
\begin{theorem}[Choe and Park 2021, \cite{ChoePark2021}] \label{thm: ball}  A  spherical $3$-manifold $Y$ bounds a smooth rational homology ball if and only if $Y$ or $-Y$ is homeomorphic to one of the following manifolds: 
\begin{enumerate}
\item $L(p, q)$ such that $p/q \in\mathcal{R}$,
\item $D(p, q)$ such that $(p-q)/q' \in\mathcal{R}$,
\item $T_3$, $T_{27}$ and $I_{49}$, 
\end{enumerate}
where $p$ and $q$ are relatively prime integers such that $0 < q < p$, and $0 < q' < p - q$ is
the reduction of $q$ modulo $p - q$. 
\end{theorem}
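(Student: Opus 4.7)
The plan is to attack this in two directions: construct explicit smooth rational homology ball fillings for the three listed families, and obstruct such fillings for every other spherical $3$-manifold. The key tool for the obstructions is Donaldson's diagonalization theorem, applied via the standard trick: every spherical $3$-manifold $Y$ is the oriented link of a quotient surface singularity and thus bounds a canonical negative definite star-shaped plumbing $P$. If $W$ is a smooth rational homology ball with $\partial W = Y$, then $X := (-P) \cup_Y W$ is a smooth closed oriented $4$-manifold whose rational intersection form agrees with that of $P$, hence is negative definite; Donaldson's theorem then forces the intersection lattice of $P$ to embed isometrically as a full-rank sublattice of the standard diagonal lattice $-\Z^n$.

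For the ``if'' direction I would exhibit the rational homology balls explicitly. Lisca's original $\mathcal{R}$-family of lens space fillings is constructed via a standard single $1$-handle/single $2$-handle Kirby diagram. For prism manifolds $D(p,q)$ with $(p-q)/q' \in \mathcal{R}$, a similar but more elaborate handle decomposition must be produced; one approach is to realize $D(p,q)$ as rational surgery along an appropriate Montesinos or two-bridge link and then construct the filling by a generalized rational blowdown move built on top of Lisca's lens-space fillings. For the three sporadic examples $T_3, T_{27}, I_{49}$, one writes down explicit Kirby diagrams directly and verifies, by Rolfsen twists and handle cancellations, that the complementary $4$-manifold reduces to the standard $4$-ball.

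The ``only if'' direction is where the real work lies. I would proceed type by type through \textbf{C}, \textbf{D}, \textbf{T}, \textbf{O}, \textbf{I}. For each type, the homeomorphism class of $Y = S^3/G$ is determined by a small number of integer parameters, and the corresponding star-shaped plumbing graph is read off from the Seifert invariants by continued fraction expansion. A preliminary easy obstruction is that $|H_1(Y;\Z)| = |\det P|$ must be a perfect square, which rules out many parameter values. For the remaining candidates I would run a Lisca-style lattice embedding analysis: parametrize all possible isometric embeddings of the plumbing lattice into $-\Z^n$, exploit the star-shaped structure to reduce each leg to a short list of combinatorial patterns controlled by its continued fraction, and then translate the resulting linear-algebraic constraints back into the language of Seifert invariants.

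The principal obstacle, and what makes this a genuine advance beyond the lens space case, is precisely this combinatorial enumeration for the non-cyclic types. Type \textbf{D} introduces an extra short leg of length two that forces the embedding problem to bifurcate into several sub-cases absent in Lisca's analysis, and translating the resulting system of inequalities into the clean condition ``$(p-q)/q' \in \mathcal{R}$'' requires a nontrivial reindexing of continued fractions. For types \textbf{T}, \textbf{O}, \textbf{I} the fundamental groups have bounded order given the parameters, so only finitely many candidates survive the square-determinant test, but the lattice embeddings must be checked case by case, and one must then separately verify that of those candidates only $T_3, T_{27}, I_{49}$ actually admit fillings. I expect that for borderline cases in which a lattice embedding exists but no smooth filling does, Donaldson must be supplemented by Heegaard Floer correction term ($d$-invariant) obstructions; marrying these two families of obstructions uniformly across all five types is the hardest technical step.
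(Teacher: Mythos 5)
First, a point of comparison that matters here: the paper does not prove Theorem~\ref{thm: ball} at all. It is quoted verbatim from Choe and Park \cite{ChoePark2021} as an external input, so there is no internal proof to measure your attempt against. The only proof-adjacent content in the paper is Remark~\ref{rem: cond}, which explains where the condition $(p-q)/q' \in \mathcal{R}$ in item (2) comes from: by Lecuona \cite{Lecuona2019}, the prism manifold $D(p,q)$ is rational homology cobordant to the lens space $L(p-q,q)$, and since bounding a rational homology ball is a rational homology cobordism invariant, the $\mathbf{D}$-type case reduces entirely to Lisca's lens space theorem \cite{Lisca2007}. This is a genuinely different and much cheaper route than the one you propose for type $\mathbf{D}$, where you plan to redo a Lisca-style lattice embedding analysis from scratch for the star-shaped graph with the extra $(-2)$-leg; the cobordism reduction avoids that bifurcating case analysis completely.

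As a proof, your proposal has a genuine gap: it is a research program rather than an argument. The Donaldson set-up is standard and correct in spirit (though note an orientation slip: to get a \emph{negative} definite closed manifold whose form contains the plumbing lattice you should glue $P$ to the reversed rational homology ball, i.e.\ form $P \cup_Y \overline{W}$, not $(-P)\cup_Y W$), and the square-order constraint on $|H_1(Y;\Z)|$ is a valid preliminary filter. But every step where the theorem's actual content lives is deferred: the enumeration of lattice embeddings for types $\mathbf{D}$, $\mathbf{T}$, $\mathbf{O}$, $\mathbf{I}$; the translation of the resulting constraints into the condition $(p-q)/q'\in\mathcal{R}$; the explicit construction of fillings for $D(p,q)$ and for $T_3$, $T_{27}$, $I_{49}$; and the supplementary $d$-invariant obstructions you anticipate needing for cases that pass the lattice test but bound nothing. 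You correctly identify these as the hard steps, but identifying them is not carrying them out, so the proposal cannot be accepted as a proof of the statement. If you want to incorporate this result into the paper's framework honestly, the right move is what the authors do: cite \cite{ChoePark2021}, and record the cobordism reduction of Remark~\ref{rem: cond} for the $\mathbf{D}$-type condition.
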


We refer to Section~\ref{sphericalsec}, for the description of the spherical $3$-manifolds $D(p,q)$, $T_3$, $T_{27}$ and $I_{49}$ as plumbing graphs.

\begin{remark}\label{rem: notTrue}
The conclusion of Theorem~\ref{thm: spherical} does not necessarily remain valid for a spherical 
$3$--manifold when it is equipped with the orientation opposite to the canonical one 
that arises when the manifold is viewed as a singularity link. For instance,
\[
-T_{3} \;=\; Y(-1;\tfrac{2}{3},\,\tfrac{1}{2},\,\tfrac{1}{3})
\]
admits two non--isotopic tight contact structures, both of which bound symplectic 
rational homology balls, as illustrated in Figure~\ref{fig:mt3}. By contrast, neither
\[
-T_{27} \;=\; Y(3;\tfrac{2}{3},\,\tfrac{1}{2},\,\tfrac{1}{3}) 
\quad \text{nor} \quad 
-I_{49} \;=\; Y(0;\tfrac{4}{5},\,\tfrac{1}{2},\,\tfrac{1}{3})
\]
admit symplectic rational homology ball fillings. We establish these results as part 
of a more general framework in the sequel \cite{EtnyreOzbagciTosun25pre}.
\end{remark} 

\begin{figure}[htb]{
\begin{overpic}
{fig/mt3}
\put(-24, 180){$-3/2$}
\put(105, 180){$-3$}
\put(68, 107){$-2$}
\put(11, 130){$-1$}
\put(232, 180){$-3$}
\put(195, 105){$-2$}
\put(127, 130){$-1/3$}
\put(319, 150){$3$}
\put(268, 125){$0$}
\put(372, 130){$1$}
\end{overpic}}
\caption{Top left: $-T_{3} = Y(-1;\tfrac{2}{3},\tfrac{1}{2},\tfrac{1}{3})$. 
Moving to the right, we perform a slam dunk followed by three Rolfsen twists. 
In the bottom diagram, the blue curve represents a Legendrian knot on the 
boundary of $S^{1} \times D^{3}$ with $tb=-3$. With one additional stabilization, 
for which there are two natural choices, this diagram realizes Stein rational 
homology ball fillings corresponding to two non--isotopic contact structures on 
the boundary $-T_{3}$. 
(Note that in passing from the top right diagram to the bottom diagram, 
we replace the $0$--framed unknot with a $1$--handle and push the overstrand 
to an understrand, a standard Kirby calculus move that changes the smooth 
framing by $-6$.)}
\label{fig:mt3}
\end{figure}

\subsection{Lagrangian fillings of Legendrian knots in Seifert fibered spaces}

Our discussion  above for small Seifert fibered spaces that bound symplectic rational homology balls has an interesting consequence. Recall that it has been of interest \cite{HaydenSabloff2015}  to determine when a Legendrian knot in a contact 3–manifold bounds a Lagrangian disk (or more generally, a Lagrangian surface) in a symplectic filling. We emphasize that whether or not this can occur depends strongly on the choice of symplectic filling.

\begin{theorem}\label{lagslice}
Let $Y=Y(e_0;r_1,r_2,r_3)$ be a small Seifert fibered space with $e_0\leq -3$ such that  $(Y, \xi_{can})$ admits a symplectic rational homology ball filling. We know that $(Y, \xi_{can})$ also has a symplectic filling $(X,\omega)$ obtained by adding Weinstein $2$-handles to $B^4$ along a Legendrian realization of the diagram in Figure~\ref{fig:plumbing} where all the Legendrian unknots have been stabilized consistently (that is, all on the left, or all on the right). Let $L$ be a belt sphere for one of the $2$-handles in $X$. By construction, $L$ bounds a Lagrangian disk in $(X,\omega)$. However, the Legendrian knot $L$ cannot bound a Lagrangian disk in the symplectic rational homology ball filling. 
\end{theorem}
We prove this result at the end of Section~\ref{Mneg}.
\begin{remark}
When $e_0>-3$, we construct many examples of contact structures that admit symplectic rational homology ball fillings. They can also be obtained by attaching Weinstein $2$-handles to $B^4$ along a Legendrian realization of the link in Figure~\ref{fig:plumbing} or to $S^1\times D^3$ along a Legendrian realization of the link in Figure~\ref{fig:e0gqe0}. The belt spheres to these $2$-handles are Legendrian knots that bound Lagrangian disks in the $2$-handles. However, as in the proof of Theorem~\ref{lagslice}, these Legendrian knots do not bound Lagrangian disks in their corresponding symplectic rational homology ball fillings. 
\end{remark}

\subsection{Outline} 
Most of our theorems follow from two main techniques. The first is a result of Christian and Menke which provides a method  to split symplectic fillings of a contact manifold under certain conditions. The second involves computations and estimates on the $\theta$-invariant of a contact structure. 

In Section~\ref{bg}, we recall the necessary contact geometry background for Christian and Menke's result before stating it. We also review the definition of the $\theta$-invariant together with some of its basic properties. Section~\ref{sec: compare}  is devoted to establishing the estimate on the  $\theta$-invariant given in Proposition~\ref{canminimizes}.

In Section~\ref{SFSonS1S2}, we discuss Seifert fibered structures on $S^1\times S^2$, construct symplectic rational homology ball fillings of some Seifert fibered spaces, and prove  Theorem~\ref{arbitmany}. We then proceed to the proofs of our main theorems: Section~\ref{Mneg} covers the case $e_0 \leq -3$ (proving Theorem~\ref{main1}), Section~\ref{Lnegm2}  treats the case $e_0 \leq -2$ (proving Theorem~\ref{main2} and Corollary~\ref{possiblefillingsfore0m2}), and Section~\ref{pos} handles the case $e_0 \geq 0$ (proving Theorem~\ref{main3}  and Corollary~\ref{possfillse0pos}).

Section~\ref{Lnegm1} is devoted to Seifert fibered spaces with $e_0=-1$, where we prove Theorem~\ref{Lspace}, as well as to Brieskorn spheres, where we establish  Theorem~\ref{brieskorn} and Theorem~\ref{nonLspace}. Finally, in Section~\ref{sphericalsec}, we prove Theorem~\ref{thm: spherical}, which characterizes canonically oriented spherical $3$-manifolds that admit symplectic rational homology ball fillings.

\vspace{.2in}
\noindent
{\bf Acknowledgments:} We thank Jon Simone for helpful communications. We also thank the anonymous referee for providing valuable comments that improved the paper. The first author was partially supported by National Science Foundation grant DMS-2203312 and the Georgia Institute of Technology’s Elaine M. Hubbard Distinguished Faculty Award. The third author was supported in part by grants from National Science Foundation (DMS-2105525 and CAREER DMS 2144363) and the Simons Foundation (636841, BT and 2023 Simons Fellowship). He also acknowledges the support by the Charles Simonyi Endowment at the Institute for Advanced Study.

\section{Background and preliminary results}\label{bg}

In this section, we collect various preliminary results. We begin by discussing the classification of contact structures on solid tori and lens spaces in Section~\ref{ctstronsolidtori}. In the following two sections we recall various facts about smooth and contact surgeries respectively. In Section~\ref{ctstronsolidtori} we discuss a theorem of Christian and Menke that shows that a symplectic filling of a contact manifold can sometimes be ``decomposed" if there is a ``mixed" torus in the contact manifold. In the final subsection we recall the definition of the $\theta$-invariant and its relation to symplectic rational homology balls. 

\subsection{Contact structures on solid tori and lens spaces}\label{ctstronsolidtori}
We recall the classification of tight contact structure on a solid torus. For this we need to use the Farey graph to describe curves on a surface and the notion of convex surfaces. We refer to \cite{EtnyreRoy21} for these well-known concepts, as the conventions we use here are used there (except when drawing a portion of the Farey graph as in Figure~\ref{fig:cfb} moving left to right corresponds to moving clockwise in the Farey graph, where in \cite{EtnyreRoy21} it was anti-clockwise). 

We begin by describing notation for paths in the Farey graph. Let $s$ and $t$ be two vertices in the Farey graph that have an edge between them. To be specific, we take $t$ to be anti-clockwise of $s$. The sequence of points $v_0=s$, $v_1=s\oplus t$, $v_2=v_1\oplus t=s\oplus 2t, \ldots, v_k=s\oplus kt$ is called a \dfn{continued fraction block}. (Here $s\oplus t$ is the Farey sum of the two rational numbers, which just means that the numerator of the new fraction is the sum of the numerators of the summands and similarly for the denominator.) We call $s$ the \dfn{start} of the continued fraction block and $t$ the \dfn{target}. See Figure~\ref{fig:cfb}. 
\begin{figure}[htb]
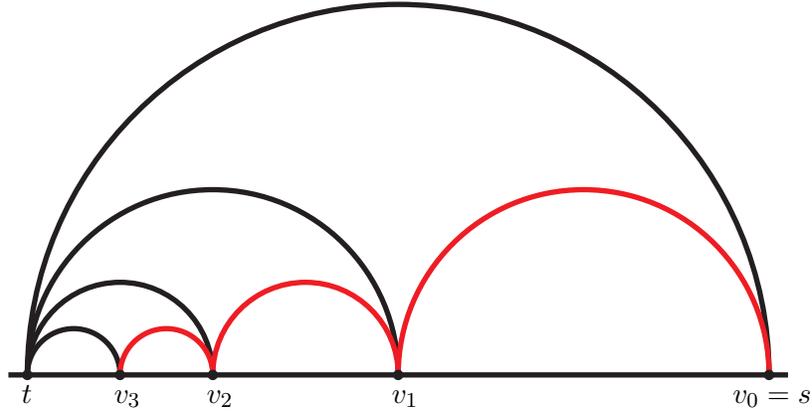
{
\begin{overpic}
{fig/cfb}
\put(273, 3){$v_0=s$}
\put(144, 3){$v_1$}
\put(72, 3){$v_2$}
\put(39, 3){$v_3$}
\put(3, 3){$t$}
\end{overpic}}
\caption{A continued fraction block. The Farey graph is usually drawn on the unit disk in $\R^2$, but for convenience, we put the vertices of the graph on $\R$ and moving left to right on $\R$ is going clockwise on the boundary of the unit disk. }
\label{fig:cfb}
\end{figure}
We note that each pair of consecutive points in the continued fraction block are connected by an edge and the union of all the edges between adjacent points forms a shortest path from $s$ to $s\oplus kt$ in the Farey graph. To describe more general shortest paths in the Farey graph, we need to describe notation for adjacent continued fraction blocks. Given the continued fraction block above, let $s'=v_k=s\oplus kt$, $t_1=s\oplus (k+1)t$, and $t_l=t_1\oplus (l-1) s'$. See Figure~\ref{fig:ndown}.
\begin{figure}[htb]{
\begin{overpic}
{fig/nextdown}
\put(-48, 2){$t_1=s\oplus (k+1)t$}
\put(145, 2){$t_2$}
\put(215, 2){$t_3$}
\put(248, 2){$t_4$}
\put(273, 2){$s'=s\oplus kt$}
\end{overpic}}
\caption{The vertices $t_i$ are the potential target vertices for the next continued fraction block and the next jumps in the minimal path with start $s'$ and target $t_l$ will be of the form $\{s'\oplus  t_l, \ldots,  s'\oplus m t_l, \}$ for some $l$ and $m$.}
\label{fig:ndown}
\end{figure}
The continued fraction block above from $s$ to $s'$ can be continued as a shortest path by adding a continued fraction block with start $s'$ and target $t_l$ for some $l$. We say that this next continued fraction block is $l$ down from the first continued fraction block. Any shortest path in the Farey graph is composed of a continued fraction block followed by a continued fraction block that is $l$ down from it, followed by another continued fraction block $l'$ down from the last, and so on.
\begin{remark}\label{possE}
Below we will need an observation about paths in the Farey graph.
Given the continued fraction block above, the set of points clockwise of $v_{i-1}$, anti-clockwise of $v_{i+1}$, and having an edge to $v_i$ is $\{t\}$. Similarly the set of points clockwise of $t_{1}$, anti-clockwise of $t_l$ with an edge to $s'=v_k$ is $\{t_2, t_3, \ldots, t_{l-1}\}$. 
\end{remark}

If we fix a basis $\{\lambda,\mu\}$ for $\Z^2\cong H_1(T^2)$, then any embedded curve $\gamma$ on $T^2$ is determined by its homology class, which can be described by a pair of relatively prime integers $(a,b)$: $[\gamma]=a\lambda + b\mu$. Giving a pair of relatively prime integers $(a,b)$ is equivalent to giving a fraction $b/a$ which is an element of $\Q\cup \{\infty\}$. 

If $s_0$ and $s_1$ are slopes connected by an edge in the Farey graph, then there will be exactly two tight (minimally twisting) contact structures on $T^2\times [0,1]$ with convex boundary having two dividing curves of slope $s_0$ on $T^2\times\{0\}$ and $s_1$ on $T^2\times \{1\}$. These are called \dfn{basic slices} and we call one of them a positive basic slice and the other a negative basic slice. Giroux \cite{Giroux00} and Honda \cite{Honda00a} showed that any tight contact structure on $T^2\times [0,1]$ can be decomposed into the union of basic slices and this union will give a path in the Farey graph with each edge decorated by a $+$ or $-$ sign depending on the type of basic slice it corresponds to. Specifically, we have the following classification result for a minimally twisting contact structure, but first we recall that a contact structure on $T^2\times[0,1]$ with convex boundary having dividing slope $s_i$ on $T^2\times \{i\}$, for $i=0,1$, is minimally twisting if any convex torus in $T^2\times [0,1]$ that is smoothly isotopic to the boundary has dividing slope clockwise of $s_0$ and anticlockwise of $s_1$ in the Farey graph.
\begin{theorem}[Giroux 2000, \cite{Giroux00} and Honda 2000, \cite{Honda00a}]\label{thickenedT2}
Minimally twisting tight contact structures on $T^2\times[0,1]$ having convex boundary with two dividing curves of slope $s_i$ on $T^2\times \{i\}$ for $i=0,1$, are in one-to-one correspondence with minimal decorated paths in the Farey graph from $s_0$ clockwise to $s_1$, up to shuffling signs in continued fraction blocks. 
\end{theorem}
We note that a corollary of this theorem, discussed in \cite{Honda00a}, says that if we have a non-minimal path in the Farey graph between $s_0$ and $s_1$ with decorations on it, we can use it to build a contact structure by stacking basic slices according to the decorations. If the path can be shortened by removing two adjacent edges with opposite signs, then the contact structure is overtwisted. 

We describe solid tori as follows. Consider $T^2\times [0,1]$ and 
let $S_s$ be the quotient of $T^2\times[0,1]$ after collapsing the  leaves of a linear foliation of slope $s$ on $T^2\times\{0\}$. We will call $S_s$ the \dfn{solid torus with (lower) meridian of slope $s$.} We say a minimal path in the Farey graph from $s$ clockwise to $r$ is \dfn{partially decorated} if all the edges of the path have been assigned a $+$ or $-$ sign except the first edge, which is left blank. 
\begin{theorem}[Giroux 2000, \cite{Giroux00} and Honda 2000, \cite{Honda00a}]
Tight contact structures on a solid torus with meridional slope $s$ and convex boundary with two dividing curves of slope $r$ are in one-to-one correspondence with partially decorated minimal paths in the Farey graph from $s$ clockwise to $r$, up to shuffling signs in continued fraction blocks. 
\end{theorem}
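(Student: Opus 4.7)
The plan is to follow the convex surface theory approach of Giroux and Honda. Given a tight contact structure on $S_s$ with convex boundary of slope $r$, I would first isotope the core of the solid torus so that it becomes a Legendrian knot with a standard tubular neighborhood $N$ whose convex boundary has two dividing curves of some slope $s'$ connected to $s$ by an edge in the Farey graph. Any two such neighborhoods can be nested into one another, and the resulting choice of stabilization of the core will account for the ambiguity in the first edge of the path. The complement $S_s \setminus \interior(N)$ is then a thickened torus $T^2 \times [0,1]$ carrying a minimally twisting tight contact structure with convex boundary of slopes $s'$ and $r$.

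Next, I would invoke Honda's classification of tight contact structures on $T^2 \times [0,1]$: after an isotopy, any such structure decomposes as a stack of basic slices whose dividing slopes trace out a shortest path in the Farey graph from $s'$ clockwise to $r$, with each basic slice carrying a sign $\pm$. Concatenating the edge from $s$ to $s'$ with this path yields a shortest clockwise path from $s$ to $r$; decorating each edge after the first with its corresponding sign produces the partially decorated path associated to the contact structure. Conversely, given any partially decorated path, one builds a tight contact structure on $S_s$ by stacking the specified basic slices on top of a standard neighborhood of a Legendrian core with the correct framing, so the existence direction is straightforward.

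The core of the argument is to show that two tight contact structures on $S_s$ are isotopic if and only if their associated decorated paths agree up to shuffling signs within continued fraction blocks. One direction uses the fact that inside a single continued fraction block, adjacent basic slices of opposite sign can be rearranged via bypass attachment/detachment, so only the number of $+$ and $-$ signs within each block is a well-defined invariant; meanwhile the sign on the first edge can be absorbed by re-choosing the stabilization of the Legendrian core, which does not change the ambient contact structure on $S_s$. The other direction, showing that different sign totals within some block give non-isotopic contact structures, is established by computing the relative Euler class (or equivalently Honda's $\spinc$ invariant) of each tight structure and verifying that it records exactly the counts of $+$ and $-$ signs per block.

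The main obstacle is the shuffling lemma within a continued fraction block together with its sharpness at the transition between blocks. The proof requires a careful analysis, via Honda's bypass factoring technology in $T^2 \times [0,1]$, of which adjacent basic slices admit a one-parameter family of convex torus decompositions that interchange their signs. The crucial dichotomy is that within a continued fraction block the dividing slopes of three consecutive convex tori bound a triangle in the Farey graph, which produces the extra bypass disc needed to commute opposite signs; at a bend between continued fraction blocks this Farey-triangle condition fails, the relevant bypass is trivial or overtwisted, and so the order of signs across distinct blocks is rigid.
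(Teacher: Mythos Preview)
The paper does not supply its own proof of this theorem: it is stated as a background result and attributed directly to Giroux and Honda, with no argument given beyond the citation. So there is nothing in the paper to compare your proposal against.

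That said, your outline is a faithful sketch of the standard Honda--Giroux argument. One point deserves more care: your claim that the relative Euler class ``records exactly the counts of $+$ and $-$ signs per block'' is not quite how the non-isotopy direction is established in Honda's paper. The relative Euler class evaluated on a meridional disk gives a single integer, which is the total signed count across \emph{all} blocks; it does not by itself separate structures that differ only in how signs are distributed between distinct continued fraction blocks while keeping the overall sum fixed. Honda's actual argument distinguishes such structures by restricting to intermediate solid tori (equivalently, by looking at the relative Euler class on sub-annuli bounded by intermediate convex tori), which recovers the per-block data. Your sketch would be complete once you make this refinement explicit.
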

It will sometimes be helpful to consider the solid torus in a different way. In the construction above, we could let $S^s$ be the quotient of $T^2\times [0,1]$ after collapsing the leaves of a linear foliation of slope $s$ on $T^2\times \{1\}$. We say that $S^s$ is the \dfn{solid torus with upper meridian $s$}. Notice that the natural orientation on $\partial S^s$ coming from $T^2\times [0,1]$ is inward pointing, while on $\partial S_s$ it is outward pointing. The classification theorem above is the same for $S^s$ except in the definition of partial decoration, it is the last edge that is left blank. (It is always the edge adjacent to the meridian that is left blank.) 

In describing contact surgery below, we will mainly be interested in the situation where the dividing slope is $0$. For the classification of contact structures in the situation, we consider 
a negative surgery coefficient given by the continued fraction $[a_0,\ldots, a_n]$ where
$a_i<-1$ for all $i>0$ and $a_0 < 0$.

We can use this continued fraction to describe a minimal path in the Farey graph from $0$ anti-clockwise to $[a_0,\ldots, a_n]$. We start by assuming that none of the $a_i$ are $-2$ except possibly for the first or the last. Begin with a continued fraction block having start $s_0=0$ and target $t_0=\infty$. We then consider $v_i=s_0 \oplus it_0$ for $i=0$ to $|a_0+1|$. So $v_{|a_0+1|}=a_0+1$. The next continued fraction block will have start $s_1= s_0\oplus |a_0+1|t_0$, target $t_1=s_0\oplus |a_0|t_0$, and make $|a_1+2|$ jumps. In general, if we have described the path to the end of the $k^{th}$ continued fraction block, which had start $s_k=s_{k-1}\oplus |a_{k-1}+2|t_{k-1}|$,  target $t_k=s_{k-1}\oplus |a_k+1|t_{k-1}$, and vertices $s_k, s_k\oplus t_k,\ldots, s_k\oplus |a_k+2|t_k$, we start the next continued fraction block with start $s_{k+1}=s_k\oplus |a_k+2|t_k$, target $t_{k+1}=s_k\oplus |a_k+1|t_k$, and the path will have $|a_k+2|$ jumps, except the last continued fraction block will make $|a_n+1|$ jumps. This will give a minimal path from $0$ to $[a_0,\ldots, a_n]$. 
\begin{example}
Consider $[-3, -3, -2]$. According to the above discussion, $a_0=-3$ tells us to consider the vertices $0, -1, -2$ (that is, making $2=|-3+1|$ jumps along a continued fraction block). Then $a_1=-3$ tells us to make $1=|-3+2|$ jump along a continued fraction block with starting vertex $-2$ and target vertex $-3$. That is, we make a jump to $-5/2$. Finally, for $a_2=-2$ we make $1=|-2+1|$ jump in the continued fraction block with start $-5/2$ and target $-8/3$. Thus, the last jump will be to $-13/5$. One may easily check that $-13/5=[-3,-3,-2]$. 
\end{example}
\noindent
If $a_k$ and $a_{k+l+1}$ are not $-2$, or are the first or last entries in the continued fraction, but the $a_i$ in between them are $-2$, then after the continued fraction block associated to $a_k$, we have the continued fraction block associated to $a_{k+l+1}$ that is $l$ down from the previous block. 
\begin{example}
We now consider $[-2,-2,-2]$. The first $-2$ gives us the path from $0$ to $-1$ (that is $|-2+1|$ jump from $0$ towards $\infty$). The next $-2$ does not contribute to a jump, but tells us that we need to move down to a lower continued fraction block. The last $-2$ tells us to make $1$ jump in a continued fraction block that is $2$ down from the previous one. That is, the starting vertex will be $-1$, and the target vertex will be $-3/2$. So the last jump will be to $-4/3$. One may easily check that $-4/3=[-2,-2,-2]$. 
\end{example}

Notice that the above construction produces a path with $n+1$ continued fraction blocks with blocks having $|a_0+1|, |a_1+2|, \ldots, |a_{n-1}+2|, |a_n+1|$ edges, respectively. From this the theorem below easily yields.
\begin{theorem}\label{surgerytori}
Let $r<0$ be a rational number with continued fraction $[a_0,\ldots, a_n]$. Then on the solid torus with dividing slope $0$ and meridional slope $r$ there are
\[
|a_0(a_1+1)\cdots(a_n+1)|
\]
tight contact structures. 
\end{theorem}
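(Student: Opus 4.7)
The plan is to apply the Giroux--Honda classification theorem, stated just above, to the particular minimal path from $0$ anti-clockwise to $r=[a_0,\ldots,a_n]$ constructed in the paragraphs preceding the statement. That theorem puts tight contact structures on the solid torus with meridional slope $r$ and dividing slope $0$ in bijection with partially decorated paths from $r$ clockwise to $0$ (equivalently, from $0$ anti-clockwise to $r$), up to shuffling signs within each continued fraction block.

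First I would read off the block structure of the constructed path. For $n\ge 1$ it consists of $n+1$ continued fraction blocks whose edge counts are $|a_0+1|$ for block $0$, $|a_i+2|$ for each middle block $i$ with $1\le i\le n-1$, and $|a_n+1|$ for block $n$, with the understanding that whenever an interior $a_i=-2$ the block degenerates to $0$ edges and the next block is ``one down'' from its predecessor (consistent with the ``$\ell$ down'' convention in the preamble). For $n=0$ the path is a single continued fraction block of $|a_0|$ edges going $0,-1,-2,\ldots,a_0$. Because the partial decoration places the blank edge at the meridional-slope end of the path, the blank lies in the final block (block $n$); all earlier blocks are fully signed.

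Next I would count sign-equivalence classes in each block. A fully signed continued fraction block with $m$ edges admits exactly $m+1$ classes under shuffling, indexed by the number $k\in\{0,1,\ldots,m\}$ of positive basic slices, while the partially decorated block with $m$ edges (one blank, $m-1$ signed) admits $m$ classes, indexed by $k\in\{0,1,\ldots,m-1\}$. Plugging in the block sizes gives a contribution of $|a_0+1|+1=|a_0|$ from block $0$, of $|a_i+2|+1=|a_i+1|$ from each middle block, and of $|a_n+1|$ from block $n$. Multiplying,
\[
|a_0|\cdot|a_1+1|\cdots|a_{n-1}+1|\cdot|a_n+1| \;=\; |a_0(a_1+1)(a_2+1)\cdots(a_n+1)|,
\]
which is the claimed formula.

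The main thing to check carefully is the direction of the partial decoration, namely that the blank edge really sits at the meridional-slope end in our conventions, so it ends up in the last block rather than the first. A secondary technical point is confirming that degenerate blocks (arising when some interior $a_i=-2$, so the block has $0$ edges) correctly contribute the factor $1=|a_i+1|$, and that the $n=0$ case (where the construction collapses to a single block of $|a_0|$ edges rather than $|a_0+1|$) still yields $|a_0|$, in agreement with the formula.
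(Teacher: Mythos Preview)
Your proposal is correct and follows exactly the approach the paper takes: the paper records the block sizes $|a_0+1|, |a_1+2|, \ldots, |a_{n-1}+2|, |a_n+1|$ and then simply writes ``From this the above theorem easily yields,'' leaving the counting implicit. Your write-up supplies precisely the omitted details---identifying that the blank edge sits in the block adjacent to the meridional slope (block $n$), so that block contributes $|a_n+1|$ classes while each fully signed block with $m$ edges contributes $m+1$---and your separate treatment of the $n=0$ case is a sensible clarification of a boundary case the paper does not spell out.
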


Moving to lens spaces we recall that $L(p,q)$ is the result of $-p/q$ surgery on the unknot. We can describe this in another way. Consider $T^2\times [0,1]$ and take the quotient where on $T^2\times\{1\}$ collapse the leaves of a linear foliation of slope $0$ and on $T^2\times \{0\}$ collapse the leaves of a linear foliation of slope $-p/q$. This will also result in $L(p,q)$. So we can think of $L(p,q)$ as the result of the union of the solid tori $S^0$  and $S_{-p/q}$. Work of Giroux \cite{Giroux00} and Honda \cite{Honda00a} show  that any tight contact structure on $L(p,q)$ can be described by a path in the Farey graph from $-p/q$ clockwise to $0$ with decorations on all but the first and last edges. 

\subsection{Smooth surgery}
Here we recall how to construct various Dehn surgery diagrams for the same smooth $3$-manifold. It is well-known \cite{Rolfsen76} that the slam dunk operation relates the two surgery diagrams in Figure~\ref{fig:sd},  where $r=[a_0,\ldots, a_n]$. 
\begin{figure}[htb]
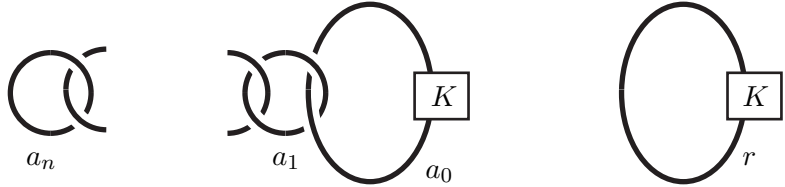
{
\begin{overpic}
{fig/sd}
\put(158, 5){$a_0$}
\put(100, 10){$a_1$}
\put(7, 10){$a_n$}
\put(278, 10){$r$}
\put(160, 32){$K$}
\put(278, 32){$K$}
\end{overpic}}
\caption{Slam dunk operation.}
\label{fig:sd}
\end{figure}
When discussing contact surgery below it will also be useful to have the ``rolled up" version of the left-hand side of Figure~\ref{fig:sd}. Starting with that diagram slide the $a_1$-framed unknot over the knot $K$ to get $K_1$ and then the $a_2$-framed unknot over $K_1$, and continuing in this manner until the $a_n$-framed unknot is slid. This will result in Figure~\ref{fig:rolled} which also realizes $r$ surgery on $K$, where $b_i=2i+\sum_{j=0}^i a_j$ for $ i \geq 0$, $c_1=a_1+1$ and $c_i=a_i+2$ for $i>1$. 
\begin{figure}[htb]
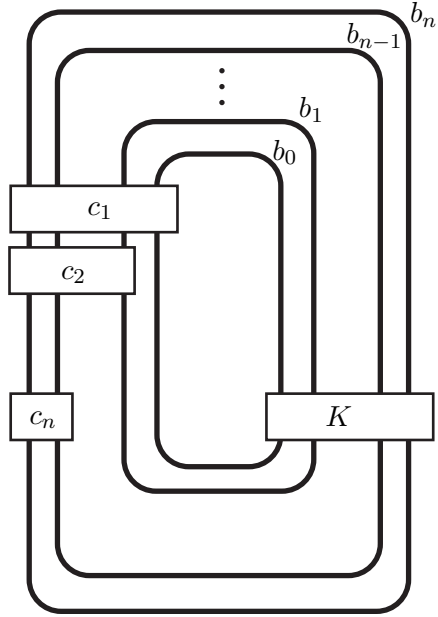
{
\begin{overpic}
{fig/rollup}
\put(100, 172){$b_0$}
\put(110, 188){$b_1$}
\put(128, 216){$b_{n-1}$}
\put(152, 224){$b_n$}
\put(120, 71){$K$}
\put(8, 72){$c_{n}$}
\put(20, 127){$c_2$}
\put(30, 151){$c_1$}
\end{overpic}}
\caption{Rolled up diagram.}
\label{fig:rolled}
\end{figure}

\subsection{Contact surgery}
Give a Legendrian knot $L$ in a contact $3$-manifold $(M,\xi)$, \dfn{contact $(r)$-surgery} on $L$ is the result of removing a standard neighborhood $N$ of $L$ from $M$ and gluing in a new solid torus with meridional slope $r+\tb(L)$ and then extending the contact structure over this torus to be some tight contact structure on the solid torus.  Notice that the resulting $3$-manifold is obtained from $M$ by smooth $r+\tb(L)$ surgery on $L$ and there are many possibilities for the contact structure on the solid torus that was glued into $M\setminus N$. So for a general $r$ there are many different possible contact structures that result from contact $(r)$-surgery. To see how many notice that if we used the contact framing to describe the Dehn surgery coefficient then it would be $r$. Thus the number of possible contact structures on the surgered manifold is given by Theorem~\ref{surgerytori}. As discussed in Section~\ref{ctstronsolidtori},  the contact structure will be determined by a partially decorated minimal path in the Farey graph from $r+\tb(L)$ clockwise to $\tb(L)$. 

Given a Legendrian knot $L$, suppose we perform contact $(r)$-surgery on $L$ with $r<0$. If $r=[a_0,\ldots, a_n]$ then Theorem~\ref{surgerytori} gives the number of possible contact structures obtained from such a surgery. We interpret this in terms of Legendrian surgery. Specifically, let $L_0$ be $L$ stabilized $|a_0+1|$ times, let $L_1$ be a Legendrian push-off of $L_0$ stabilized $|a_1+2|$ times, and continuing so that $L_k$ is the Legendrian push-off of $L_{k-1}$ stabilized $|a_k+2|$ times (recall that all the $a_i$ are less than $-1$ for $i>0$). Legendrian surgery on the link $L_0\cup \cdots \cup L_n$ will be equivalent to smooth $r+\tb(L)$ surgery on $L$. We also note that the number of different stabilizations at the $k^{th}$ stage of this construction corresponds to the $k^{th}$ continued fraction block in the path in the Farey graph from $0$ anti-clockwise to $r$ and hence the different choices of stabilizations give all the contact structures on the contact $(r)$-surgery. One may also check that the Legendrian realizations of the diagram on the left in Figure~\ref{fig:sd} also give all possible contact structures coming from contact $(r)$-surgery and again the different choices of stabilizations correspond to the different choices of sign in the path in the Farey graph.

\subsection{Decomposing symplectic fillings}\label{CM}
Here we discuss recent work of Christian and Menke that allows us to decompose certain symplectic fillings of contact manifolds based on special tori in the contact manifold. 
\begin{definition} A torus $T$ in a contact $3$-manifold $(M,\xi)$ is called a \dfn{mixed torus} if it is convex and contained in a neighborhood $N=T^2\times [0,1]$ such that $T$ splits $N$ into two basic slices with opposite sign. Let $s$ be the slope of the dividing curves on $T$ and $s_i$ the slope of the dividing curves on $T^2\times \{i\}$ for $i=0,1$. Let $E_T$ be the set of slopes in the Farey graph that are clockwise of $s_1$ and anti-clockwise of $s_0$ with an edge to $s$. We call these \dfn{exceptional slopes}. For any $e\in E_T$ we say the \dfn{$e$-splitting of $(M,\xi)$} is the result of cutting $M$ along $T$ and gluing in two solid tori with meridional slope $e$ to the resulting boundary components and extending $\xi|_{M\setminus T}$ over these solid tori with dividing slope $s$. 
\end{definition} 

Notice that since there is an edge in the Farey graph between $s$ and $e$, there is a unique tight contact structure on a solid torus with meridional slope $e$ and dividing slope $s$. Also note that inside the $e$-splitting, the cores of the added solid tori are Legendrian knots with standard neighborhoods given by the added tori. 

\begin{theorem}[Christian-Menke 2018, \cite{ChristianMenke25pre}]\label{splittingthm}
If $(X,\omega)$ is a (weak) symplectic filling of the contact manifold $(M,\xi)$ and $T$ is a mixed torus in $(M,\xi)$, then there is a symplectic manifold $(X',\omega')$ that is a weak symplectic filling of the result of $e$-splitting $(M,\xi)$ along some slope $e\in E_T$. Moreover, $(X,\omega)$ may be recovered from $(X',\omega')$ by attaching a round Weinstein $1$-handle to the Legendrian knots defined by the $e$-splitting. 
\end{theorem}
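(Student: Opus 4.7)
My plan is to follow Menke's JSJ-type decomposition strategy, adapted to mixed tori. The overall idea is to (i) realize the mixed torus $T\subset(M,\xi)$ as a Lagrangian torus $L$ sitting in the interior of $(X,\omega)$, (ii) cut $(X,\omega)$ open along $L$, and (iii) cap off the two new torus boundary components symplectically to obtain $(X',\omega')$.

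First, I would work locally near $T$. The mixed torus hypothesis places $T$ inside a neighborhood $N\cong T^2\times[0,1]$ in which $T$ splits $N$ into two basic slices of opposite signs, and for every $e\in E_T$ there is a parallel pre-Lagrangian torus $T_e\subset N$ whose Legendrian ruling has slope $e$. The opposite-sign condition is precisely what allows $T_e$ to be isotoped, through convex tori, so that it is the common boundary of two standard neighborhoods of Legendrian cores, each Legendrian isotopic to a closed leaf of slope $e$. This is the key contact-topological input.

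Next, I would promote $T_e$ to a genuine Lagrangian torus $L$ in $(X,\omega)$ via an SFT neck-stretching argument in the spirit of Bourgeois--Eliashberg--Hofer--Wysocki--Zehnder compactness together with Wendl-style existence/uniqueness results for fillings. Choose an $\omega$-compatible almost complex structure $J$ that is cylindrical near $\partial X$ and stretched along $T_e$, and study moduli of finite-energy planes (or half-cylinders) whose positive end wraps the Legendrian core of slope $e$. Using Honda's classification of tight contact structures on $T^2\times I$, the mixed-torus condition forces the relative homology and Fredholm index of the relevant moduli to match the model situation, and rules out every limit building except the one that contains a cylindrical component whose gluing yields a smooth Lagrangian torus $L\subset X$ smoothly isotopic to $T_e$.

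Once $L$ is in hand, the Weinstein neighborhood theorem for Lagrangians produces a tubular model $L\times D^2_\epsilon\subset X$ with the standard split symplectic form. Cutting along $L$ and symplectically filling each new torus boundary component by a neighborhood of a meridional disk of slope $e$ defines $(X',\omega')$. On the contact boundary this operation is exactly the $e$-splitting, since each glued solid torus carries the unique tight contact structure with meridional slope $e$ and dividing slope equal to that of $T$. Because the two new tori are modeled on pre-Lagrangian tori rather than convex contact-type ones, $\omega'$ only restricts positively to the contact planes, giving a weak filling even when $(X,\omega)$ is strong. The inverse operation of this cut-and-cap is exactly a round Weinstein $1$-handle attachment along the pair of new Legendrian cores, which recovers $(X,\omega)$. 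The principal obstacle, and the heart of the argument, is the middle step: producing $L$ from $T_e$. The mixed-torus hypothesis is what makes the index count and compactness analysis go through; without opposite-sign basic slices on the two sides of $T$, degenerations of the moduli space cannot be controlled and no Lagrangian representative can be extracted.
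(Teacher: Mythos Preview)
The paper does not contain a proof of this theorem; it is quoted from Christian and Menke's preprint \cite{ChristianMenke19pre} and used as a black box. So there is no ``paper's own proof'' to compare against.

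That said, your sketch has a structural problem independent of this. The object you propose to find, a closed Lagrangian torus $L$ in the interior of $X$, has codimension~$2$. Removing a Weinstein neighborhood $L\times D^2_\epsilon$ of such a torus creates a single new boundary component diffeomorphic to $T^3$, not ``two new torus boundary components'' that can be capped by solid tori. The inverse of attaching a round $1$-handle in dimension four is to cut along a properly embedded \emph{solid torus} $S^1\times D^2\hookrightarrow X$ whose boundary $2$-torus lies in $M=\partial X$ as the mixed torus $T$; this is a codimension-$1$ hypersurface, and cutting along it genuinely produces two new solid-torus ends which one then caps. Christian and Menke's actual argument, building on Menke's earlier JSJ-type decomposition, uses moduli of punctured holomorphic curves asymptotic to Reeb orbits to produce precisely such a $3$-dimensional splitting piece inside the filling, not a closed Lagrangian. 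Your invocation of SFT compactness and Wendl-style analysis is in the right spirit, but the target object and the cutting operation must live one dimension higher than what you wrote.
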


We recall how to attach a round Weinstein $1$-handle. Given two Legendrian knots $L_1$ and $L_2$ in the boundary of a symplectic manifold $W$ with convex boundary. We may attach a Weinstein $1$-handle to $W$ so that the attaching sphere consists of a point on $L_1$ and a point on $L_2$. This may be done so that in the boundary of the new symplectic manifold, we see the connected sum of $L_1$ and $L_2$. We now attach a Weinstein $2$-handle to $L_1\# L_2$. Attaching both these handles is what we mean by attaching a round Weinstein $1$-handle to $W$. 

\begin{remark}
We note that the statement of \cite[Theorem 1.1]{ChristianMenke25pre} looks  different than the one we give in Theorem~\ref{splittingthm}. Since they did not discuss the Farey graph, they stated a version of the theorem for specific values of $s_0,s_1$, and $s$. Nevertheless, by a change of coordinates on the torus, Theorem~\ref{splittingthm} above exactly agrees with \cite[Theorem 1.1]{ChristianMenke25pre}.
\end{remark}

\begin{remark}
We note that all of the symplectic fillings discussed in the introduction were implicitly strong symplectic fillings, but since any symplectic filling of a contact structure on a rational homology ball can be deformed to a strong filling \cite{Eliashberg91, OhtaOno99}, we can ignore the distinction for the contact manifolds we are considering. However, in our arguments below, $(X',\omega)$ might only be a weak symplectic filling of its boundary; this will not be a problem, since are arguments rulling out rational homology ball symplectic fillings of the original $(M,\xi)$ will only rely on the topology of $(X',\omega)$ and not its geometry. 
\end{remark}

\begin{remark}\label{disconnected}
We note that if $T$ is a separating torus in the above theorem, then $\partial X'$ is disconnected. Moreover, it is known that if a contact manifold is supported by a planar open book, then any symplectic filling must have a connected boundary \cite{Etnyre04b}. Thus, if one of the components of $\partial X'$ is supported by a planar open book, then $X'$ is disconnected and each component of the boundary is filled by one of the components of $X'$. Below, we will be interested in the case where one of the components of the $e$-splitting is a lens space. These are all known to be supported by planar open books \cite{Schoenenberger05}, so in these cases, $X'$ is disconnected. 
\end{remark}

\begin{remark}\label{homologyofsplitting}
We make a simple observation about the homology of $X$ and its relation to $X'$. We will be interested in cases when $X'=X_1\cup X_2$. In this case, a simple homology computation shows that the Betti numbers satisfy 
\[
b_1(X)=b_1(X_1)+b_1(X_2)-1 \text{ and } b_i(X)=b_i(X_1)+b_i(X_2)
\] for $i=2,3$, or 
\[
b_2(X)=b_2(X_1)+b_2(X_2)+1 \text{ and } b_i(X)=b_i(X_1)+b_i(X_2)
\]
for $i=1,3$. To see this, recall that a round $1$-handle decomposes uniquely as a union of a $1$-handle and a $2$-handle passing over the $1$-handle geometrically twice and algebraically zero times. If we first attach a $1$-handle to connect $X_1$ and $X_2$, then the Betti number $b_i$ of the resulting $4$-manifold will be given by $b_i(X_1) + b_i(X_2)$, for $i=1,2,3$.    When we attach the $2$-handle as described above, then either this $2$-handle cancels against a $1$-handle in one of the pieces or it does not. If it does, then we have the first case, and otherwise we have the second case above.

Thus, if $X$ is a rational homology ball, then one of $X_i$ is a rational homology ball and the other is a rational homology $S^1$.
\end{remark}

\begin{remark}\label{splits}
Suppose $(M,\xi)$ is obtained from $(M',\xi')$ by contact $(r)$-surgery on $L$ where the contact surgery is described by a partially decorated path $P$ in the Farey graph. If the path has a continued fraction block containing both signs, then there is a mixed torus $T$ and $E_T=\{t\}$ where $t$ was the target vertex for the continued fraction block. See Remark~\ref{possE}. Thus cutting along $T$ will produce a solid torus and $M'$ with a neighborhood of $L$ removed and we see an $e$-splitting will give a lens space and the result of a contact $(r')$-surgery on $L$ where the continued fraction expansion of $r'$ has a shorter length than that of $r$. 

An important special case of this is when the first continued fraction block has mixed signs. In that case $E_T=\{\infty\}$ so the $e$-splitting of $M$ will give a lens space and $M'$. Thus, the filling of $M$ will come from a filling of $M'$ and a filling of the lens space by attaching a round $1$-handle. 

Similarly, if two adjacent continued fraction blocks had different signs, and $T$ was the torus separating the continued fraction blocks, then $T$ is a mixed torus and 
\[
E_T=\{t_1, t_2, \ldots, t_{l-1}, s\oplus (k+1) t\}.
\] 
As above we see that for any $e\in E_T$, the $e$-splitting of $M$ will consist of a lens space and the result of some contact $(r')$-surgery on $L$.
\end{remark}

\subsection{An invariant of plane fields}\label{recalltheta}
Let $\xi$ be an oriented tangent $2$-plane field on an oriented rational homology $3$-sphere $Y$.
Gompf \cite{Gompf98} showed that the homotopy class of $\xi$
 is determined by its induced spin$^c$ structure  and its $3$-dimensional invariant $\theta(\xi) \in \mathbb{Q}$, which is defined as follows. For any given pair $(Y, \xi)$ as above, there is  a compact smooth $4$-manifold $X$, equipped with an almost-complex structure $J$, so that $\partial X = Y$, and $\xi$ is homotopic to the oriented $2$-plane field $TY \cap JTY$ (the complex tangencies of $Y$).  Let $\chi(X)$ and $\sigma(X)$ denote the Euler characteristic and the signature of $X$, respectively and let $c_1(X, J)$ denote the first Chern class of the almost complex manifold $(X, J)$. Then it turns out that the rational number  $c^2_1 (X, J)-2\chi(X)-3\sigma(X)$ is independent of the choice of the almost complex $4$-manifold $(X, J)$ satisfying the conditions above and hence it is an invariant of the homotopy class of $\xi$, denoted by $\theta(\xi)$.  Note that $c^2_1 (X, J)$ is well-defined only when rational coefficients are used in (co)homology, which explains why $\theta(\xi) \in \mathbb{Q}$. According to \cite[Theorem 4.5]{Gompf98}, $\theta(\xi)$ depends only on $Y$ and the homotopy class of $\xi$, but independent of the orientation of $\xi$ and reverses sign if the orientation of $Y$ is reversed. The next result is well-known but we state it as a lemma to be able to refer to it in the rest of  the text. 

\begin{lemma} \label{lem: rhb}  Suppose that $Y$ is an oriented rational homology $3$-sphere equipped with a contact structure $\xi$. If $\theta(\xi) \neq -2$, then $(Y, \xi)$  does not admit a rational homology ball symplectic filling. \end{lemma}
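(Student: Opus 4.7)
The plan is to prove the contrapositive by directly computing $\theta(\xi)$ from any hypothetical rational homology ball symplectic filling $(X,\omega)$ of $(Y,\xi)$ and showing the answer must be $-2$.

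First, I would observe that a symplectic filling automatically supplies the almost complex $4$-manifold needed in the definition of $\theta$. Any symplectic form $\omega$ on $X$ admits a compatible almost complex structure $J$, and since $(X,\omega)$ is a filling of $(Y,\xi)$ (strong or weak, since the formula for $\theta$ only requires that $\xi$ be homotopic to the field of complex tangencies), we may arrange $\xi = TY \cap JTY$ along $\partial X = Y$. Thus $(X,J)$ is an admissible pair for computing $\theta(\xi)$, giving
\[
\theta(\xi) = c_1^2(X,J) - 2\chi(X) - 3\sigma(X).
\]

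Next, I would evaluate each term using the rational homology ball hypothesis. Since $H_*(X;\Q) = H_*(B^4;\Q)$, we have $b_0(X)=1$ and $b_1(X)=b_2(X)=b_3(X)=b_4(X)=0$, so $\chi(X) = 1$. The intersection form on $H_2(X;\Q) = 0$ is trivial, hence $\sigma(X) = 0$. Finally, $H^2(X;\Q) = 0$ forces $c_1(X,J)$ to be a torsion class, so its rational self-intersection $c_1^2(X,J)$ (which is exactly what enters the definition of $\theta$) vanishes. Substituting these three values gives
\[
\theta(\xi) = 0 - 2\cdot 1 - 3\cdot 0 = -2.
\]

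Finally, taking the contrapositive: if $\theta(\xi) \neq -2$, then no symplectic rational homology ball filling of $(Y,\xi)$ can exist. There is no serious obstacle here; the only mild point of care is to make sure the chosen $(X,J)$ genuinely computes $\theta(\xi)$, which follows because $\theta$ is independent of the choice of admissible almost complex filling by Gompf's theorem, and symplectic fillings provide such admissible pairs.
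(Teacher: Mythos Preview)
Your proof is correct and follows essentially the same approach as the paper: both arguments equip a hypothetical symplectic rational homology ball filling with an almost complex structure realizing $\xi$ as the complex tangencies, then read off $\chi=1$, $\sigma=0$, and $c_1^2=0$ from the vanishing rational (co)homology to conclude $\theta(\xi)=-2$. The paper cites \cite{Etnyre98} for the existence of a tamed $J$ making $(Y,\xi)$ the strictly pseudoconvex boundary, which is the precise justification for the step you handle informally.
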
 

\begin{proof} Suppose that $(X, \omega)$ is a weak symplectic filling of $(Y, \xi)$. According to \cite[Section 5]{Etnyre98}, there is an almost complex structure $J$ on $X$ which is tamed by $\omega$ such that $(Y,\xi)$ is the strictly pseudoconvex boundary of $(X,J)$. In particular, $\xi$ is the complex tangencies of $J$ on $Y$. Now assume as an extra hypothesis that $X$ has the rational homology of the $4$-ball. This immediately implies that $\chi(X)=1$,  and also  $\sigma(X) =0$ and $c^2_1(X, J)=0$ since the intersection form on $X$ identically vanishes. We conclude that $\theta(\xi) =-2$. 
\end{proof}

Any contact structure on any closed oriented $3$-manifold can be conveniently described by a contact surgery diagram and the $\theta$-invariant of the underlying oriented $2$-plane field can be calculated form such a  diagram  as explained in \cite{DingGeigesStipsicz04} (see also  \cite[page 195]{OzbagciStipsicz2004}). 

\section{Comparing the $\theta$-invariants of contact structures} \label{sec: compare} 
In this section we will prove Proposition~\ref{canminimizes} that shows for certain Seifert fibered spaces the canonical contact structure $\xi_{can}$ minimizes the $\theta$-invariant among all tight contact structures. 

We begin with some preliminary observations about positive/negative-definite matrices and quadratic forms. 

\begin{lemma}\label{lem:negdef}
If $Q$ is a negative-definite matrix whose off-diagonal entries are non-negative, then $Q$ is inverse-negative, i.e., every entry of $Q^{-1}$ is negative. 
\end{lemma}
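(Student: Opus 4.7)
The plan is to pass to the symmetric positive-definite matrix $A = -Q$ and show that $A^{-1}$ has strictly positive entries; equivalently, $Q^{-1}$ has strictly negative entries. By hypothesis, $A$ is symmetric positive-definite with non-positive off-diagonal entries, and its diagonal entries $A_{ii} = e_i^T A e_i$ are strictly positive.

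The main step will be a Neumann-series expansion. Choose any real number $s > \lambda_{\max}(A)$ and set $B = sI - A$. Then $B$ is a symmetric matrix with non-negative entries: the diagonal entries are $s - A_{ii} > 0$ and the off-diagonal entries are $-A_{ij} \geq 0$. Its eigenvalues are exactly the numbers $s - \lambda_i(A)$, which all lie in $(0, s)$, so the spectral radius satisfies $\rho(B) < s$. Consequently
\[
A^{-1} \;=\; (sI - B)^{-1} \;=\; \frac{1}{s}\sum_{k=0}^\infty \left(\frac{B}{s}\right)^{k}
\]
converges, and since $B$ is entrywise non-negative, so is every term in the series. This already shows $A^{-1} \geq 0$ entrywise, and the $k=0$ contribution $s^{-1}I$ forces every diagonal entry of $A^{-1}$ to be strictly positive, giving strict negativity of the diagonal entries of $Q^{-1}$.

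For an off-diagonal position $(A^{-1})_{ij}$ with $i \neq j$, strict positivity reduces to the existence of some $k$ with $(B^k)_{ij} > 0$, which in turn holds precisely when there is a chain $i = i_0, i_1, \ldots, i_k = j$ such that $B_{i_{\ell-1} i_\ell} > 0$ at every step, i.e.\ a chain of strictly positive off-diagonal entries of $Q$ linking $i$ to $j$. The main obstacle in the argument is that this connectedness is not a formal consequence of negative-definiteness plus non-negative off-diagonals, as the block-diagonal $Q = -I$ shows. In the applications in this paper, however, $Q$ is the intersection form of a connected plumbing graph, so the off-diagonal support of $Q$ is a connected graph on the vertex set, the required chain exists for every pair $i \neq j$, and strict negativity of all entries of $Q^{-1}$ follows.
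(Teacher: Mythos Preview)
Your Neumann-series argument is one of the standard proofs of the $M$-matrix inverse-positivity theorem; the paper's proof is simply a one-line citation of that theorem from Berman--Plemmons, so in spirit the two approaches coincide, with yours being self-contained rather than a black-box appeal to the literature.

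You are also right to flag the connectedness issue. The lemma as stated is slightly too strong: the general $M$-matrix result only yields $(-Q)^{-1}\ge 0$ entrywise, and strict positivity of the off-diagonal entries requires irreducibility, as your example $Q=-I$ shows. The paper's citation glosses over this distinction. Your observation that in every application here $Q$ is the intersection form of a \emph{connected} plumbing tree---hence irreducible---supplies exactly the missing hypothesis and makes the strict inequality go through. This is a genuine (if minor) refinement of the lemma's statement that the paper leaves implicit.
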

\begin{proof}
If we set $P=-Q$ then $P$ will be a positive-definite matrix whose off-diagonal terms are nonpositive. Such a matrix is called an $M$-matrix and it is known that its inverse has positive entries, see \cite[Chapter~6]{BermanPlemmons1994}. Thus we see the entries of $Q^{-1}$ are negative. 
\end{proof}

\begin{lemma} \label{lem: concave} Suppose that $Q$ is a negative-definite $m \times m$ matrix whose off-diagonal entries are nonnegative. Consider the quadratic form $f: \R^m \to \R$ given as $f({\bf x})={\bf x}^T Q^{-1} {\bf x}$ and let ${\bf y}$ be fixed nonzero vector in $\R^m$ whose entries are nonnegative.  If  we set $$D= \{{\bf x} \in \R^m \; | \;   |{\bf x}_i| \leq |{\bf y}_i| \;  \mbox{for all} \; 1 \leq i \leq m\},$$  then $f|_D$ attains its minimum at ${\bf y}$. Moreover,  for any  ${\bf x} \in D$, we have $f({\bf y}) < f({\bf x})$, provided that there exists some $1 \leq i \leq m$ such that  $|{\bf x}_i| <  |{\bf y}_i|.$
\end{lemma}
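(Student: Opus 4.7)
The plan is to exploit the sign information furnished by Lemma~\ref{lem:negdef}. Setting $A=Q^{-1}$, that lemma gives $A_{ij}<0$ for all $i,j$, and writing
\[
f(\mathbf{x})=\sum_{i,j}A_{ij}\, x_i x_j
\]
expresses $-f$ as a positive-coefficient combination of the pairwise products $x_i x_j$. Minimizing $f$ over the box $D$ therefore amounts to making each product $x_i x_j$ as large as possible subject to $|x_i|\leq |y_i|$, which should happen precisely when all coordinates share a common sign and every $|x_i|$ is saturated at $|y_i|$.

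First I would reduce to the nonnegative octant. For any $\mathbf{x}\in D$, let $|\mathbf{x}|$ denote the coordinatewise absolute value; since $y_i\geq 0$, the vector $|\mathbf{x}|$ still lies in $D$. The identity
\[
f(\mathbf{x})-f(|\mathbf{x}|)=\sum_{i,j}A_{ij}\bigl(x_ix_j-|x_i||x_j|\bigr)
\]
is a sum of nonnegative terms, because each factor $A_{ij}$ is negative and each factor $x_ix_j-|x_i||x_j|$ is nonpositive. Hence $f(\mathbf{x})\geq f(|\mathbf{x}|)$, and it suffices to compare $f(\mathbf{y})$ with $f(\mathbf{z})$ for vectors $\mathbf{z}\in D$ satisfying $0\leq z_i\leq y_i$.

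For such $\mathbf{z}$, the termwise comparison
\[
f(\mathbf{z})-f(\mathbf{y})=\sum_{i,j}A_{ij}(z_iz_j-y_iy_j)
\]
is again a sum of nonnegative terms, since each $A_{ij}<0$ and each $z_iz_j-y_iy_j\leq 0$. Combined with the previous step this yields $f(\mathbf{y})\leq f(|\mathbf{x}|)\leq f(\mathbf{x})$, establishing the minimum. For the strict inequality, suppose $|x_i|<|y_i|$ for some index $i$, which forces $y_i>0$. After passing to $|\mathbf{x}|$, the diagonal contribution
\[
A_{ii}\bigl(|x_i|^2-y_i^2\bigr)
\]
is strictly positive, while all other terms in the sum remain nonnegative; hence $f(|\mathbf{x}|)>f(\mathbf{y})$, and therefore $f(\mathbf{x})>f(\mathbf{y})$.

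The argument is essentially direct inspection once Lemma~\ref{lem:negdef} is invoked, so there is no serious obstacle; the only subtlety is making sure that passing to $|\mathbf{x}|$ keeps one inside $D$, which is precisely where the hypothesis that $\mathbf{y}$ has nonnegative entries is used.
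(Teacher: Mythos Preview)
Your argument is correct. It differs from the paper's in that you bypass convex analysis entirely: the paper observes that $Q^{-1}$ is negative definite, hence $f$ is strictly concave, and then invokes the principle that a strictly concave function on a compact convex polytope attains its minimum only at extreme points; it then compares the values of $f$ at the (at most) $2^m$ vertices $(\pm y_1,\dots,\pm y_m)$ of $D$ using the negativity of the entries of $Q^{-1}$. You instead work termwise, first replacing $\mathbf{x}$ by $|\mathbf{x}|$ and then comparing $|\mathbf{x}|$ to $\mathbf{y}$ coordinate by coordinate, extracting both the inequality and the strict inequality from the diagonal term. Your route is more elementary---it never uses that $Q^{-1}$ is negative definite, only that its entries are negative---while the paper's route makes the role of concavity more visible. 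One small quibble: the hypothesis $y_i\geq 0$ is used not to keep $|\mathbf{x}|$ inside $D$ (that holds regardless), but rather in your second step, where the comparison $0\leq z_i\leq y_i$ needs $|y_i|=y_i$.
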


\begin{proof} Since $Q$ is negative-definite,  $Q^{-1}$ is also negative definite and hence $f$ is strictly concave. The set $D$ is a convex and compact subset of  $\R^m$. The strictly concave function $f|_D$  attains its minimum at one of the extreme points of the convex set $D$ (that is the vertices of the polygon $D$). Note that there are at most $2^m$ extreme  points of $D$, one of which is {\bf y}. The others are obtained by possibly negating the entries of ${\bf y}$.   It follows that $f$ restricted to $D$ attains  its minimum at ${\bf y}$ since all the entries of $Q^{-1}$ are negative by Lemma~\ref{lem:negdef}. The fact that all the entries of $Q^{-1}$ are negative, also implies the last statement of the lemma. 
\end{proof}

We are now ready to prove Proposition~\ref{canminimizes} that says if  $Y=Y(e_0;r_1,r_2,r_3)$ is a Seifert fibered space with $e_0\leq -3$ or $e_0=-2$ and $Y$ an $L$-space, then 
\[
\theta(\xi_{can})<\theta(\xi)
\]
for any contact structure $\xi$ on $Y$ not isotopic  to $\pm\xi_{can}$. 

\begin{proof}[Proof of Proposition~\ref{canminimizes}] Suppose that  $Y$ is as in the statement of the proposition. Then according to the classification of tight contact structures on $Y$ by Wu \cite{Wu06} in the case that $e_0\leq -3$ and by Ghiggini \cite{Ghiggini08} in the case that $e_0=-2$, all the contact structures on $Y$ come from Legendrian surgery on some Legendrian realization of the surgery diagram shown on the left of Figure~\ref{fig:e0lm2}. 

Let $\xi$ be any given tight (hence Stein fillable) contact structure on $Y$, which is represented by a Legendrian surgery diagram as described above.  By fixing a Legendrian surgery diagram for the contact structure $\xi$, we also fix a Stein surface $(X, J_\xi)$ inducing  $\xi$ on its boundary $Y$. Moreover, by a theorem due to Gompf \cite{Gompf98},  the first Chern class $c_1(X, J_\xi)$ is determined by the rotation numbers of the Legendrian unknots in the diagram. 

Next we observe that the inequality $\theta (\xi_{can}) < \theta(\xi)$ is equivalent to the inequality 
\[
c_1^2 (X, J_{\xi_{can}}) < c_1^2 (X, J_\xi),
\]
since the Euler characteristic and signature of the fillings of all the contact structures given from the Legendrian surgery diagram are the same. 
Let $Q_{X}$ denote the intersection matrix of the $4$-manifold $X$. Then $$c_1^2 (X, J_\xi) ={\bf r}^T_\xi  Q^{-1}_{X}  {\bf r}_\xi,  $$ where  ${\bf r}_\xi$ is the rotation vector in $\mathbb{R}^{m}$, whose entries correspond the rotation numbers of the Legendrian knots in the surgery diagram realizing $\xi$. 
 Then for any tight  contact structure $\xi$ on $Y$, which is not isotopic  to $\pm\xi_{can}$, the inequality 
\begin{equation}\label{eq: quad}
 {\bf r}_{\xi_{can}}^T  Q^{-1}_{X}  {\bf r}_{\xi_{can}}  <  {\bf r}^T_\xi  Q^{-1}_{X}  {\bf r}_\xi 
\end{equation} 
immediately follows from  Lemma~\ref{lem: concave} by setting $Q=Q_{X}$, ${\bf y}= {\bf r}_{\xi_{can}}$ and  ${\bf x}={\bf r}_{\xi}$. 
\end{proof}

\section{Seifert fibered structures on $S^1\times S^2$}\label{SFSonS1S2}
In this section we record a simple observation about which small Seifert fibered spaces can be obtained by surgery on a regular fiber in a Seifert fibration of $S^1\times S^2$ and the corresponding manifolds they bound. We also refer the reader to \cite{Matkovic2023} for the contact topology of $S^1\times S^2$ and constructing symplectic fillings of small Seifert fibered spaces from $S^1\times D^3$.
We begin with  the topology and contact geometry of $S^1\times S^2$ in Section~\ref{s1s2} and then discuss the symplectic fillings in Section~\ref{s1s2fillings}.

\subsection{The standard contact structure on $S^1\times S^2$}\label{s1s2}
We begin by proving that Figure~\ref{fig:s1s2} describes a Seifert fibered structure on $S^1\times S^2$, and that the regular fiber $F$ is a torus knot: 

\begin{figure}[htb]
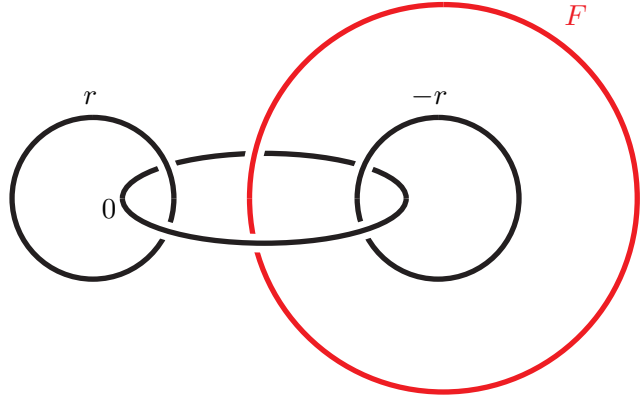
{
\begin{overpic}
{fig/s1s2}
\put(28, 110){$r$}
\put(152, 110){$-r$}
\put(210, 140){\color{dred}$F$}
\put(35, 67){$0$}
\end{overpic}}
 \caption{A surgery diagram for $S^1\times S^2$ with a Seifert fibered structure where $F$ is a regular fiber.}
  \label{fig:s1s2}
\end{figure}
\begin{lemma}\label{sfsons1s2}
Surgery on the black unknots with coefficients $r, 0, -r$  in Figure~\ref{fig:s1s2} describes $S^1\times S^2$ with a Seifert fibered structure so that the regular fiber $F$ is a $(q, -q')$-torus knot if $r=q/p$ and $p'q-q'p=1$ with $p'\leq p$. 
\end{lemma}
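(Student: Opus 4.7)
My plan is to prove the lemma in two parts: first, identifying the $3$-manifold produced by the surgery as $S^1\times S^2$, and then pinning down the curve $F$ as a $(q,-q')$-torus knot in $S^3$.

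For the first part, I would recognize Figure~\ref{fig:s1s2} as a standard surgery presentation of a Seifert fibered space over $S^2$. With central framing $e_0=0$ and side framings $r=q/p$ and $-r=-q/p$ on two meridians of the central unknot, the rules converting these framings to Seifert invariants $(\alpha_i,\beta_i)$ and normalizing so that $0<\beta_i<\alpha_i$ produce the invariants $(-1;(q,p),(q,q-p))$, with a $-1$ shift absorbed into $e_0$. I would then compute directly that the Euler number is
\[
-(-1)-\tfrac{p}{q}-\tfrac{q-p}{q}=0,
\]
and that the first homology is
\[
H_1 \;\cong\; \mathbb{Z}^2/\langle(q,p)\rangle \;\cong\; \mathbb{Z},
\]
where the second isomorphism uses $\gcd(p,q)=1$, equivalently the Bezout identity $p'q-q'p=1$. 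A Seifert fibered space over $S^2$ with two singular fibers, Euler number zero, and $H_1=\mathbb{Z}$ is necessarily $S^1\times S^2$.

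For the second part, I would first compute the homology class of a regular fiber $h$ in $H_1(S^1\times S^2)=\mathbb{Z}$. Abelianizing the standard Seifert presentation of $\pi_1$ gives relations $q g_1+p h=0$, $q g_2+(q-p)h=0$, $g_1+g_2+h=0$, and a short calculation (solving via Bezout with $p',q'$) yields that $h$ represents $q$ times a generator of $H_1$. Next I would realize this class geometrically: taking $F$ to be a $(q,-q')$-cable of the central $0$-framed unknot $K_0$, drawn on the standard Heegaard torus $\partial N(K_0)\subset S^3$, gives a curve which as a knot in $S^3$ is precisely the $(q,-q')$-torus knot. A linking-number computation (using $\mathrm{lk}(F,K_0)$, $\mathrm{lk}(F,m_1)$, $\mathrm{lk}(F,m_2)$ together with the three surgery relations $\mu_1+\mu_2=0$ and $p\mu_i\pm q\mu_0=0$) then verifies that $F$ lands in the regular fiber class; the Bezout identity $p'q-q'p=1$ is precisely the algebraic condition making the winding numbers $(q,-q')$ compatible with this class.

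The main obstacle will be bookkeeping conventions: the surgery-framing convention for rational $r=q/p$ (i.e.\ whether the new meridian is $q\mu+p\lambda$ or $p\mu+q\lambda$), the sign convention for meridians and longitudes on the Heegaard torus, and the convention for which generator ``$q$'' counts in a $(q,-q')$-torus knot. The hypothesis $p'<p$ in the statement selects a canonical representative $(p',q')$ of the Bezout pair, and once all conventions are pinned down consistently, these choices are precisely what produces the torus knot type $(q,-q')$ rather than some other permutation or sign variant.
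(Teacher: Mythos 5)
Your first step --- identifying the surgered manifold as $S^1\times S^2$ via the Seifert invariants, the vanishing Euler number, and $H_1\cong\Z^2/\langle(q,p)\rangle\cong\Z$ --- is fine, and is a legitimate (more computational) alternative to what the paper does; the paper instead identifies the manifold directly as the quotient of $T^2\times[0,1]$ in which the slope-$q/p$ foliation is collapsed at both ends, splitting it into the two solid tori $S_r$ and $S^r$ along the middle torus.

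The gap is in your second step. Identifying the regular fiber as the $(q,-q')$-curve on the Heegaard torus is a statement about the \emph{isotopy class} of $F$ on that torus, but your proposed verification is purely homological. In $S^1\times S^2$ the meridian of the standard Heegaard torus is null-homologous, so every $(q,b)$-curve with $\gcd(q,b)=1$ represents $q$ times the generator of $H_1$; the linking-number computation you describe can only recover the longitudinal winding $q$ and is blind to the meridional coordinate $-q'$, which is exactly the nontrivial content of the lemma (it is what fixes the contact framings and cabling slopes used later in Lemmas~\ref{lem: framing} and~\ref{cable}). The Bezout identity $p'q-q'p=1$ is therefore not a ``compatibility with the homology class'' condition; its actual role is to make $\bigl(\begin{smallmatrix}p'&p\\ q'&q\end{smallmatrix}\bigr)$ an element of $SL_2(\Z)$, so that it induces a diffeomorphism of $T^2\times[0,1]$ carrying the standard splitting of $S^1\times S^2$ (slope $\infty$ collapsed at both ends) to the splitting with slope $q/p$ collapsed at both ends; the inverse of this matrix then carries the fiber to the $(q,-q')$-curve, which is how the paper pins down $-q'$. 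A secondary problem: you place $F$ on $\partial N(K_0)$, but after the two meridian surgeries the complement of $K_0$ is a Seifert piece with two exceptional fibers of multiplicity $q$, not a solid torus, so $\partial N(K_0)$ is not a Heegaard torus of the surgered manifold; the relevant Heegaard torus is the one separating the two vertical surgery curves, i.e.\ the middle torus $T^2\times\{1/2\}$ in the paper's description. To repair your argument you would need to replace the homological check with an explicit identification of the fibration and the Heegaard splitting, at which point you are essentially reconstructing the paper's change-of-basis proof.
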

\begin{proof}
The standard representation of $S^1\times S^2$ is given by $0$-surgery on the unknot.  This can equivalently be described as a quotient of $T^2\times[0,1]$ where the leaves of linear foliations of slope $\infty$ are collapsed on both $T^2\times \{0\}$ and $T^2\times \{1\}$. We can also describe $S^1\times S^2$ as  a quotient of $T^2\times[0,1]$ where the leaves of linear foliations of slope $q/p$ are collapsed on both $T^2\times \{0\}$ and $T^2\times \{1\}$. This second description of $S^1\times S^2$ is the one shown in Figure~\ref{fig:s1s2}. To see this notice that $T^2\times \{1/2\}$ splits $S^1\times S^2$ into two solid tori with lower and upper meridian $r$: $S_{r}$ and $S^{r}$ (see Section~\ref{ctstronsolidtori} for this notation and a discussion about solid tori with upper and lower meridians). This first solid torus $S_r$ is clearly just the result of $r$ surgery on the right-hand black unknot in Figure~\ref{fig:s1s2} but Dehn surgeries are not usually described with upper meridians so we need to convert $S^r$ into a solid torus with lower meridian. To this end notice that the map $f:  T^2\times[0,1]\to T^2\times[0,1]$ given by $f(\theta,\phi, t)=(-\theta, \phi, 1-t)$ induces a diffeomorphism from $S^r$ to $S_{-r}$. Thus the Dehn surgery corresponding to the second torus is shown on the left-most black curve in Figure~\ref{fig:s1s2}. 

We note the diffeomorphism of the torus represented  by the matrix
\[
\begin{bmatrix}
p'& p\\ 
q'&q
\end{bmatrix}
\]
gives a diffeomorphism from $T^2\times [0,1]$ to itself that induces a diffeomorphism from the first description of $S^1\times S^2$ above to the second description. The inverse diffeomorphism will take the fiber $F$ to a $(q,-q')$-curve on a Heegaard torus of $S^1\times S^2$. 
\end{proof}

Next, we examine in more detail the contact geometry of the various Seifert fibered structures on $S^1\times S^2$.  
\begin{lemma}\label{lem: framing}
Any realization of the standard tight contact structure on $S^1\times S^2$ via contact $r$ and $-r$ surgery on the two vertical black unknots in Figure~\ref{fig:s1s2} will have the basic slices of the same signs for one of the solid tori, and the other solid torus will have basic slices of the opposite sign. Moreover, the fiber $F$ in such a realization can be made Legendrian with contact framing agreeing with the framing coming from the Heegaard torus that contains it. 
\end{lemma}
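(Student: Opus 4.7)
The plan is to combine Eliashberg's uniqueness theorem for the tight contact structure on $S^1\times S^2$ with the Giroux--Honda classification of tight contact structures on solid tori and on $T^2\times I$ recalled in Section~\ref{ctstronsolidtori}. Following the proof of Lemma~\ref{sfsons1s2}, view $(S^1\times S^2, \xi_{std})$ as the quotient of $T^2\times[0,1]$ obtained by collapsing slope-$r$ linear foliations on the two boundary tori, so that the two solid tori $V_1=S_r$ and $V_2=S^r$, corresponding to the two vertical unknots in Figure~\ref{fig:s1s2}, meet along the central torus $T=T^2\times\{1/2\}$. After isotoping $T$ to be convex with two dividing curves of some slope $s$, each $V_i$ inherits a tight contact structure recorded by a partially decorated minimal path in the Farey graph from the meridional slope of $V_i$ clockwise to $s$.

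To obtain the sign statement, the key observation is that $\xi_{std}$ is universally tight on $S^1\times S^2$, hence its restriction to the $T^2\times I$ complement of small neighborhoods of the cores of $V_1$ and $V_2$ is also universally tight and minimally twisting. By Honda's classification, such a $T^2\times I$ has a basic-slice decomposition in which all basic slices carry the \emph{same} sign. Deforming the core-neighborhoods down onto the cores themselves, this decomposition restricts to the Farey-path decomposition of each $V_i$, so the bypasses within each $V_i$ must all have a single sign. The two signs are necessarily opposite between $V_1$ and $V_2$: when one compares the boundary orientation of $V_1$ at $T$ with the boundary orientation of $V_2$ at $T$, the outward normals point in opposite directions, and Honda's definition of the sign of a basic slice uses the outward normal, so the two sign sequences that $V_1$ and $V_2$ read off of the same sequence of basic slices in the middle $T^2\times I$ are mutually opposite.

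For the Legendrian statement about the fiber $F$: by Lemma~\ref{sfsons1s2}, on the Heegaard torus $T$ the curve $F$ represents the slope $(q,-q')$, which for the relevant choices of $r$ is distinct from the dividing slope $s$. Honda's Legendrian realization principle applied to the convex torus $T$ then produces a $C^0$-small isotopy of $F$ to a Legendrian curve sitting on a convex torus parallel to $T$. Since this Legendrian curve is not isotopic to a dividing curve, its contact framing coincides with the framing induced by the surface, which is precisely the Heegaard-torus framing asserted in the statement.

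The main obstacle I expect is the careful bookkeeping of Honda's sign conventions in the second paragraph: one must verify precisely how the sign assigned to a basic slice transforms under a reversal of the $T^2\times I$ orientation, so as to conclude that the signs recorded by $V_1$ and $V_2$ on a common basic slice are actually opposite, rather than coincidentally equal. A useful cross-check comes from Gompf's formula for $c_1$ of the unique Stein filling $S^1\times B^3$ of $(S^1\times S^2, \xi_{std})$: since $c_1(S^1\times B^3)=0$, the sum of rotation numbers of the Legendrian realizations of the two vertical unknots in any realizing contact-surgery diagram must vanish, and this balance is consistent only with the opposite-sign stabilization pattern predicted by the Farey-graph analysis.
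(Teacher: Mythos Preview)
Your treatment of the sign statement is essentially fine and parallels the paper's, though the paper argues slightly differently: it observes that if the two solid tori did \emph{not} have opposite consistent signs, then gluing their basic-slice decompositions would produce a non-minimal Farey path with mixed signs, whose shortening forces an overtwisted disk, contradicting tightness of $\xi_{std}$. Your route through universal tightness of the intervening $T^2\times I$ is a legitimate alternative and the orientation-reversal bookkeeping you flag is exactly the sign flip the paper records for the diffeomorphism $f$.

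However, the framing argument contains a genuine error. You assert that if the Legendrian realization of $F$ on the convex torus $T$ is \emph{not} isotopic to a dividing curve, then its contact framing coincides with the surface framing. This is backwards. For a Legendrian curve $L$ on a convex surface with dividing set $\Gamma$, the contact twisting relative to the surface framing is $-\tfrac{1}{2}\,|L\cap\Gamma|$; thus a Legendrian \emph{ruling} curve (transverse to $\Gamma$) has strictly negative twisting, while only a Legendrian \emph{divide} (parallel to $\Gamma$) has twisting zero. To get the claimed framing you must realize $F$ as a Legendrian divide, and for that you need the dividing slope of $T$ to equal the slope of $F$.

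This is where your unspecified slope $s$ and the coordinate confusion bite. In the description you set up (collapsing slope $r$ on both ends), the fiber $F$ has slope $0$ on $T$, not $(q,-q')$; the latter is its slope after transporting back to the standard $S^1\times S^2$ coordinates via the matrix in Lemma~\ref{sfsons1s2}. The paper handles this by working with the explicit model $\ker(\cos\pi t\,d\theta+\sin\pi t\,d\phi)$ on $T^2\times[0,1]$, observing that the characteristic foliation on $T^2\times\{1/2\}$ is linear of slope $0$, perturbing to a convex torus with dividing slope $0$, and then taking $F$ to be a Legendrian divide on that torus. You need this explicit identification of the dividing slope; the Legendrian realization principle alone will not deliver twisting zero for a curve transverse to $\Gamma$.
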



\begin{proof}
Describing $S^1\times S^2$ as we did in the proof of Lemma~\ref{sfsons1s2}, the contact structure on $S^1\times S^2$ is simply $\ker (\cos \pi t\, d\theta + \sin \pi t\, d\phi)$ where $t$ is the coordinate on $[0,1]$ and $\theta$ and $\phi$ are angular coordinates on $T^2$. We can take the torus $T^2\times \{1/2\}$ and note it is linearly foliated by leaves of slope $0$. We can perturb this torus to be convex with two dividing curves of slope $0$. Then the torus splits $S^1\times S^2$ into two solid tori $N_0=S_\infty$ and $N_1=S^\infty$ and each torus has convex boundary and supports a unique tight contact structure. In particular, each torus is the standard neighborhood of a Legendrian knot, we denote them by $L_0$ and $L_1$, respectively. To think of $S^\infty$ as a neighborhood of $L_1$ in a more ``standard" way it should have a lower meridian, not an upper meridian. We can use the diffeomorphism $f$ in the proof of Lemma~\ref{sfsons1s2} to convert the upper meridian to a lower meridian. 
We note that this diffeomorphism will take tight contact structures on $S^\infty$ to tight contact structures on $S_\infty$ but the signs of the basic slices (that is, the decorations on the edges) will flip.

We can also describe $S^1\times S^2$ as  a quotient of $T^2\times[0,1]$, where the leaves of linear foliations of slope $r=q/p$ are collapsed on both $T^2\times \{0\}$ and $T^2\times \{1\}$. Note this description is the same as doing smooth $q/p$ surgery on $L_0$ and $-q/p$ surgery on $L_1$. Contact geometrically, we can think of these surgeries as removing the neighborhood $N_0$ of the Legendrian knot $L_0$ and replacing it with a solid torus $N_0'$ with lower meridian $q/p$ and then similarly replacing $N_1$ with $N_1'$. Each of the solid tori that are glued back in is determined by a path in the Farey graph (see Section~\ref{ctstronsolidtori}). We claim that the signs of the basic slices in $N_1'$ must all be the same and be opposite to the signs in $N_0'$ (which also must all be the same). If this were not the case, then when one adds the basic slices describing the contact structure on $N_1'$ to $N_0'$, they would flip signs, and hence we would have a path in the Farey graph with both signs, and this path will not be minimal. When the path is shortened to become minimal, we will have to shorten edges with mixed signs, and this implies the contact structure is overtwisted (see the discussion immediately after Theorem~\ref{thickenedT2}). Since the standard contact structure on $S^1\times S^2$ is tight, the signs must be as claimed. This means in any description of $S^1\times S^2$ in terms of contact surgery on $L_0$ and $L_1$, the stabilizations of $L_0$ must all be the same and opposite to those in $L_1$. 

Finally, we notice that $\partial N_1=\partial N_1'$ is a convex torus with vertical dividing curves. A Legendrian divide on this torus will be a realization of $F$ with contact twisting $0$ relative to the Heegaard framing. 
\end{proof}

We end this section by noting that any negative contact surgery on $F$ will result in a small Seifert fibered space with $e_0\geq -1$. 

\begin{lemma}\label{onlye0}
Any small Seifert fibered space obtained from the standard contact structure on $S^1\times S^2$ by some negative contact surgery on a Legendrian realization of a fiber in a Seifert fibration of $S^1\times S^2$ will have $e_0\geq -1$. 
\end{lemma}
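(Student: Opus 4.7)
The plan is to track how the normalized Seifert invariants change through the surgery. First, I would identify the Seifert fibration of $S^1\times S^2$ in use. Since $S^1\times S^2$ is not a rational homology sphere, its Seifert Euler number must vanish, so any Seifert structure on it with $n$ exceptional fibers of normalized parameters $r_i\in(0,1)$ satisfies $e_0=-\sum r_i$. Combined with $e_0\in\Z$, $r_i\in(0,1)$, and $\pi_1(S^1\times S^2)=\Z$, this forces either $n=0$ with $e_0=0$, or $n=2$ with $e_0=-1$ and $r_1+r_2=1$. Because a single surgery on a regular fiber creates at most one new exceptional fiber, only the $n=2$ case can possibly produce a small Seifert fibered space as output, so I may assume we begin with $e_0=-1$.

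Next I would invoke Lemma~\ref{lem: framing} to reinterpret the contact surgery. The contact framing on the Legendrian realization of $F$ coincides with the Heegaard framing, which in turn agrees with the fiber framing of the Seifert fibration (in a standard neighborhood of $F$, the nearby Heegaard tori sweep out parallel regular fibers). Thus a negative contact $(s)$-surgery on $F$ with $s<0$ is just smooth $s$-surgery measured against the fiber framing. Writing $s=-p/q$ with $p,q>0$ coprime, this attaches a new Seifert pair $(p,q)$ to the fibration, and the output is a small Seifert fibered space precisely when $p\geq 2$.

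The final step is to normalize the unnormalized parameter $r_3=q/p$ into $(0,1)$ and read off the new $e_0'$ using the Euler-number relation $e'=-e_0'-(r_1+r_2+r_3')$, where the new Euler number equals $e'=0-q/p=-q/p$. When $p>q$, the value $r_3'=q/p$ is already in $(0,1)$ and substituting $r_1+r_2=1$ gives $e_0'=-1$. When $p<q$, write $q=kp+r$ with $k\geq 1$ and $0<r<p$; then $r_3'=r/p$ and a short calculation yields $e_0'=k-1\geq 0$. In either case $e_0'\geq -1$, as desired.

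The trickiest step is aligning sign and normalization conventions for Seifert invariants with the paper's convention (from Figure~\ref{fig:small}) that $-1/r_i$ is the surgery coefficient on the $i$th exceptional fiber, together with verifying carefully that the Heegaard framing of $F$ really does coincide with the fiber framing of the Seifert structure in question. Once those conventions are pinned down, the Euler-number arithmetic above is straightforward.
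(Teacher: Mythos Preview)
Your argument is correct and follows essentially the same route as the paper's proof: both pin down the contact framing on $F$ via Lemma~\ref{lem: framing} (contact framing $=$ Heegaard framing $=$ fiber framing), then normalize the resulting Seifert invariants to read off $e_0$. The only cosmetic difference is that the paper carries out the normalization by explicit Rolfsen twists on the diagram in Figure~\ref{fig:s1s2}, while you do the equivalent bookkeeping through the Euler-number relation $e'=-e_0'-\sum r_i'$; your opening classification of Seifert structures on $S^1\times S^2$ is a slightly more general way of arriving at the same starting data the paper takes from its diagram.
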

\begin{proof}
The Seifert fibration shown in Figure~\ref{fig:s1s2} is not in standard form since one, or both, of $r$ and $-r$ is not less than $-1$. If we assume $r$ is positive and it takes $n+1$ left-handed Rolfsen twists to convert $r$ into a number less than $-1$, then it will take $n$ right-handed Rolfsen twists to convert $-r$ into a number less than $-1$. To see this assume that $r=q/p\in(1/(n+1),1/n)$, for some non-negative integer $n$, then one may easily check that $q/(p-(n+1)q)<-1$ and similarly $-q/(p-nq)<-1$. The first surgery coefficient is obtained from $q/p$ by $n+1$ left-handed Rolfsen twists, and this reduces the framing on the center curve in Figure~\ref{fig:s1s2} by $n+1$. The second surgery coefficient is obtained from $-q/p$ by $n$ right-handed Rolfsen twist, and this increases the framing on the center curve by $n$. Thus when we have normalized these two surgery coefficients we see the framing on the central curve is $-1$. Here we assume that $r\not=1/n$, since otherwise  the surgery diagram describes a Seifert fibered structure on $S^1\times S^2$ with no singular fibers and hence surgery on $F$ will produce a Seifert fibered space with only one singular fiber (that is a lens space) and we are interested in constructing Seifert fibered spaces with three  singular fibers. 

 In Lemma~\ref{lem: framing}, we showed that the regular fiber $F$ can be realized by a Legendrian knot with contact framing agreeing with the framing coming from the Heegaard torus (which is also the same as the framing coming from the fibration).
In Figure~\ref{fig:s1s2} that will be the zero framing. Thus any negative contact surgery on the fiber will have smooth surgery coefficient less than $0$. When the surgery coefficient is less than $-1$, then the Seifert fibered space will have $e_0=-1$. More generally, if the surgery coefficient is in $(-1/n, -1/(n+1))$, for some positive integer $n$, then $e_0=n-1$. (When the surgery coefficient is $-1/n$ for some $n$, then we obtain a lens space, and do not consider those.)
\end{proof}

\begin{remark}
The same argument as in the proof clearly shows that if we perform a surgery on $F$ with coefficient $q/p\in(1/(n+1),1/n)$, for $n\geq 0$,  then we will obtain a Seifert fibered space with $e_0=-1-n$.
\end{remark}

\subsection{Symplectic fillings}\label{s1s2fillings}

It is interesting to build contact structures on the above Seifert fibered spaces that can bound symplectical rational homology balls.
To do so we set up some new notation for Seifert fibered spaces. We denote by $S(r_1,r_2,r_3)$ the result of surgery on the link in Figure~\ref{fig:s1s2} where the surgery coefficients on the three vertical circles are $-1/r_1,-1/r_2,-1/ r_3$ and the surgery coefficient on the horizontal circle is $0$. This is the ``unnormalized" form of a Seifert fibered space (note that by performing a right-handed Rolfsen twist on one of the vertical curves and a left-handed twist on another gives another unnormalized presentation of the same manifold). Of course one changes this surgery diagram to a normalized form by Rolfsen twisting the vertical curves until their surgery coefficients are all less than $-1$. 

\begin{lemma}\label{cable}
For any fraction $q/p$, relatively prime positive integers $h<m$ , and a non-positive integer
$-k$ , the Seifert fibered space
\[
S\left(\frac pq, -\frac pq, \frac{m^2}{km^2+mh+1}\right)
\]
admits a tight contact structure that is symplectically filled by a rational homology ball. This symplectic filling is built by attaching a Weinstein $2$-handle to $S^1\times D^3$ along a negative Legendrian cable of a Legendrian torus knot  (determined by $r=q/p$ as in Lemma~\ref{sfsons1s2}). 
\end{lemma}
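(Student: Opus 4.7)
The plan is to construct the filling explicitly as a Stein $2$-handlebody. Equip $S^1\times D^3$ with its standard Stein structure, whose contact boundary is $(S^1\times S^2,\xi_{std})$. By Lemma~\ref{sfsons1s2}, this $S^1\times S^2$ admits the Seifert fibered surgery presentation of Figure~\ref{fig:s1s2} with vertical coefficients $\pm q/p$, and by Lemma~\ref{lem: framing} the regular fiber $F$ may be realized as a Legendrian $(q,-q')$-torus knot whose contact framing coincides with the framing coming from the Seifert fibration, so $\tb(F)=0$ measured in that framing.

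Next, let $L$ be a Legendrian realization of the negative $(m,-h)$-cable of $F$ with all stabilizations of a single sign, together with $k$ additional stabilizations beyond the maximal Thurston--Bennequin value. Attach a Stein $2$-handle to $L$; smoothly this is $(\tb(L)-1)$-surgery on $L$ with respect to the cable framing. A Kirby calculus manipulation---pushing the cable back onto $F$, absorbing the $-mh$ twist of the $(m,-h)$-cable on $\partial N(F)$ relative to the Seifert framing, and accounting for the $k$ extra stabilizations---converts this into a rational Dehn surgery on $F$ with Seifert-framed coefficient exactly $-(km^2+mh+1)/m^2$. Adjoining this surgery to Figure~\ref{fig:s1s2} produces the surgery diagram for $S(p/q,-p/q,m^2/(km^2+mh+1))$, so $\partial X$ is the Seifert fibered space in the statement.

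It remains to verify that the resulting Stein $4$-manifold $X$ is a rational homology ball. We have $\chi(X)=\chi(S^1\times D^3)+1=1$. From the Seifert presentation, the regular fiber $F$ represents $p$ times a generator of $H_1(S^1\times S^2;\mathbb{Z})\cong H_1(S^1\times D^3;\mathbb{Z})\cong\mathbb{Z}$; since the meridian of $F$ is null-homologous in $N(F)$, the cable $L$ is homologous to $m[F]$, so $[L]=mp\neq 0$ in $H_1(S^1\times D^3;\mathbb{Z})$. Attaching a $2$-handle along a non-torsion class kills $H_1$ rationally and introduces no new $H_2$, giving $H_*(X;\mathbb{Q})\cong H_*(B^4;\mathbb{Q})$. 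The induced contact structure on $\partial X$ is Stein fillable, hence tight.

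The main obstacle is the framing bookkeeping: verifying that the Thurston--Bennequin of the stabilized Legendrian negative cable translates precisely to the Seifert-framed surgery coefficient $-(km^2+mh+1)/m^2$ on $F$, so that the denominator $m^2$ (reflecting the cable's $m$-fold longitudinal multiplicity) and the numerator $km^2+mh+1$ (assembled from the $k$ extra stabilizations, the $mh$ cable twist, and the $+1$ from the Stein attachment) arise exactly as in the statement.
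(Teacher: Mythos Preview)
Your overall strategy coincides with the paper's: build the filling as $S^1\times D^3$ with a single Stein $2$-handle attached along a Legendrian cable of the torus knot $F$, and identify the boundary via a cable–surgery formula. The rational homology ball verification is fine (though note $[F]=q$, not $p$, in $H_1(S^1\times S^2)$; this does not affect the conclusion).

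The genuine gap is in where you place the $k$ stabilizations. You stabilize the \emph{cable} $k$ times and then do Legendrian surgery, i.e., you perform $-(k+1)$-surgery relative to the torus framing on the smooth $(m,-h)$-cable of $F$. For $k\geq 1$ this filling slope has intersection number $k+1>1$ with the fiber slope on $\partial N(C)$, so the filled cable space is a Seifert fibered piece over the disk with \emph{two} exceptional fibers (of indices $m$ and $k+1$), not a solid torus. Gluing this to $S^1\times S^2\setminus N(F)$ produces a Seifert fibered space over $S^2$ with four singular fibers, not the small Seifert fibered space $S(p/q,-p/q,m^2/(km^2+mh+1))$. The asserted Kirby reduction ``absorbing the $-mh$ twist and accounting for the $k$ extra stabilizations'' therefore fails: each stabilization of the cable shifts the framing by $1$, not by $m^2$, and Gordon's identity $S^3_{C_{m,-h}(K)}(-mh-1)\cong S^3_K\!\left(\tfrac{-mh-1}{m^2}\right)$ only applies at the single slope of distance $1$ from the fiber.

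The paper's fix is to stabilize $F$ itself $k$ times to $\widehat F$ with $\tb(\widehat F)=-k$, and then take the standard Legendrian $(m,-h)$-cable of $\widehat F$ (which has contact framing equal to the torus framing, so no further stabilization is needed). Smoothly this cable is the $(m,-(h+km))$-cable of $F$, and Legendrian surgery on it is $-1$-surgery relative to the torus framing; now Gordon's formula applies directly and yields $-\frac{m(h+km)+1}{m^2}=-\frac{km^2+mh+1}{m^2}$ surgery on $F$, producing the $km^2$ term you were after.
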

\begin{remark}\label{exoffillings}
We note that one can obtain any $e_0\geq -1$ via this construction. Indeed, if $-k<-1$ and $q/p\in (1/(n+1),1/n)$ then the normalized form of this Seifert fibered space is 
\[
Y\left(-1;\frac{(n+1)q-p}{q}, \frac{p-nq}{q}, \frac{m^2}{km^2+mh+1}\right)
\] 
If $k=0$ then the Seifert fibered space will have $e_0>-1$.  For example, in the above lemma if we take $k=0$ and $h=1$ then the resulting Seifert fibered space is 
\[
Y\left(m-2;\frac{(n+1)q-p}{q}, \frac{p-nq}{q}, \frac{1}{m+1}\right).
\]
To see this notice that the surgery coefficient on $F$ is $-\frac{m+1}{m^2}$ and $m-1$ right-handed Rolfsen twist will make the coefficient $-m-1$. 
\end{remark}

\begin{proof}
As discussed in the proof of Lemma~\ref{sfsons1s2}, we see the surgery diagram in Figure~\ref{fig:s1s2} with $r=q/p$ gives $S^1\times S^2$ and the red curve, which we denote by $F$, is a torus knot which is a regular fiber in a Seifert fibered structure on $S^1\times S^2$. We note that $F$ is also a leaf in the characteristic foliation for a Heegaard torus for $S^1\times S^2$ and so its contact framing is $0$. Thus, the contact surgery coefficient, when considered below, is the same as the smooth surgery coefficient.

We now claim that a $-\frac{km^2+mh+1}{m^2}$ surgery on $F$, which will be the Seifert fibered space $S(p/q,-p/q, m^2/(km^2+mh+1)$, will bound a symplectic rational homology ball.  We will construct the symplectic rational homology ball by attaching a Weinstein $2$-handle to the Weinstein manifold $S^1\times D^3$ along a Legendrian knot in its boundary $S^1\times S^2$. To see the smooth knot type that will produce the desired Seifert fibered space, we recall that Gordon \cite[Section~7]{Gordon1983} showed that $-mh-1$ surgery on the $(m,-h)$-cable of a knot $L$ is equivalent to $\frac{-mh-1}{m^2}$ surgery on $L$. Below we will discuss how to perform such a surgery on a Legendrian cable of a stabilization of the Legendrian torus knot $F$.

We recall the notion of a standard negative cable of a Legendrian knot $L$. As before $S^{\pm}(L)$ denotes positive/negative stabilization of $L$. Let $0<h<m$ be coprime. Choose a standard neighborhood $N$ of $L$ and $N_\pm$ a standard neighborhood of $S^{\pm}(L)$. Taking the framing on $L$ to be the contact framing we see that $N\setminus N_\pm$ is $T^2\times [0,1]$   and within $N\setminus N_{\pm}$ there exist convex tori with any dividing slope in $[-1,0]$. Since $-h/m\in[-1,0]$, we can realize $L^\pm_{(m,-h)}$ as a Legendrian divide on a convex torus in $N\setminus N_\pm$. The Legendrian $L^\pm_{(m,-h)}$ is called the \dfn{$\pm$-standard $(m,-h)$-cable of $L$} and the contact framing of $L^\pm_{(m,-h)}$ is $0$ relative to the torus on which it lies. Finally, if $tb(L)=-k$, then  $L^\pm_{(m,-h)}$ is smoothly the $(m, -h-km)$-cable of $L$. 

Now since $F$ is Legendrian with contact framing $0$, we can stabilize it $k$ times to get a Legendrian knot $\widehat F$ with Thurston-Bennequin invariant $-k$. By Gordon's result mentioned above, performing Legendrian surgery on $\widehat F^\pm_{(m,-h)}$ will be the same as $-\frac{km^2+mh+1}{m^2}$ {surgery} on $F$. Thus we obtain the desired symplectic rational homology ball by attaching a Weinstein $2$-handle to $S^1\times D^3$ along the Legendrian knot $\widehat F^\pm_{(m,-h)}$ in its boundary. \end{proof}

\begin{remark}\label{nege0smooth}
If we are only interested in building smooth rational homology balls, notice that we can perform the same construction, but we can use any framing on $F$. In terms of Lemma~\ref{cable}, this would correspond to taking $k$ to be any integer. In this case, we note that when $-k$ is a positive integer we will be able to get Seifert fibered spaces with $e_0$ any negative integer. For example, if $-k=1$ and $h=m-1$ then the above construction creates a smooth rational homology ball filling of 
\[
Y\left(-m-3;\frac{(n+1)q-p}{q}, \frac{p-nq}{q}, \frac{m-2}{m-1}\right).
\]
Of course, Theorem~\ref{main1} says this rational homology ball cannot symplectically fill any contact structure on the manifold. 
\end{remark}
We can now prove Theorem~\ref{arbitmany} that notes when $e_0=-1$ there can be arbitrarily many tight contact structures admitting rational homology ball fillings. 

\begin{proof}[Proof of Theorem~\ref{arbitmany}]
We note that when $k$ is large in the above proof the Legendrian knot $F$ must be stabilized many times before the cable is done. Thus the cable will have $2(k+1)$ possible rotation numbers (because there are $k+1$ rotation numbers for the stabilized $F$ and for each one of these, the $\pm$-cable will have distinct rotation numbers). By \cite{LiscaMatic97} we know that performing Legendrian surgery on these cables will result in distinct contact structures. 

Recall when $k>0$, we will always obtain a manifold with $e_0=-1$. Thus we see that there are examples of $e_0=-1$ small Seifert fibered spaces admitting arbitrarily many distinct contact structures with symplectic rational homology ball fillings. 
\end{proof}

We would like to reinterpret the above construction to better see how it relates to Christian and Menke's splitting theorem, Theorem~\ref{splittingthm}. To this end, we first note that if $L$ is a Legendrian knot in $(Y,\xi)$ and $L'$ is a Legendrian knot in $(Y',\xi')$, then attaching a round $1$-handle to a $4$-manifold with boundary $Y\cup Y'$ along $L$ and $L'$ changes the boundary as follows: a standard neighborhood of $L$ is removed from $Y$, a standard neighborhood of $L'$ is removed $Y'$, and the resulting boundary components are glued together so that the meridian and dividing curves on one of the tori are mapped to the meridian and dividing curves on the other torus. We will call this process {\em attaching a round $1$-handle to $Y\cup Y'$}, even though technically we need a $4$-manifold with boundary $Y\cup Y'$ to perform the operation. Since our interest lies in the induced effect on the  $3$-manifolds, we suppress mention of the $4$-manifold until it is needed.

A useful example of this construction is when $Y'$ is $S^1\times S^2$ and $L'$ is the Legendrian knot in the knot type of $S^1\times \{pt\}$ whose neighborhood has dividing curves of slope $0$. We note that when we perform the round $1$-handle attachment along $L$ and $L'$,  we simply remove a neighborhood of $L$ from $Y$ but then reglue a solid torus that replaces that neighborhood. That is, we are once again left with $(Y,\xi)$. 

We are now ready to reinterpret the construction in the proof of Theorem~\ref{arbitmany} in terms of round a $1$-handle attachment.  Recall that only the lens spaces $L(m^2,mh-1)$ symplectically bound a rational homology ball, which we denote by $B_{m,h}$, and only their universally tight contact structure bounds such a symplectic manifold, see \cite{ChristianLi23, EtnyreRoy21,Lisca08}. The manifold $B_{m,h}$ is built by attaching a Weinstein $2$-handle to a Legendrian $(m,-h)$-torus knot $L''$ in $S^1\times S^2=\partial (S^1\times D^3)$. Let $L'$ be a Legendrian representative of $S^1\times \{pt\}$ from the previous paragraph. We can assume that $L''$ is outside the standard neighborhood of $L'$. Thus if we perform a round $1$-handle attachment to $Y\cup S^1\times S^2$ along $L$ in $Y$ and $L'$ in $S^1\times S^2$ we obtain $Y$, as noted in the previous paragraph. However, we notice that $L''$ becomes the $(m,-h)$-cable of $L$ in $Y$ after the handle attachment. Thus if we first do $-mh-1$ surgery on $L''$ in $S^1\times S^2$ and then perform the above round $1$-handle attachment, this will result in the manifold obtained from $Y$ by doing the same surgery on the $(m,-h)$-cable of $L$. 

\begin{lemma}
The symplectic rational homology ball constructed in Lemma~\ref{cable} can also be obtained by attaching a round Weinstein $1$-handle to $(S^1\times D^3)\cup B_{m,h}$ along a Legendrian  torus knot in $S^1\times S^2$ (as described in Lemma~\ref{cable}) and a Legendrian realization of the core of a Heegaard torus in $L(m^2,mh-1)$. 
\end{lemma}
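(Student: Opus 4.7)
The plan is to exploit the commutativity of disjoint Weinstein (and round) handle attachments, using exactly the reinterpretation set up in the two paragraphs preceding the lemma. I will stage the construction in two pieces that can be glued in either order.

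Block A: take the standard tight $(S^1\times S^2, \xi_{std})$ and inside it the Legendrian knot $\hat F$ from the proof of Lemma~\ref{cable}, namely the Legendrian realization of the regular fiber $F$ stabilized $k$ times. This is the torus knot in $S^1\times S^2$ referred to in the statement. Block B: take a second copy of $(S^1\times S^2, \xi_{std})$ containing the Legendrian core $L'$ of a Heegaard torus (with standard neighborhood of dividing slope $0$) together with a Legendrian $(m,-h)$-torus knot $L''$ placed outside a standard neighborhood of $L'$; this is the setup that produces the Lisca filling $B_{m,h}$.

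First I would attach the Weinstein $2$-handle to $L''$ in Block B. By Lisca's construction, this yields the symplectic filling $B_{m,h}$ of $(L(m^2,mh-1), \xi_{can})$. Since a standard neighborhood of $L'$ is disjoint from $L''$, the knot $L'$ survives the handle attachment, and the Heegaard torus of $S^1\times S^2$ containing $L'$ becomes a Heegaard torus of the resulting lens space; thus $L'$ descends to a Legendrian realization of the core of a Heegaard torus in $(L(m^2,mh-1), \xi_{can})$. Next, attach a round $1$-handle joining $\hat F$ in Block A to this Legendrian core in the lens space, producing the symplectic manifold described in the statement of the lemma.

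Now I would observe that the Weinstein $2$-handle (attached to $L''$) and the round $1$-handle (attached to $\hat F$ and $L'$) are supported in disjoint subsets of the boundary, so they commute as symplectic handle attachments. Performing them in the reverse order, the round $1$-handle is attached first to $(\text{Block A}) \sqcup (\text{Block B})$ along $\hat F$ and $L'$. As observed in the paragraph preceding the lemma, this particular round $1$-handle attachment (with $L'$ the Heegaard core having dividing slope $0$) cancels the Block B factor, returning a single copy of $(S^1\times S^2, \xi_{std})$; moreover the knot $L''$ is carried along and becomes precisely the standard $\pm$-$(m,-h)$-cable of $\hat F$, i.e., $\hat F^{\pm}_{(m,-h)}$. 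Attaching the remaining Weinstein $2$-handle to this cable is exactly the construction of Lemma~\ref{cable}, so the two procedures build the same symplectic rational homology ball.

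The main technical point is making sure that the sign and contact-structure data line up: one must match the sign of the cable ($+$ versus $-$) with the sign of the bypass chosen on the round $1$-handle region, and must check that the specific tight contact structure on the solid torus glued in during the round $1$-handle attachment is the one compatible with $\xi_{can}$ on the lens space side (equivalently, with $\xi_{std}$ after cancellation). Both of these verifications are local computations on $T^2\times I$ using the Farey graph description from Section~\ref{ctstronsolidtori}, and they reduce to checking that the contact framing of $L'$ (dividing slope $0$ relative to the Heegaard torus) agrees with the framing of the Legendrian divide on the convex torus used to produce $\hat F^{\pm}_{(m,-h)}$ in the proof of Lemma~\ref{cable}, which was already noted to be $0$ relative to the same Heegaard torus.
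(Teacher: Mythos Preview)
Your argument is correct and is essentially the paper's own proof, organized slightly differently. The paper attaches the round $1$-handle to $(S^1\times D^3)\sqcup B_{m,h}$ and then directly cancels: the $2$-handle of the round $1$-handle against the $1$-handle of $B_{m,h}$, and the $1$-handle of the round $1$-handle against the $0$-handle of $B_{m,h}$, leaving exactly $S^1\times D^3$ with the original $2$-handle of $B_{m,h}$ now attached along the cable of the torus knot. Your version first writes $B_{m,h}=(S^1\times D^3)_B\cup(\text{$2$-handle on }L'')$, then commutes that $2$-handle past the round $1$-handle; the ``cancels the Block~B factor'' step is then literally the same handle cancellation the paper performs.

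One small point: when you justify that the round $1$-handle cancels Block~B by citing the paragraph preceding the lemma, that paragraph is stated only at the level of contact $3$-manifolds (it says the boundary returns to $(Y,\xi)$). What you actually need is the $4$-dimensional statement that $(S^1\times D^3)_A\sqcup(S^1\times D^3)_B$ together with the round $1$-handle is symplectically $S^1\times D^3$. This is of course immediate from the handle cancellation just described (round $1$-handle's $1$- and $2$-handle cancel against Block~B's $0$- and $1$-handle), and is exactly what the paper writes down, but you should say so rather than appeal to the boundary statement alone. Your closing remarks about matching signs are fine but not strictly needed: Lemma~\ref{cable} already produces both $\pm$-cables, so either choice is accounted for.
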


\begin{proof}
Using the notation from the discussion above we can think of $L'$ as a Legendrian realization of the core of a Heegard torus in $L(m^2,mh-1)=\partial B_{m,h}$. If we take the Legendrian torus knot in $S^1\times S^2=\partial (S^1\times D^3)$ from the proof of Lemma~\ref{cable}, then the discussion above shows that the new symplectic $4$-manifold $X$, obtained by adding a Weinstein round $1$-handle to $B_{m,h} \cup S^1\times D^3$, has boundary obtained by doing Legendrian surgery to the $(m,-h)$-cable of the torus knot. Moreover, recall a round $1$-handle is a union of a $1$-handle and a $2$-handle. The $2$-handle will cancel the $1$-handle in $B_{m,h}$ and the $1$-handle can be canceled against the $0$-handles in $B_{m,h}$. Thus all that is left of $B_{m,h}$ is its $2$-handle which will now be attached to the $(m,-h)$-cable of the torus knot. Thus $X$ is the same symplectic rational homology ball constructed in Lemma~\ref{cable}.
\end{proof}

We illustrate the previous proof in an example. In Figure~\ref{fig:extorussurgery} the left three black curves describe $S^1\times S^2$ with $r=3$, and according to Lemma~\ref{sfsons1s2} the red curve (if you ignore the right two black curves) is a $(3,-2)$-torus knot. 
\begin{figure}[htb]{
\begin{overpic}
{fig/extorussurgery}
\put(21, 112){$-3$}
\put(149, 112){$-3/2$}
\put(210, 140){$-4$}
\put(244, 112){$-2$}
\put(292, 112){$-5$}
\put(27, 67){$-1$}
\end{overpic}}
 \caption{A surgery diagram for $Y\left(-1;\frac 13,\frac 23,\frac 9{31}\right)$.}
  \label{fig:extorussurgery}
\end{figure}
The right two black curves describe a lens space $L(9,5)$. Combining the left black curves with the right black curves using the red curve is the result of a round $1$-handle attached to the $(3, -2)$-torus knot in $S^1\times S^2$ and the core of a Heegaard torus for $L(9,5)$. We easily see this is the Seifert fibered space $Y\left(-1;\frac 13,\frac 23,\frac 9{31}\right)$.

To see the symplectic rational homology ball filling of $Y\left(-1;\frac 13,\frac 23,\frac 9{31}\right)$ we first consider Figure~\ref{fig:pieces}.
\begin{figure}[htb]
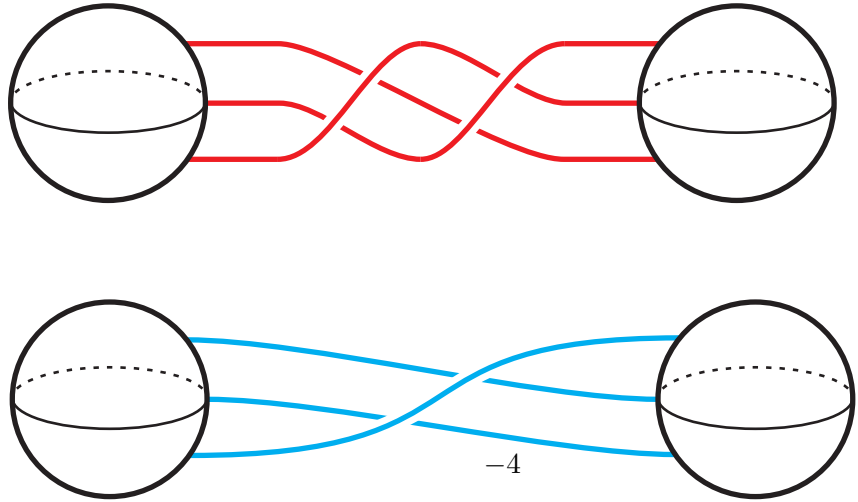
{
\begin{overpic}
{fig/pieces}
\put(180, 10){$-4$}
\end{overpic}}
 \caption{The top is $S^1\times S^2$ with the $(3,-2)$-torus knot shown in red. The bottom is the rational homology ball with boundary $L(9,5)$.}
  \label{fig:pieces}
\end{figure}
 Here we see $S^1\times S^2$ with the torus knot $(3,-2)$-shown in red (this is the left-most four knots shown in Figure~\ref{fig:extorussurgery} (but without the surgery performed on the red curve). In the figure we also see the rational homology ball bounded by $L(9,5)$. In Figure~\ref{fig:finaex} we see the result of attaching a round $1$-handle to $S^1\times D^3$ and the rational homology ball $L(9,5)$ bounds. 
\begin{figure}[htb]{
\begin{overpic}
{fig/realex}
\put(180, 10){$-4$}
\put(180, 167){$-3$}
\end{overpic}}
 \caption{The rational homology ball bounded by $Y\left(-1;\frac 13,\frac 23,\frac 9{31}\right)$.}
  \label{fig:rhbex}
\end{figure}
We note that in this figure the $-3$ framing on the red curve is with respect to the torus the red curve sits on. 

To understand this framing,  we note that in Figure~\ref{fig:extorussurgery} the $-4$ framing on the red curve is with respect to the torus framing the red curves sits on as a torus knot. Thus for this to be a Legendrian surgery the Legendrian knot will have Thurston-Bennequin invariant $-3$. When we add the round $1$-handle, the Legendrian $(3,-2)$-torus knot is connect summed with a Thurston-Bennequin invariant $0$ core of the lens space. This results, as $\tb(L\#L')=\tb(L)+\tb(L')+1$, in a Thurston-Bennequin invariant $-2$ curve on which Legendrian surgery is performed. Thus the framing is $-3$, with respect to the framing on the curve coming from the Heegaard torus in $S^1\times S^2.$

Now, if we cancel the red $2$-handle in Figure~\ref{fig:rhbex} (after sliding the blue $2$-handle over it $3$ times), we will obtain Figure~\ref{fig:finaex}. 
\begin{figure}[htb]{
\begin{overpic}
{fig/finaex}
\put(82, 13.5){$-3$}
\put(185, 1){$-31$}
\end{overpic}}
 \caption{The rational homology ball bounded by $Y\left(-1;\frac 13,\frac 23,\frac 9{31}\right)$.}
  \label{fig:finaex}
\end{figure}
This is clearly $-31$ surgery on the $(3,-10)$-cable of the $(3,-2)$-torus knot in $S^1\times S^2$ (note the cable coefficients are with respect to the Heegaard torus framing, with respect to the "Seifert framing" it is the $(3, -28)$-cable, but to agree with our discussion above, we need to use the Heegaard torus framing.) We end by noting that according to Gordon's result mentioned in the proof of Lemma~\ref{cable} we see that is the same as $-31/9$ surgery on the $(3,-2)$-torus knot in $S^1\times S^2$, which by Lemma~\ref{cable} is precisely $Y\left(-1;\frac 13,\frac 23,\frac 9{31}\right)$. We also mention that Golla and Starkston in \cite{GollaStarkston2024} provided some specific such examples with direct Kirby calculus methods.

\begin{remark}
We note that the reverse process of attaching a round $1$-handle is splitting the $4$-manifold along an $S^1\times D^2$, and the boundary will be an $e$-splitting of the boundary of the $4$-manifold. This $e$-splitting will result in $S^1\times S^2$ and $L(m^2,mh-1)$. 
\end{remark}

\section{Seifert fibered spaces with $e_0\leq -3$}\label{Mneg}
In this section we prove one of our main theorems, namely Theorem~\ref{main1}, concerning symplectic rational homology ball fillings of $Y(e_0;r_1,r_2,r_3)$ with $e_0\leq -3$. 

Recall from the introduction that any tight contact structure on these manifolds comes from Legendrian surgery on a Legendrian link realizing the link in Figure~\ref{fig:plumbing}. In Figure~\ref{fig:e0lm2} we give two other surgery presentations of the Seifert fibered space that we will need below. They are all related by slam dunk operations.

\begin{figure}[htb]
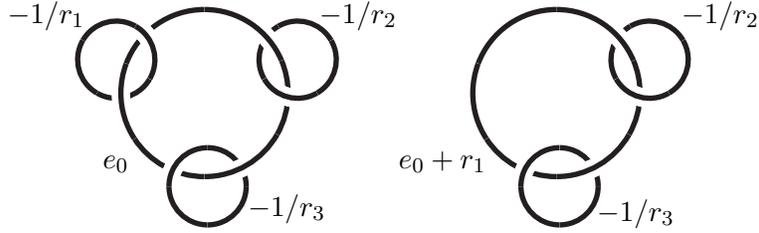
{
\begin{overpic}
{fig/e0lm2}
\put(-23, 80){$-1/r_1$}
\put(95, 80){$-1/r_2$}
\put(68, 7){$-1/r_3$}
\put(13, 25){$e_0$}
\put(232, 80){$-1/r_2$}
\put(200, 5){$-1/r_3$}
\put(125, 25){$e_0+r_1$}
\end{overpic}}
\caption{Two surgery diagrams for $Y(e_0;r_1,r_2,r_3)$.}
\label{fig:e0lm2}
\end{figure}

We begin with a preliminary result. Recall from Definition~\ref{m-consistent} that a contact structure on $Y(e_0;r_1,r_2,r_3)$ is called consistent if all the Legendrian knots in a Legendrian surgery diagram realizing Figure~\ref{fig:plumbing} for the contact structure are stabilized in the same way. If a contact structure is not consistent, we call it \dfn{inconsistent}. 

\begin{proposition}\label{inconsistent1}
If $\xi$ is an inconsistent contact structure on $Y=Y(e_0;r_1,r_2,r_3)$ with $e_0\leq -3$, then $(Y, \xi)$   is not filled by a symplectic rational homology ball. 
\end{proposition}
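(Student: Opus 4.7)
The strategy is to derive a contradiction from the hypothesis that an inconsistent $\xi$ admits a symplectic rational homology ball filling, by exploiting the Christian--Menke splitting theorem together with the characterization of lens-space fillings by rational homology balls. Suppose that $(X,\omega)$ is such a filling. After slam dunks, as in Figure~\ref{fig:e0lm2}, we may regard $\xi$ as obtained from three contact surgeries on Legendrian unknots linked to a Legendrian realization of the central $e_0$-framed unknot, each specified by a partially decorated path in the Farey graph. Inconsistency means that somewhere in this collection of decorated paths we have either a continued fraction block containing both signs or two adjacent blocks of opposite overall sign. By the discussion in Remark~\ref{splits}, either situation yields a mixed torus $T \subset (Y,\xi)$ with a nonempty exceptional set $E_T$; the corresponding $e$-splitting, for any $e \in E_T$, decomposes $(Y,\xi)$ as a disjoint union $(L,\xi_L) \sqcup (Y',\xi')$, where $L$ is a lens space (possibly $S^3$) and $(Y',\xi')$ admits a strictly shorter contact surgery presentation.

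Applying Theorem~\ref{splittingthm} produces a weak symplectic filling $(X',\omega')$ of $(L,\xi_L) \sqcup (Y',\xi')$. Since lens spaces are supported by planar open books, Remark~\ref{disconnected} forces the splitting $X' = X'_L \sqcup X'_{Y'}$. The Betti-number analysis of Remark~\ref{homologyofsplitting}, together with the fact that $X$ is a rational homology ball, now implies that exactly one of $X'_L$, $X'_{Y'}$ is itself a rational homology ball and the other is a rational homology $S^1$. I would then argue by case analysis. If $X'_L$ is the rational homology ball, then by Lisca's theorem recalled in the introduction, $L \cong L(p^2, pq-1)$ with $\xi_L$ contactomorphic to $\xi_{can}$; tracing the decorated Farey-graph path prescribed by the $e$-splitting against the rigid form of the partially decorated path realizing $\xi_{can}$ on $L(p^2, pq-1)$ yields a contradiction with the mixed-sign block that produced $T$. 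If instead $X'_{Y'}$ is the rational homology ball, I would induct on the number of Legendrian components in the surgery diagram: when $\xi'$ is inconsistent, the inductive hypothesis applied to $(Y',\xi')$ closes the argument; when $\xi'$ is consistent, the ``surplus'' of mixed-sign stabilizations present in $\xi$ must be concentrated in $\xi_L$, so that $X'_L$ is a rational homology $S^1$ filling a non-canonical tight contact structure on a lens space, which is excluded by the lens-space classification.

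The main obstacle is the Farey-graph bookkeeping in the first case: one must verify that the canonical contact structure on $L(p^2, pq-1)$, as seen through the $e$-splitting along $T$, cannot coexist with a mixed-sign continued fraction block in the original Legendrian surgery diagram. The base of the induction occurs when $(Y',\xi')$ already reduces to a lens space (or $S^3$), where the claim follows directly from the lens-space case discussed in the introduction and established in \cite{ChristianLi23, EtnyreRoy21, EtnyreTosun2023, GollaStarkston2022, Lisca08}. As a sanity check one may also invoke Proposition~\ref{canminimizes} and Lemma~\ref{lem: rhb}: any filling by a rational homology ball forces $\theta(\xi) = -2$, yet $\theta(\xi_{can}) < \theta(\xi)$ for $\xi$ not isotopic to $\pm \xi_{can}$, so the entire argument may be viewed as ruling out the possibility that $\theta(\xi) = -2 = \theta(\xi_{can})$ when $\xi$ is inconsistent.
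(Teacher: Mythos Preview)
Your setup through the Christian--Menke splitting is fine, and you correctly arrive at the dichotomy from Remark~\ref{homologyofsplitting}: one of $X'_L$, $X'_{Y'}$ is a rational homology ball and the other is a rational homology $S^1\times D^3$. But at this point you miss the one-line finish and instead embark on an elaborate case analysis that you do not complete.

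The key observation, which is what the paper uses, is purely topological: a rational homology $3$-sphere can never bound a rational homology $S^1\times D^3$ (compare $b_1$ and $b_3$ via Lefschetz duality and the long exact sequence of the pair). Both pieces $L$ and $Y'$ of the $e$-splitting are rational homology spheres --- $L$ is a lens space or $S^3$, and $Y'$ is either a connected sum of lens spaces or another small Seifert fibered space with $e_0\le -3$. Hence neither can serve as the boundary of the required rational homology $S^1\times D^3$, and the contradiction is immediate. No induction, no Lisca classification, no Farey bookkeeping is needed. The only subtlety is ruling out that one of the pieces is $S^1\times S^2$ (which \emph{does} bound $S^1\times D^3$); the paper dispatches this with Lemma~\ref{onlye0}, since $e_0\le -3\ne -1$. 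You never address this point.

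Your Case~1 is where the real gap lies. You try to argue that if $X'_L$ is the rational homology ball then $L$ cannot be $L(p^2,pq-1)$ with its canonical structure, via ``Farey-graph bookkeeping'' that you yourself flag as an unresolved obstacle. But this is attacking the wrong piece: there is no reason the lens space arising from the splitting could not carry $\xi_{can}$. The contradiction in this case comes from the \emph{other} side --- $X'_{Y'}$ would be a rational homology $S^1\times D^3$ with rational-homology-sphere boundary $Y'$. Your Case~2 fares better in spirit (you eventually invoke that a lens space cannot bound a rational homology $S^1$), but the induction and the appeal to the symplectic ``lens-space classification'' are unnecessary detours around the elementary homological fact. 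Finally, the $\theta$-invariant paragraph does not give an independent check: Proposition~\ref{canminimizes} only says $\theta(\xi_{can})<\theta(\xi)$, and for a generic $Y(e_0;r_1,r_2,r_3)$ with $e_0\le -3$ one has $\theta(\xi_{can})\ne -2$, so nothing prevents $\theta(\xi)=-2$ on those grounds alone.
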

\begin{proof}
Suppose $(Y(e_0;r_1,r_2,r_3),\xi)$ has a rational homology ball symplectic filling $X$. 
We show below that there is a mixed torus $T$ such that any $e$-splitting associated to $T$ will result in either (1) $S^3$ and the connected sum of three lens spaces, (2) $S^3$ and the connected sum of a lens space and another Seifert fibered space $Y(e_0;r_1', r'_2, r'_3)$ where two of the $r_i'$ agree with $r_i$, (3) a lens space and another Seifert fibered space as in (2), or (4) two lens spaces. 
 By Remark~\ref{disconnected} we see that Theorem~\ref{splittingthm} gives fillings $X_1$ and $X_2$ of the components of the $e$-splitting. Since both of the components of the $e$-splitting are rational homology spheres, a simple homology computation shows that neither can bound a rational homology $S^1$. Thus by Remark~\ref{homologyofsplitting} the filling $X$ could not have existed. 

We are assuming that $\xi$ is not consistent. The first way this can happen is if the central curve is stabilized both positively and negatively. Thus there is a Legendrian $L$ realizing the central curve such that $\xi$ is obtained by surgery on a Legendrian link where the central curve is $L'=S_+(S_-(L))$. If we consider the neighborhood $N$ of $L$, then Legendrian surgery on $L'$ will remove a neighborhood $N'$ of $L'$and glue in a solid torus $S$ with meridional slope $\tb(L')-1$. Thus we have a mixed torus $T$ in $N-N'$ with slope $\tb(L)-1$ and the slopes of the basic slices showing that $T$ is a mixed torus are $\tb(L)$ and $\tb(L')$. Let $S'$ be the solid torus in $Y(e_0;r_1,r_2,r_3)$ that $T$ bounds. 
We can easily see that the only slope in $E_T$ is $\infty$. Thus the $e$-splitting along $T$ will glue a solid torus with meridional slope $\infty$ to $S'$, which has meridional slope $\tb(L')-1$. This gives $S^3$. The other piece of the splitting is the result of removing $S'$ from $Y(e_0;r_1,r_2,r_3)$ and gluing in a solid torus with slope $\infty$. That is just removing $L'$ from the Legendrian surgery diagram. This results in a connected sum of three lens spaces. This gives Case~(1) above. 

The second way $\xi$ can be inconsistent is if the signs of the stabilizations along one of the components belonging to any of the legs are inconsistent. Thus, as discussed in the previous paragraph, we will have a mixed torus associated with this component, and the mixed torus will split the Seifert fibered space into $S^3$ and the result of removing this component from the surgery diagram. That is, it will be 
\begin{itemize}
\item the connected sum of a lens space and a Seifert fibered space, or
\item if the inconsistently stabilized knot is adjacent to the central curve, we could get the connected sum of
\begin{itemize}
\item two lens spaces or 
\item a lens space and $S^1\times S^2$. 
\end{itemize}
\end{itemize}
The last case cannot happen by Lemma~\ref{onlye0} since $e_0<-1$. The other two cases give Case~(2) or~(4) above. 

The third way $\xi$ can be inconsistent is if the stabilizations on each component of a leg are consistent but some of the components are stabilized differently. This means if we think of $\xi$ as coming from contact surgery on the link in Figure~\ref{fig:e0lm2}, then the contact structure on one of the solid tori, associated to the knot $K$, describing the contact surgery corresponds to a partially decorated path where the signs in a given continued fraction block are the same, but the signs between some of the continued fraction blocks differ. Thus we have a mixed torus $T$ at the juncture between two continued fraction blocks. As discussed in Remark~\ref{splits} we see that any $e$-splitting will consist of a lens space and 
a Seifert fibered space as described above. More specifically, the $e_0$ of the new Seifert fibered space is the same as the original Seifert fibered space. To see this, we need to know that the new $r_i'$ is between $0$ and $1$, which is equivalent to $-1/r'_i<-1$, but this follows since all the exceptional slopes $E_T$ are less than $-1$. This gives Case~(3) above. (We might have part of the splitting contain $S^1\times S^2$, but as above, this cannot happen unless $e_0=-1$.)

The last way that $\xi$ can be inconsistent is if the stabilization on the central curve and the stabilization on one of the legs, say the first leg, are different. In this case, consider the surgery diagram on the right-hand side of Figure~\ref{fig:e0lm2}. The contact surgery on the middle curve will now be given by a contact structure on a solid torus corresponding to a path whose first continued fraction block has one sign and all the others have the opposite sign. It is not hard to see that the $e$-splitting in this case yields either Case~(3) or Case~(4) above. (We might have part of the splitting contain $S^1\times S^2$, but as above, this cannot happen unless $e_0=-1$.)
\end{proof}

We next consider consistent contact structures. 
\begin{proposition}\label{secondpartof1}
If $\xi$ is a consistent contact structure on $Y=Y(e_0;r_1,r_2,r_3)$ with $e_0\leq -3$, then $(Y, \xi)$  admits a symplectic rational homology ball filling if and only if $Y$ belongs to one of the three families of Seifert fibered spaces depicted in Figure~\ref{fig:plumbing1}.
\end{proposition}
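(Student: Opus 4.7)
The plan is to reduce the claim to the known characterization of which small Seifert fibered spaces have their canonical Milnor fillable contact structure symplectically filled by a rational homology ball. For $e_0 \leq -3$, the plumbing graph in Figure~\ref{fig:plumbing} is negative definite, so $Y(e_0;r_1,r_2,r_3)$ arises as the oriented link of a normal surface singularity and carries a canonical Milnor fillable contact structure, unique up to isotopy and orientation.

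First I would argue that any consistent contact structure on $Y$ is, up to reversing the orientation of the plane field, isotopic to $\xi_{can}$. By the definition recorded in the introduction, $\xi_{can}$ is precisely the contact structure obtained by Legendrian surgery on a consistently stabilized realization of Figure~\ref{fig:plumbing}. The two possible overall signs yield two consistent contact structures which differ by conjugation, so up to this ambiguity there is a unique consistent $\xi$, and by \cite[Theorem~8.1]{BhupalOzbagci11} it coincides with the Milnor fillable contact structure on $Y$.

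For the forward implication, assume that $Y(e_0;r_1,r_2,r_3)\in\mathcal{QHB}$ with $e_0\leq -3$, that is, its plumbing graph belongs to one of the three families depicted in Figure~\ref{fig:plumbing1}. Then by the constructions of Stipsicz--Szab\'o--Wahl \cite{SSW08} and Park--Shin--Stipsicz \cite{ParkShinStipsicz13}, together with the identification in \cite{BhupalStipsicz11}, $(Y,\xi_{can})$ admits a symplectic rational homology ball filling, which in particular fills the consistent contact structure. For the converse, suppose $(Y,\xi)$ with $\xi$ consistent admits such a filling. Since $\xi$ has been identified with $\pm\xi_{can}$, the obstruction half of the if-and-only-if characterization of \cite{BhupalStipsicz11} forces the minimal good resolution graph of the singularity to belong to one of the ten families of \cite[Figure~1]{BhupalStipsicz11}, which restricted to $e_0\leq -3$ reduces to the three families in Figure~\ref{fig:plumbing1}.

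The only conceptual work is the identification of the consistent contact structure with the Milnor fillable $\xi_{can}$, which is handled by the definitions and \cite[Theorem~8.1]{BhupalOzbagci11}; once that is in place, both directions of the proposition are immediate from the cited characterization. In contrast to the previous proposition (Proposition~\ref{inconsistent1}), which requires the Christian--Menke splitting machinery to control the inconsistent cases, here there is no genuine obstacle to overcome.
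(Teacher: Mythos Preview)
Your proposal is correct and follows essentially the same approach as the paper: identify the consistent contact structure with the Milnor fillable $\xi_{can}$ via \cite[Theorem~8.1]{BhupalOzbagci11}, then invoke the known characterization from \cite{BhupalStipsicz11, SSW08, ParkShinStipsicz13} of when $(Y,\xi_{can})$ admits a symplectic rational homology ball filling, which for $e_0\leq -3$ reduces to the three families of Figure~\ref{fig:plumbing1}. The paper's proof is terser but makes exactly these two moves.
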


\begin{proof}
As discussed before the statement of Theorem~\ref{main1}, when $e_0 \leq -3$,  the consistent contact structure on $Y=Y(e_0;r_1,r_2,r_3)$ is the Milnor fillable contact structure $ \xi_{can}$, and also recall from the introduction that the contact $3$-manifold  $(Y, \xi_{can})$ admits a symplectic rational homology ball filling if and only if the minimal good resolution graph of the corresponding singularity (a negative definite star-shaped tree with three legs) belongs to one of the three infinite families shown in Figure~\ref{fig:plumbing1}. 
\end{proof}

The above two propositions prove Theorem~\ref{main1} that tells us, when $e_0\leq -3$, precisely which contact structures on $Y(e_0;r_1,r_2,r_3)$ admit symplectic rational homology ball fillings. 
\begin{proof}[Proof of Theorem~\ref{main1}]
Proposition~\ref{inconsistent1} tells us that an inconsistent contact structure cannot be filled by a symplectic rational homology ball and Proposition~\ref{secondpartof1} addresses the case of consistent contact structures. 
\end{proof}

We now turn to the proof of Theorem~\ref{lagslice} which says that when $e_0\leq -3$, certain Legendrian knots in $Y(e_0;r_1,r_2,r_3)$ that are Lagrangian slice in the symplectic filling obtained by attaching Weinstein $2$-handles to the Legendrian realization of Figure~\ref{fig:plumbing} where all the Legendrian unknots have been stabilized consistent, but are not Lagrangian slice in their rational homology ball fillings. 
\begin{proof}[Proof of Theorem~\ref{lagslice}]
let $\xi$ be a tight contact structure on $Y(e_0;r_1,r_2,r_3)$, with $e_0\leq -3$, obtained from a plumbing graph as in Figure~\ref{fig:plumbing}. The meridians to each of the $2$-handles in Figure~\ref{fig:plumbing} bound Lagrangian disks in the symplectic filling corresponding to the plumbing diagram since they are belt spheres to the Stein handles. But suppose that $L$ bounded a Lagrangian disk in a rational homology ball filling of $\xi$. Then we could remove a neighborhood of the Lagrangian disk to obtain a filling of the contact $3$-manifold described by removing the $2$-handle corresponding to the Legendrian knot. This would give a rational homology $S^1\times D^3$ filling of a connected sum of lens spaces or a lens space and a small Seifert fibered space with $e_0\leq -3$. Since that cannot happen, as discussed in Remark~\ref{homologyofsplitting}, we see that $L$ cannot be Lagrangian slice in the rational homology ball filling of $\xi$.
\end{proof}

\section{Seifert fibered spaces with $e_0=-2$}\label{Lnegm2}
In this section we will prove Theorem~\ref{main2} concerning symplectic rational homology ball fillings of $Y(-2;r_1,r_2,r_3)$ and its Corollary~\ref{possiblefillingsfore0m2}. We begin by observing that for any small Seifert fibered space $Y(-2;r_1,r_2,r_3)$ in $\mathcal{QHB}$, that is any small Seifert fibered space that belongs to one of the seven infinite families depicted in Figure~\ref{fig:plumbing2}, only the canonical contact structure has a symplectic rational homology ball filling. 

\begin{proposition}\label{posfillsm3}
If $\xi$ is a tight contact structure  on $Y=Y(-2;r_1,r_2,r_3) \in \mathcal{QHB}$,  then $(Y, \xi)$ admits a symplectic rational homology ball filling if and only if $\xi$ is contactomorphic to $\xi_{can}$. 
\end{proposition}
\begin{proof}
We know from \cite{BhupalStipsicz11} that the contact structure $\xi_{can}$ for any $Y\in \mathcal{QHB}$ admits a symplectic rational homology ball filling. If $\xi$ is any contact structure not equal to $\pm \xi_{can}$, then according to Proposition~\ref{canminimizes} we know that $\theta(\xi)>\theta(\xi_{can})=-2$ and thus by Lemma~\ref{lem: rhb} it cannot be symplectically filled by a rational homology ball. 
\end{proof}

We now consider the possible contact structures on an $L$-space of the form $Y(-2;r_1,r_2,r_3)$ that can be filled by a symplectic rational homology ball.

\begin{proposition}\label{posfillsm2}
If $Y=Y(-2;r_1,r_2,r_3)$ does not belong to $\mathcal{QHB}$, but it is an $L$-space, and $\xi$ is any contact structure on $Y$, so that $(Y, \xi)$ admits a symplectic rational homology ball filling, then $\xi$ must be consistent or mostly consistent.
\end{proposition}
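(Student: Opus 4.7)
The plan is to adapt the argument of Proposition~\ref{inconsistent1} (the $e_0\leq -3$ analogue) to the setting $e_0=-2$. Suppose for contradiction that $\xi$ is neither consistent nor mostly consistent but that $(Y,\xi)$ admits a symplectic rational homology ball filling $X$. Since $Y$ is an $L$-space, by \cite{Ghiggini08} (see also \cite{Tosun20}), $\xi$ arises from Legendrian surgery on a Legendrian realization of the plumbing diagram in Figure~\ref{fig:plumbing} with $e_0=-2$. Because the central unknot has framing $-2$, it admits a unique Legendrian realization with no stabilizations, so all inconsistency must be internal to one of the three legs. This produces two subcases, analogous to the second and third ``ways'' of Proposition~\ref{inconsistent1}: either (i) some Legendrian unknot in a leg is stabilized both positively and negatively, or (ii) two Legendrian unknots within the same leg are stabilized with opposite signs.

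In either subcase, the analysis of Proposition~\ref{inconsistent1} locates a mixed torus $T$ inside the contact surgery region of the affected leg. Applying Theorem~\ref{splittingthm} to $T$, for any $e\in E_T$ (determined by Remark~\ref{possE} or Remark~\ref{splits}), yields a weak symplectic filling $X'$ of the $e$-splitting of $(Y,\xi)$. This splitting decomposes $Y$ into a lens space $L$ (possibly $S^3$) together with a second $3$-manifold $Y'$, where $Y'$ is either a lens space, a connected sum of two lens spaces, or a small Seifert fibered space of the form $Y(-2;r'_1,r'_2,r'_3)$ in which two of the $r'_i$ coincide with the corresponding $r_i$ of $Y$. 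Since the lens space $L$ supports a planar open book \cite{Schoenenberger05}, Remark~\ref{disconnected} forces $X'=X_1\sqcup X_2$. By Remark~\ref{homologyofsplitting}, one of $X_1,X_2$ is a rational homology ball and the other is a rational homology $S^1$. However, a rational homology $S^1$ cannot bound a rational homology sphere: Lefschetz duality gives $H_1(X_i,\partial X_i;\mathbb{Q})\cong H^3(X_i;\mathbb{Q})=0$, and the long exact sequence of the pair $(X_i,\partial X_i)$ then forces $H_1(X_i;\mathbb{Q})=0$, contradicting that $X_i$ is a rational homology $S^1$. Thus the existence of $X$ is impossible as soon as both components of the $e$-splitting are rational homology spheres.

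So the argument reduces to verifying that $Y'$ is a rational homology sphere, and this is the main obstacle, since in the $e_0\leq-3$ case it was automatic. For any component of the splitting, one uses exactly as in Proposition~\ref{inconsistent1} that $e_0=-2\ne-1$ rules out $S^1\times S^2$ appearing---either as $Y'$ itself or as a summand in the connected-sum case---via Lemma~\ref{onlye0}, which requires $e_0\ge-1$ for any small Seifert fibered structure on $S^1\times S^2$. The remaining case is when $Y'=Y(-2;r'_1,r'_2,r'_3)$ has $r'_1+r'_2+r'_3=2$ and thus $b_1(Y')\ge1$; to exclude this, one tracks explicitly the shortened Farey path produced by the $e$-splitting and uses the constraint that $Y$ is an $L$-space to argue that no choice of $e\in E_T$ produces the degenerate equality. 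Once this rational-homology-sphere status is secured, the homological contradiction of the previous paragraph completes the proof.
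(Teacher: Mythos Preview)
Your overall strategy matches the paper's: locate a mixed torus in the inconsistent leg, apply Theorem~\ref{splittingthm}, and derive a homological contradiction via Remark~\ref{homologyofsplitting}. You also correctly identify the one genuinely new difficulty in passing from $e_0\leq -3$ to $e_0=-2$: the piece $Y'=Y(-2;r_1',r_2',r_3')$ produced by the $e$-splitting is no longer automatically a rational homology sphere, so your Lefschetz-duality argument (which is fine as stated) does not immediately apply.

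The gap is in how you close this case. You write that one ``tracks explicitly the shortened Farey path produced by the $e$-splitting and uses the constraint that $Y$ is an $L$-space to argue that no choice of $e\in E_T$ produces the degenerate equality'' $r_1'+r_2'+r_3'=2$. But you never carry out this tracking, and it is not clear that the $L$-space hypothesis on $Y$ alone forces every admissible $e$ to avoid this equality; at the very least this requires a genuine argument that is absent. As written, this is an assertion, not a proof.

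The paper sidesteps the issue entirely. Rather than arguing that $Y'$ is a rational homology sphere, it invokes Aceto \cite[Theorem~5.5]{Aceto2020}: no Seifert fibered space with three singular fibers can smoothly bound a rational homology $S^1\times D^3$, regardless of its first Betti number. This, together with the observation that none of the lens-space pieces can be $S^1\times S^2$ (Lemma~\ref{onlye0}), immediately rules out either component of the splitting bounding a rational homology $S^1$, and the contradiction with Remark~\ref{homologyofsplitting} follows. So the missing ingredient in your argument is precisely Aceto's theorem (or an honest proof of the claim you gesture at).
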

\begin{proof}
Given a contact structure $\xi$ on $Y$ that is not consistent or mostly consistent, one of the legs in the surgery diagram will contain Legendrian knots with opposite stabilizations and hence there will be a mixed torus in the solid torus corresponding to that leg. Arguing as in the proof of Proposition~\ref{inconsistent1} we see that an $e$-splitting of $Y$ will result in either a small Seifert fibered space with $e_0=-2$ and a lens space, or two lens spaces (and none of the mentioned lens spaces can be $S^1\times S^2$). Moreover, Aceto \cite[Theorem~5.5]{Aceto2020} proved that no Seifert fibered space with three singular fibers can smoothly  bound a homology $S^1\times D^3$ and we already observed that no lens spaces can bound such a space. Thus following Remark~\ref{homologyofsplitting} we see that $\xi$ cannot symplectically bound a rational homology ball. 
\end{proof}

Our second main theorem follows.
\begin{proof}[Proof of Theorem~\ref{main2}]
The statement of Theorem~\ref{main2} is precisely the combination of Proposition~\ref{posfillsm3} and Proposition~\ref{posfillsm2}. 
\end{proof}
Finally, we give the proof of Corollary~\ref{possiblefillingsfore0m2}: if  $Y=Y(-2;r_1,r_2,r_3)$ is an $L$-space, then, up to isomorphism,  there are at most four contact structures  $\xi$ on $Y$ so that $(Y, \xi)$ admits a  symplectic rational homology ball filling. 

\begin{proof}[Proof of Corollary~\ref{possiblefillingsfore0m2}]
From Proposition 6.2 we know that any contact structure 
 $\xi$ on $Y$ that admits a symplectic rational homology ball filling is either consistent or mostly consistent. Equivalently, along the three legs the stabilizations are either all of the same sign, or two have one sign while the third has the opposite sign. Thus, up to isomorphism,  there are at most four stabilization patterns in total: the consistent pattern (all legs stabilized the same way) and the three mostly consistent patterns determined by which leg carries the opposite sign.
\end{proof}

\section{Seifert fibered spaces with $e_0\geq 0$}\label{pos}
In this section we prove Theorem~\ref{main3} concerning symplectic fillings of small Seifert fibered spaces $Y(e_0;r_1,r_2,r_3)$ with $e_0\geq 0$  and its corollary stated in the introduction. Recall that Theorem~\ref{main3} asserts the following:
if a contact structure on $Y(e_0;r_1,r_2,r_3)$ with $e_0\geq 0$ is filled by a symplectic rational homology ball, then it must be mostly consistent. In particular, if the contact structure is consistent, then it has no symplectic rational homology ball filling.

\begin{proof}[Proof of Theorem~\ref{main3}]
We first note that if one of the unknots in the Legendrian surgery description of $Y(e_0;r_1,r_2,r_3)$ with $e_0\geq 0$, shown at the bottom of Figure~\ref{fig:e0gqe0} is stabilized inconsistently, or one of the legs is stabilized inconsistently, then we can argue as in the proof of Proposition~\ref{inconsistent1} to show that there is no symplectic rational homology ball filling. The only difference, in this case, is to note that if one of the parts of the $e$-splitting is still a Seifert fibered space with three singular fibers, then $e_0$ is still greater than or equal to $0$ and hence cannot bound a rational homology $S^1$ and if there are only two singular fibers, then we either have a lens space, which cannot bound a rational homology $S^1$, or $S^1\times S^2$ which does bound $S^1\times D^3$. In this last case, the filling will come from attaching a symplectic round $1$-handle to $S^1\times D^3$ union a rational homology ball bounding a lens space. The round $1$-handle is attached to an $(a,b)$-torus knot in $S^1\times S^2$ and the core of a Heegaard torus for the lens space. When the round $1$-handle is attached, the $3$-manifold changes by removing a neighborhood of the $(a,b)$-torus knot and replacing it with the solid torus obtained by removing a neighborhood of the core of the Heegaard torus in the lens space. We can see in the proof of Lemma~\ref{cable}, see the discussion in Remark~\ref{exoffillings}, that for this construction to yield a Seifert fibered space with $e_0>-1$, the $(a,b)$-torus knot cannot be stabilized; moreover since the contact structures on lens space that have rational homology ball fillings are universally tight, the latter solid torus is universally tight. Therefore, the leg in the surgery diagram for this solid torus is, in fact, consistent, contradicting our assumption that it was the inconsistent leg. It follows that if any leg is inconsistent, there is no symplectic rational homology ball filling. Hence, for $e_0\ge 0$, any contact structure on $Y(e_0;r_1,r_2,r_3)$ that admits such a filling must be either consistent or mostly consistent.

In the remainder of the proof, we rule out the consistent case. Each leg is stabilized consistently, and all three legs are stabilized in the same way. In this setting, we exhibit a mixed torus and apply the Christian–Menke splitting argument, from which it follows that no symplectic rational homology ball filling can exist.   To this end, we note that if we remove one of the legs from the surgery diagram, we will obtain a lens space (that is, a Seifert fibered space with two singular fibers), and adding the leg back into the diagram will be doing surgery on some torus knot in the lens space. Specifically, in Figure~\ref{fig:s1s2} if we replace the labels $r$ and $-r$ with $-1/s_i$ and $-1/s_j$, for some $i,j\in\{1,2,3\}$, we will obtain the lens space, and $F$ will represent the torus knot that must be surgered to recover our original Seifert fibered space. To be specific, we can assume that we removed the third leg, so our lens space is given by $-1/s_1$ and $-1/s_2$ surgeries in the figure. 

We now consider the contact structure on this lens space. It will be given by a minimal path in the Farey graph that starts at $-1/s_1$ and moves clockwise to $1/s_2$ (note the flip in the sign from $-1/s_2$ to $1/s_2$, this is for the same reason we have a flip in sign in the proof of Lemma~\ref{sfsons1s2}) with signs on all but the first and last edge. We claim that all the signs on the edges from $-1/s_1$ to $0$ are all the same and opposite from those going from $0$ to $1/s_2$. (We note that in the case when $s_1=n>1$ there is only one edge from $-1/s_1$ to $0$ we will deal with this case below and so for now assume that this is not the case.) To see why this claim is true, note that when thinking of these tori as coming from surgery on a Legendrian link in $S^1\times S^2$ they both have lower meridians, but when thinking about them as describing the lens space, one has a lower meridian and the other has an upper meridian. When switching between upper and lower meridians, the co-orientation on the boundary of the torus changes. 
Thus, since both legs were stabilized the same way, the solid tori corresponding to those legs have basic slices of the same sign. However, when gluing the two solid tori together, we consider one of them to have an upper meridian and hence the signs of the basic slices in the solid torus change sign. 
Thus, we see a mixed torus in the middle of our lens space and the third leg is obtained by doing surgery on a torus knot sitting on this torus. 

The knot $F$ on which we must {perform surgery}  to get our original Seifert fibered space back is a Legendrian divide on the convex torus with dividing slope $0$. We note that since $s_3\in(0,1)$ we know $-1/s_3<-1$ and so we must stabilize $F$ at least once to perform the correct surgery. If we consider a ruling curve $F'$ on the convex torus for slope $1$ in our lens space (note from our description of the lens space above this exists), then it will be a stabilization of $F$.  The sign of the stabilization is determined by the sign on the edge from $0$ to $1$, and one can check that it will be opposite to that sign. So this is the stabilization of $F$ that needs to be done to perform the contact surgery to obtain our original contact structure. Thus in our original contact structure, we have a mixed torus of slope $0$ with a basic slice of slope $-1/n$ and $0$ on one side and $0$ and $1$ on the other side. Thus the possible $e$-splittings along this torus can have meridional slopes $\{\infty, -1/k\}$ where $k=1,\ldots n-1$. The $e$-splitting will excise the first leg and replace it with a solid torus with meridional slope $e$, and also cap off the solid torus given by the first leg with another solid torus with meridional slope $e$. If $e=1/k$ for $k>1$, then the first manifold will be a small Seifert fibered space with $e_0\geq 0$ again, and the second manifold will be a lens space. As argued above, we cannot attach a round $1$-handle to any fillings of these spaces to obtain a rational homology ball. If $e=-1$ or $\infty$ then both manifolds coming from the splitting will be a lens space, but if one of these is not $S^1\times S^2$ filled by $S^1\times D^3$ then, as noted above, the original contact manifold cannot have a rational homology ball filling. Only the first manifold in the splitting could be $S^1\times S^2$, and in this case, since the signs of the stabilizations are the same, we would get the overtwisted contact structure on $S^1\times S^2$ by Lemma~\ref{lem: framing}, and hence it cannot be symplectically fillable. Thus there cannot be a symplectic rational homology ball filling of the original contact manifold. 


We are left to consider the case when $s_1=n>1$. Recall we assumed this so that the path from $-1/s_1$ to $0$ describing a contact structure on a solid torus would have a sign on the edge adjacent to $0$. When $-1/s_1=-1/n$ this will not be the case. But then the path from $-1/s_1$ to $0$ is a single edge with no sign and corresponds to the unique tight contact structure on a solid torus with longitudinal dividing curves. Thus this solid torus is a neighborhood of a Legendrian curve. We can stabilize this Legendrian curve with either sign and the complement of this stabilized curve will be described by a path in the Farey graph from $-1/(n+1)$ to $0$ with a sign depending on which stabilization is done. Thus we can still arrange for a mixed torus with dividing slope $0$ and carry out the argument as above. 
\end{proof}

We are now ready to prove Corollary~\ref{possfillse0pos}. Recall that this corollary states the following:  when $e_0\geq 0$, there are at most three contact structures (up to isomorphism) on $Y(e_0;r_1,r_2,r_3)$ that can admit symplectic rational homology ball fillings. 

\begin{proof}[Corollary~\ref{possfillse0pos}]
The proof is almost identical to the proof of Corollary~\ref{possiblefillingsfore0m2}, except the fact that the consistent contact structure cannot have a rational homology ball filling in this case by Theorem~\ref{main3}. 
\end{proof}

\section{Seifert fibered spaces with $e_0=-1$}\label{Lnegm1}
In this section we  establish an obstruction for a small Seifert fibered space with $e_0=-1$ to carry a contact structure that admits a symplectic rational homology ball filling and  prove results restricting certain Brieskorn spheres from admitting such fillings. The latter are obtained by analyzing Dehn surgeries on torus knots.

\subsection{Restricting rational homology ball fillings}
In this subsection, we prove Theorem~\ref{Lspace} that says 
the Seifert fibered space $Y(-1; r_1, r_2, r_3)$ where $1>r_1\geq r_2\geq r_3>0$ admits no 
 tight contact structure that is symplectically filled by a rational homology ball whenever $r_1+r_2+r_3>1>r_1+r_2$.

\begin{proof}[Proof of Theorem~\ref{Lspace}]
The proof we present here is modeled on a result of Lecuona-Lisca \cite[Theorem~$1.4.$]{LL2011}, who prove a stronger result under stronger assumptions. 

Suppose that $Y$ is filled by a symplectic rational homology ball, say $X$, which of course has $b_2^+(X)=0$. Now consider the orientation reversal manifold $-Y(-1; r_1, r_2, r_3)=Y(-2, s_1, s_2, s_3)$, where a simply Kirby calculus argument yields that $s_i=1-r_{4-i}$, $i=1, 2, 3$. Note that the Seifert invariants $s_i$ satisfy $1>s_1\geq s_2\geq s_3$ as $1>r_1\geq r_2\geq r_3$. Now the assumption $r_1+r_2+r_3>1$ immediately implies that the star-shaped plumbing diagram in Figure~\ref{fig:plumbing} for $Y(-2, s_1, s_2, s_3)$ has negative definite intersection form. Let $X'$ be the plumbed $4$-manifold given in the diagram. With this in place, we form the smooth closed 4-manifold $Z=X\cup X'$ which has a negative definite intersection form. In particular, by Donaldson's theorem \cite{Donaldson87}, $Z$ has a diagonalizable intersection form. On the other hand, the assumption $r_1+r_2<1$ says $s_2+s_3=(1-r_2)+(1-r_1)>1$ which is enough to conclude, by \cite[Lemma~$3.3$]{LL2011}, that the intersection form of $X$ does not embed in a standard diagonal lattice. Thus, $Y$ does not admit a tight structure that is filled by a symplectic rational homology ball. 
\end{proof}

Under the further assumption of $Y$ being an L-space, Lecuona-Lisca \cite[Theorem~1.4]{LL2011} prove that $Y$ has no symplectic fillings at all. The key is that any such symplectic filling will be negative definite by \cite[Theorem~1.4]{OzsvathSzabo04a}.   

\subsection{Surgeries on torus knots}\label{surgeryontorus}
For a small Seifert fibered space $Y=Y(-1; r_1, r_2, r_3)$,  being an $L$-space plays an important role in the classification of tight contact structures on $Y$.  So we briefly review how one can determine whether $Y$ is an L-space. It follows from \cite[Theorem~1.4.]{LiscaStipsicz07} that a small Seifert fibered space $Y=Y(-1; r_1, r_2, r_3)$ is an L-space if $r_1+r_2+r_3\geq \frac{3}{2}$ or $r_1+r_2\geq 1$, and it is not an L-space if $r_1+r_2+r_3<1$. When $1<r_1+r_2+r_3<\frac{3}{2}$ either could happen and in general there is no closed formula that can determine if $Y$ is an L-space or not.

We are not aware of any non-$L$-space small Seifert fibered space, regardless of its $e_0$ value, with a symplectic rational homology ball filling. It is possible that there are no such examples. Our goal here is to provide some evidence for this through Theorem~\ref{nonLspace}. To this end, we consider the $3$-manifold $S^3_{T_{p,q}}(r)$ --- the result of smooth $r$-surgery on the positive torus knot $T_{p,q}$ for any rational number $r<0$, and where we adopt the convention that $p, q$ are relatively prime integers with $0<p<q$. This $3$-manifold is the small Seifert fibered space $Y(-1; \frac{p-q^*}{p}, \frac{q-p^*}{q}, \frac{1}{pq-r})$ where $p^*$ and $q^*$ are multiplicative inverses of $p$ and $q$, modulo $q$ and $p$, respectively. Clearly,  the sum of the Seifert invariants here is less than one, and hence this manifold is not an L-space. In Theorem~\ref{nonLspace}, we will show that for any $r$, an integer less than $-1$ if $\{p,q\}=\{2,3\}$, and otherwise a negative integer or a rational number of the form $-\frac{1}{n}$ for some positive integer $n$, the $3$-manifold $S^3_{T_{p,q}}(r)$ does not bound a symplectic rational homology ball. 

As an immediate corollary,  we obtain Theorem~\ref{brieskorn}, which says that for $(p,q)\neq (2,3)$, no Brieskorn homology sphere of the form $\Sigma(p,q, pqn+1)$  bounds a symplectic rational homology ball. 

\begin{proof}[Proof of Theorem~\ref{brieskorn}]
When $r= -\frac{1}{n}$ for some integer $n$, the manifold $S^3_{T_{p,q}}(r)$ will be the Brieskorn homology sphere $\Sigma(p,q,pqn+1)$, thus Theorem~\ref{nonLspace} rules out symplectic rational homology ball fillings of any contact structure on $\Sigma(p,q,pqn+1)$ for $(p,q)\neq (2,3)$.
\end{proof}

To prove Theorem~\ref{nonLspace} we note a classification result for surgeries on torus knots (see also \cite[Theorem~$1.9$]{MarkTosunT18} for when $r=-\frac{1}{n}$ and $(p,q)=(2,3)$).

\begin{theorem}[Etnyre-Min-Tosun-Varvarezos 2024, \cite{EtnyreMinTosunPre}\footnote{More general results will appear in the forthcoming \cite{EtnyreMinTosunPre}, but this result follows from standard techniques in the field, details of which can be found in \cite{EMTVnote} .}]\label{EMT}
Let $r$ be any negative rational number with continued fraction expansion as $r=[-a_0, -a_1, \cdots, -a_k]$ where $a_0\geq 1, a_i\geq 2,$ for $i>0$. Then, up to isotopy, there exactly $(pq-p-q+a_0)(a_1-1)\cdots(a_k-1)$ symplectically fillable contact structures on $S^3_{T_{p,q}}(r)$. 
\end{theorem}
        
We now make some preparations for proving Theorem~\ref{nonLspace}. Initially, we will keep the discussion, in terms of $p,q$ and $r$ values, as general as possible. 

By Theorem~\ref{EMT} we know the exact number of fillable contact structures on $S^3_{T_{p,q}}(r)$. 
Negative contact $(-pq+p+q+r)$-surgery on the Legendrian positive torus knot $L_{p,q}$ with $tb(L_{p,q})=pq-p-q$, will result in the smooth manifold  $S^3_{T_{p,q}}(r)$ with a contact structure depending on the particular contact surgery performed (recall that the contact surgery is determined by a choice of tight contact structure on the surgery torus). There is a standard procedure (see \cite[Page 5]{DingGeigesStipsicz04}) to convert this contact negative rational surgery to a sequence of Legendrian surgeries on an appropriately stabilized $L_{p,q}$ and its push-offs which might include further stabilizations.

However, for our arguments below it will be more convenient to use a slightly different but equivalent description of this Legendrian surgery. Namely, using a sequence of (reverse) slam dunk operations from Figure~\ref{fig:sd} we can convert smooth $r$-surgery on $T_{p,q}$ into surgery on a chain of knots as in Figure~\ref{EMT1} where $K=K_0$ is the torus knot $T_{p,q}$ and $K_i$ are the unknots for $i=1, \cdots, k$, and $r=[-a_0,-a_1,\ldots, -a_k]$.
\begin{figure}[htb]
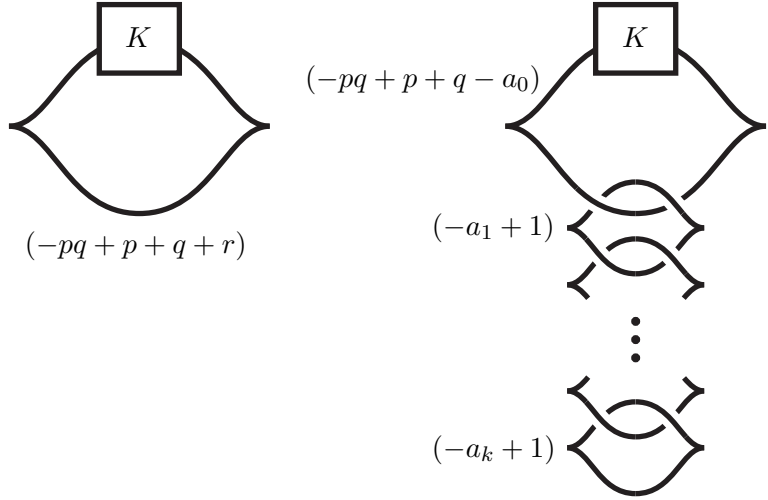
{
\begin{overpic}
{fig/Chain}
\put(44, 170){$K$}
\put(232, 170){$K$}
\put(6,92){$(-pq+p+q+r)$}
\put(112, 155){$(-pq+p+q-a_0)$}
\put(160, 99){$(-a_1+1)$}
\put(160, 15){$(-a_k+1)$}
\end{overpic}}
\caption{Contact surgery diagram for smooth $r$-surgery on the maximal Thurston-Bennequin invariant positive $(p,q)$-torus knot is shown on the left, and equivalent Legendrian surgery diagram is shown on the right.}
\label{EMT1}
\end{figure}
With this, contact  $(-pq+p+q+r)$-surgery along the original Legendrian knot $L_{p,q}$ can be described as Legendrian surgery along the link $L_0\cup L_1\cup \cdots \cup L_k$ where

\begin{itemize}
\item $L_0$ is the Legendrian knot represented by $L_{p,q}$ with $pq-p-q+a_1-1$ stabilizations, 

\item for $i\geq 1$, $L_{i}$ is the Legendrian unknot with $tb(L_i)=-1$ and $a_i-2$ stabilizations. 
\end{itemize} 

Let $(X_{p,q,r}, J_{p,q,r})$ denote the Stein 2-handlebody described by the Legendrian surgery diagram in Figure~\ref{EMT1}. 
The possible choices of stabilizations mentioned in the above construction give $(pq-p-q+a_0)(a_1-1)\cdots(a_k-1)$ Stein fillable and pairwise distinct contact structures, \cite{LiscaMatic97}. This is precisely the number of contact structures predicted in Theorem~\ref{EMT} and thus all contact structures on $S^3_{T_{p,q}}(r)$ for $r<0$ can be achieved via Legendrian surgery on the appropriate Legendrian realization of Figure~\ref{EMT1}. We call a contact structure \dfn{consistent} if all the stabilizations of the Legendrian link have the same sign. Otherwise we call it \dfn{inconsistent}. 
We next state our first preliminary result.   

\begin{proposition}\label{rednumb}
If $\xi$ is an inconsistent contact structure on $S^3_{T_{p,q}}(r)$, then it is not symplectically filled by a rational homology ball.  
\end{proposition}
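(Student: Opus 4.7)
The plan is to mirror the argument used in the proof of Proposition~\ref{inconsistent1}. Starting from the assumption that $\xi$ is inconsistent, I would locate a mixed torus $T$ inside $(S^3_{T_{p,q}}(r),\xi)$, apply the Christian--Menke splitting theorem (Theorem~\ref{splittingthm}) to any hypothetical symplectic rational homology ball filling $X$, and use Remark~\ref{homologyofsplitting} to derive a contradiction.

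Concretely, I would realize $\xi$ as contact $(-pq+p+q+r)$-surgery on the maximal Thurston--Bennequin Legendrian torus knot $L_{p,q}\subset(S^3,\xi_{std})$ and translate the Legendrian surgery diagram of Figure~\ref{EMT1} into a partially decorated minimal path $P$ in the Farey graph from $r+\tb(L_{p,q})$ clockwise to $\tb(L_{p,q})$. Under the dictionary of Section~\ref{ctstronsolidtori}, the continued fraction blocks of $P$ correspond to the Legendrian components $L_0,L_1,\dots,L_k$, and the $\pm$-sign of each stabilization becomes the $\pm$-decoration of the corresponding edge. Inconsistency of $\xi$ then means exactly one of the following holds: (a) some block of $P$ contains edges of both signs (i.e.\ some $L_i$ has mixed stabilizations), or (b) every block is monochromatic but two of them have opposite signs, in which case two adjacent blocks carry opposite decorations. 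In either case Remark~\ref{splits} produces a mixed torus $T$ and pins down the exceptional set $E_T$.

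For any $e\in E_T$, Remark~\ref{splits} then describes the $e$-splitting: when the mixed block or transition is not adjacent to the meridian, the splitting yields a lens space together with the result of some contact $(r')$-surgery on $L_{p,q}$ where $r'<0$ has a strictly shorter continued fraction than $r$; when it is the block adjacent to the meridian, the splitting yields $S^3$ together with either $S^3$ or a shorter contact surgery on $L_{p,q}$. In every case both components of the $e$-splitting are rational homology spheres, because lens spaces, $S^3$, and negative rational surgeries on knots in $S^3$ are all rational homology spheres. Moreover, one component is always a lens space or $S^3$, hence supported by a planar open book, so Remark~\ref{disconnected} forces the weak filling $X'$ produced by Theorem~\ref{splittingthm} to split as $X'=X_1\sqcup X_2$.

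If $X$ were a rational homology ball, Remark~\ref{homologyofsplitting} would force one of $X_1,X_2$ to be a rational homology $S^1$. However, a short Poincar\'e--Lefschetz duality computation (using $H_{\ast}(X_i,\partial X_i;\mathbb{Q})\cong H^{4-\ast}(X_i;\mathbb{Q})$ in the long exact sequence of the pair) shows that any rational homology $S^1$ with nonempty boundary must have $b_1(\partial)\geq 1$, contradicting the fact that both boundary components of the splitting are rational homology spheres. The main technical hurdle I anticipate is careful bookkeeping at the boundary of the Farey-graph path---remembering that the partial decoration leaves the edge adjacent to the meridian undecorated, and checking the border cases $k=0$ and when the mixed block is the initial one---to make sure the $e$-splitting really produces the two rational homology sphere pieces claimed above.
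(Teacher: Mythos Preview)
Your proposal is correct and follows essentially the same route as the paper's proof: locate a mixed torus from the inconsistency, apply Theorem~\ref{splittingthm}, observe that both pieces of the $e$-splitting are rational homology spheres (a lens space or $S^3$ on one side, a shorter negative surgery on $T_{p,q}$ or $S^3$ on the other), and invoke Remark~\ref{homologyofsplitting}. The paper compresses your case analysis into a single sentence and cites the proof of Theorem~\ref{main3} for the fact that the surgered piece cannot bound a rational homology $S^1$, whereas you supply the Poincar\'e--Lefschetz argument directly; both are justifying the same ``simple homology computation'' already noted in the proof of Proposition~\ref{inconsistent1}.
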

  
\begin{proof}
The proof follows the outline of the proof of Theorem~\ref{main1}. If $\xi$ is an inconsistent contact structure then there is a mixed torus in $(S^3_{T_{p,q}}(r),\xi)$.  We can use Theorem~\ref{splittingthm} to split any symplectic filling $X$ of $(S^3_{T_{p,q}}(r),\xi)$ into a symplectic filling of $(S^3_{T_{p,q}}(r'),\xi)$ union a lens space, for some $r'<0$, and $X$ can be recovered by attaching a round $1$-handle. But since $(S^3_{T_{p,q}}(r'),\xi)$ cannot be filled by a rational homology $S^1$ (see the proof of Theorem~\ref{main3}) and neither can a lens space we see from Remark~\ref{homologyofsplitting} that $X$ could not have been a rational homology ball. 
\end{proof}

Thus when proving Theorem~\ref{nonLspace} we only need to focus on the consistent contact structure $\xi^{p,q}_{can}$.

\begin{proof}[Proof of Theorem~\ref{nonLspace}] 
We will show that $\theta(\xi^{p,q}_{can}) \neq -2$. Thus by Lemma~\ref{lem: rhb}, $\xi^{p,q}_{can}$ cannot be symplectically filled by a rational homology ball.

We first gather certain smooth and symplectic data of the Stein manifold $X_{p,q,r}$ described in Figure~\ref{EMT1}.
It is easy to see $\chi(X_{p,q,r})=k+2$ and $\sigma(X_{p,q,r})=-k-1$ where recall $k+1$ is the number of the entries in the continued fraction for $r$. The Euler characteristic calculation is straightforward from the diagram. As for proving $X_{p,q,r}$ is negative definite, we recall that $S^3_{T_{p,q}}(r)=Y(-1; \frac{p-q^*}{p}, \frac{q-p^*}{q}, \frac{1}{pq-r})$, and so we have 
\[
e(S^3_{T_{p,q}}(r)) =1-\left(\frac{p-q^*}{p}+ \frac{q-p^*}{q}+\frac{1}{pq-r}\right)>0.
\] 
where $e$ denotes the rational Euler number. It follows that $b^+_2(X_{p,q,r})=0$, by \cite[Theorem~5.2]{neumannraymond}. 

We now calculate $c_1^2(X_{p,q,r}, J_{p,q,r})$. For this we first note that the intersection matrix $I_{X_{p,q, r}}$  of $X_{p,q,r}$ is   

$$I_{X_{p,q, r}}=\left[\begin{array}{cccccc}
-a_0 & 1 & & & & \\
1 & -a_1 & 1 & &  &  \\
& 1 &  & & & \\
& &  & \ddots & 1 \\
& & &  1& -a_{k-1} &1 \\
&  & & & 1 & -a_k
\end{array}\right] $$

 We also need the rotation vector: ${\bf r} _{p,q}=(r_1(L_0), \cdots ,r_k(L_k))^T$ where $r_i(L_i)$ is the rotation number of the Legendrian knot $L_i$. The rotation vector for $\xi^{p,q}_{can}$ can be  given as ${\bf r}_{can}=(pq-p-q+a_0-1, a_1-2, \cdots, a_k-2)^T$ and
\begin{equation}\label{hdata}
c_1^2(X_{p,q,r}, J^{can}_{p,q,r})={\bf r}_{can}^T I_{X_{p,q, r}}^{-1} {\bf }{\bf r}_{can}.
\end{equation}

We complete the proof in two cases.
\smallskip

\noindent
{\bf Case 1:} (Smooth $-\frac{1}{n}$-surgery on $T_{p,q}$ for $n\geq 1$.)  Note that $-\frac{1}{n}=[-1, -2, \cdots, -2]$ where the length of the continued fraction block is $n$ (and to be consistent with the initial notation above this says $k+1=n$).  We implement the formula in Equation~\ref{hdata}. Note that ${\bf r}_{can}=(pq-p-q, 0, \cdots, 0)^T$. Using this we quickly solve the linear system $I_{X_{p,q, r}} {\bf x}={\bf r}_{can}$ for ${\bf x}=[x_1, \cdots, x_n]$ and notice that $c_1^2(X_{p,q,r}, J^{can}_{p,q,r})$ is computed  as the dot product of ${\bf x}$ and ${\bf r}_{can}$.
We find that $x_1=-n(pq-p-q), x_2=-(n-1)(pq-p-q),$ and the rest of the $x_i$ are easily computed from this. So $c_1^2(X_{p,q,r}, J^{can}_{p,q,r})=-n(pq-p-q)^2$, and as above $\chi(X_{p,q,r})=n+1$ and $\sigma(X_{p,q,r})=-n$. Thus,  
\begin{align*}
\theta(\xi^{p,q}_{can})&=c_1^2(X_{p,q,r}, J^{can}_{p,q,r})-2\chi(X_{p,q,r})-3\sigma(X_{p,q,r})\\ &=-n(pq-p-q)^2+n-2,
\end{align*}
and we have \[\theta(\xi^{p,q}_{can})=-2 \text{ if and only if } (p,q)=(2,3).\]

When $(p, q)\neq (2,3)$, then $\theta(\xi^{p,q}_{can})<-2$. So, it never arises as the boundary of a symplectic rational homology ball. 
\begin{remark}
It is interesting to note that if we consider the contact structure $\xi'$ obtained from stabilizing $L_1$ positively $\frac{pq-p-q+1}{2}$ times and negatively $\frac{pq-p-q-1}{2}$ times, then we get that $c_1^2=-n$ and $\theta(\xi')=-2$. Of course, $\xi'$ is inconsistent, and hence Proposition~\ref{rednumb} rules out $\xi'$ to be symplectically filled by a rational homology ball.  Thus we see that to rule out rational homology ball fillings we need to use both the technique of splitting fillings along mixed tori and computations of the $\theta$-invariant. 
\end{remark}

\noindent
{\bf Case 2:} (Smooth $r=-n$-surgery on $T_{p,q}$ for $n\geq 1$.) There is only one component in the Legendrian surgery diagram which has $r_1(K)=pq-p-q+n-1$, and so $c_1^2(X_{p,q,r}, J^{can}_{p,q,r})=-\frac{(pq-p-q+n-1)^2}{n}$. We also have $\chi(X_{p,q,r})=2$ and $\sigma(X_{p,q,r})=-1$. Thus, 
\[
\theta(\xi^{p,q}_{can})=-\frac{(pq-p-q+n-1)^2}{n}-1.
\] 
Similar to above we note that $\theta(\xi^{p,q}_{can})=-2$ if and only if $(p,q)=(2,3)$ and $n=1$. This completes the proof.
\end{proof}

\section{Spherical $3$-manifolds}\label{sphericalsec}
In this section we will determine precisely which spherical $3$-manifolds can bound symplectic rational homology balls. A theorem of Choe and Park \cite{ChoePark2021} says that a spherical $3$-manifold $Y$ bounds a smooth rational homology ball if and only if $Y$ or $-Y$ is homeomorphic to one of the following manifolds: 
\begin{enumerate}
\item $L(p, q)$ such that $p/q \in\mathcal{R}$,
\item\label{2} $D(p, q)$ such that $(p-q)/q' \in\mathcal{R}$,
\item\label{3} $T_3$, $T_{27}$ and $I_{49}$, 
\end{enumerate}
where $p$ and $q$ are relatively prime integers such that $0 < q < p$, and $0 < q' < p - q$ is
the reduction of $q$ modulo $p - q$. So we will only consider these $3$-manifolds when looking for spherical $3$-manifolds bounding symplectic rational homology balls. 

\begin{remark} \label{rem: cond} The condition $(p-q)/q' \in\mathcal{R}$  in Item~\eqref{2} comes from Lecuona's work \cite{Lecuona2019} combined with the work of Lisca \cite{Lisca2007} as follows. As a corollary to \cite[Proposition 3.1]{Lecuona2019}, the spherical $3$-manifold $D(p,q)$ is rational homology cobordant to the lens space $L(p-q, q)$ and according to \cite[Theorem 1.2]{Lisca2007},  $L(p-q, q)$ bounds a rational homology ball if and only if $(p-q)/q \in \mathcal{R}$.  The caveat is that if $a_0 =2$ in the continued fraction expansion $p/q=[a_0, a_1, \ldots, a_k]$, then $p-q < q$, but  since $L(p-q,q) \cong L(p-q,q')$ for $q' \equiv q$ modulo $p-q$, we deduce that $D(p, q)$ bounds a rational homology ball if and only if $(p-q)/q' \in\mathcal{R}$, where $0 < q' < p - q$ is the reduction of $q$ modulo $p - q$. If $a_0 \geq 3$, however,  then the condition in Item~\eqref{2} is equivalent to $(p-q)/q\in\mathcal{R}$.   \end{remark}

The ${\bf D}$-type spherical $3$-manifolds appearing in Item~\eqref{2} above are also known as prism manifolds and they are defined as follows. Suppose that  $(p,q)$ is a coprime pair of integers such that $0 < q < p$ and let $[a_0,a_1,\ldots, a_k]$ be the continued fraction expansion of $p/q$ where the $a_i\geq 2$ for all $i$. 

The spherical $3$-manifold $D(p, q)$  is defined by the plumbing diagram in Figure~\ref{fig: type2}, where we assume that $k \geq 1$, since otherwise it is a lens space.  
\begin{figure}[htb]
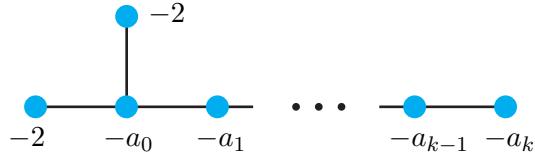
{
\begin{overpic}
{fig/typeD}
\put(47, 44){$-2$}
\put(-6,-3){$-2$}
\put(30,-3){$-a_0$}
\put(65,-3){$-a_1$}
\put(138,-3){$-a_{k-1}$}
\put(174,-3){$-a_k$}
\end{overpic}}
\caption{The plumbing diagram for $D(p,q)$, where $p/q = [a_0, a_1, \ldots, a_k]$.}
\label{fig: type2}
\end{figure}

The spherical $3$-manifolds $T_3$, $T_{27}$, and $I_{49}$, listed in Item~\eqref{3} above, are described,  respectively, by the plumbing diagrams depicted from left to right in Figure~\ref{fig: type3}.

\begin{figure}[htb]
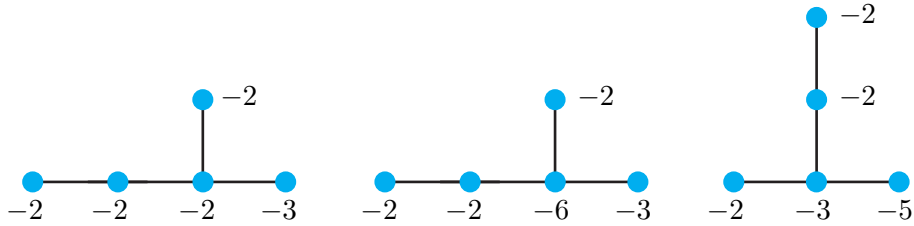
{
\begin{overpic}
{fig/typeTI}
\put(75, 40){$-2$}
\put(-6,-3){$-2$}
\put(26,-3){$-2$}
\put(59,-3){$-2$}
\put(90,-3){$-3$}
\put(210, 40){$-2$}
\put(128,-3){$-2$}
\put(160,-3){$-2$}
\put(193,-3){$-6$}
\put(224,-3){$-3$}
\put(309, 40){$-2$}
\put(309, 71){$-2$}
\put(259,-3){$-2$}
\put(292,-3){$-3$}
\put(323,-3){$-5$}
\end{overpic}}
\caption{The plumbing diagrams for $T_3$, $T_{27}$, and $I_{49}$, respectively.}
\label{fig: type3}
\end{figure}

We will analyze each type of spherical $3$-manifolds listed above to see if it can admit symplectic rational homology ball fillings. 

\subsection{The case of $L(p,q)$} 

This case has already been resolved completely as we elaborated in the introduction. Nevertheless, we analyze this case from another point of view.  Suppose that  $(p,q)$ is a coprime pair of integers such that $0 < q < p$.   Let $p/q=[a_0, a_1, \ldots, a_k]$ be the continued fraction expansion, where $a_i \geq 2$ for all $0 \leq i \leq k$. We begin by recalling a convenient definition from \cite{Lisca2007}. 

\begin{definition} \label{def: i} With $p,q$ and $a_i$ as above, we set $$I(p/q)=\sum_{i=0}^{k} (a_i-3).$$ 
\end{definition}

We now compute the $\theta$-invariant of the universally tight contact structure on $L(p,q)$, which we denote by $\xi_{can}$. 
\begin{proposition} \label{prop: thetalens} For the canonical contact structure $\xi_{can}$ on $L(p,q)$,   we have 
\begin{equation}\label{eq: thetalens}
 \theta (\xi_{can}) = -\left(I(p/q) + \dfrac{2+q+q^*}{p} \right), 
\end{equation}
where $0 < q^* < p$ is the multiplicative inverse of $q$ mod $p$.  
\end{proposition}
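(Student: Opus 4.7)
The plan is to realize $(L(p,q),\xi_{can})$ as the convex boundary of the Stein domain $X$ obtained by Legendrian surgery on a chain of $k+1$ Legendrian unknots, where the $i$-th unknot is a maximal Thurston--Bennequin unknot stabilized $a_i-2$ times all with a common sign. This realizes the standard surgery diagram for $-p/q$ surgery together with the universally tight contact structure $\xi_{can}$. The plumbing intersection form $Q$ is the negative definite tridiagonal matrix with diagonal entries $-a_0,\ldots,-a_k$ and off-diagonal $1$'s, so $\chi(X)=k+2$ and $\sigma(X)=-(k+1)$. The rotation vector is $\mathbf{r}=(a_0-2,\ldots,a_k-2)^T$, giving $c_1^2(X,J)=\mathbf{r}^T Q^{-1}\mathbf{r}$, and therefore
\[
\theta(\xi_{can})=\mathbf{r}^T Q^{-1}\mathbf{r}+k-1
\]
by the definition recalled in Section~\ref{recalltheta}.

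To evaluate the quadratic form, I would exploit the elementary identity $\mathbf{r}=-Q\mathbf{1}-\mathbf{e}_0-\mathbf{e}_k$, where $\mathbf{1}$ is the all-ones vector and $\mathbf{e}_i$ is the $i$-th standard basis vector; this is verified entrywise by comparing $(Q\mathbf{1})_i$ with $a_i-2$. Expanding $\mathbf{r}^T Q^{-1}\mathbf{r}$ with this decomposition, using the symmetry of $Q^{-1}$ together with $\mathbf{1}^T Q\mathbf{1}=-\sum a_i+2k$, a short algebraic manipulation yields
\[
\mathbf{r}^T Q^{-1}\mathbf{r}=-\sum_{i=0}^k a_i+2k+4+(Q^{-1})_{00}+(Q^{-1})_{kk}+2(Q^{-1})_{0k}.
\]
Combining with the definition $I(p/q)=\sum a_i-3(k+1)$, this reduces the proposition to the single identity
\[
(Q^{-1})_{00}+(Q^{-1})_{kk}+2(Q^{-1})_{0k}=-\frac{q+q^*+2}{p}.
\]

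The three corner entries are then computed by Cramer's rule. Let $p_j/q_j$ denote the $j$-th Hirzebruch--Jung convergent of $[a_0,\ldots,a_k]$, so that $p_k=p$, $q_k=q$, and recall the standard fact $\det(-Q)=p$. Deletion of the first row and column of $Q$ produces the matrix associated to $[a_1,\ldots,a_k]=q/(a_0q-p)$ (already in lowest terms), with determinant $\pm q$; deletion of the last row and column produces the matrix associated to $[a_0,\ldots,a_{k-1}]=p_{k-1}/q_{k-1}$, with determinant $\pm p_{k-1}$; and deletion of the first column together with the last row leaves a triangular matrix with $1$'s on the diagonal, hence determinant $1$. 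The classical cross identity $p_k q_{k-1}-p_{k-1} q_k=-1$ combined with $0<p_{k-1}<p$ forces $p_{k-1}=q^*$. Tracking signs yields
\[
(Q^{-1})_{00}=-\frac{q}{p},\qquad (Q^{-1})_{kk}=-\frac{q^*}{p},\qquad (Q^{-1})_{0k}=-\frac{1}{p},
\]
which assembles into the claimed formula.

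The main obstacle is the last step: correctly matching the cofactor at the $(k,k)$ position with $q^*$, rather than merely with some unrelated residue modulo $p$, requires careful bookkeeping with the continued fraction recursion, as the size bound $0<p_{k-1}<p$ is precisely what upgrades the congruence $p_{k-1}\equiv q^{-1}\pmod{p}$ into the equality $p_{k-1}=q^*$. Everything preceding that step is a direct linear-algebraic reorganization, somewhat parallel to (but more explicit than) the quadratic-form manipulations used in Section~\ref{sec: compare}.
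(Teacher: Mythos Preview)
Your argument is correct and takes a genuinely different, more direct route than the paper's. The paper proves this proposition by a detour through the prism manifolds: it first computes $c_1^2$ for the $D(p,q)$ plumbing (Lemma~\ref{lem: theta}), then in Section~\ref{subsec: thetalens} shows that the central $(k+1)\times(k+1)$ submatrix of $Q_D^{-1}$ coincides with $Q_L^{-1}$ for $L(p-q,q)$, and finally extracts the lens space formula by comparing the two quadratic forms and substituting $\widetilde{p}=p-q$. Your approach instead works intrinsically with the linear chain plumbing for $L(p,q)$: the identity $\mathbf{r}=-Q\mathbf{1}-\mathbf{e}_0-\mathbf{e}_k$ collapses $\mathbf{r}^T Q^{-1}\mathbf{r}$ to the three corner entries of $Q^{-1}$, which you then read off via Cramer's rule and the standard continued-fraction recursion. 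Your proof is shorter and self-contained, avoiding any reference to the dihedral case; the paper's route, on the other hand, gets the lens space formula essentially as a by-product once the $D(p,q)$ computation (which is needed anyway for Lemma~\ref{lem: min}) has been carried out.
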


We will give a proof of Proposition~\ref{prop: thetalens} in the Subsection~\ref{subsec: thetalens}. A useful observation about the $\theta$-invariant of the canonical contact structure on a lens space follows from Proposition~\ref{prop: thetalens}. Recall that $\mathcal{O}$ in the lemma below is defined by Equation~\ref{eq: O}.

\begin{lemma} \label{lem: lpq} The canonical contact structure $\xi_{can}$ on the lens space $L(p,q)$ satisfies
$$-2 \leq \theta (\xi_{can}),$$ provided that  $p/q \in\mathcal{R}$, with equality only when $p/q \in \mathcal{O} \subset \mathcal{R}$. 
\end{lemma}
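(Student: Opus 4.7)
By Proposition~\ref{prop: thetalens}, the lemma is equivalent to showing that
\[
F(p,q) := I(p/q) + \frac{2+q+q^*}{p} \leq 2
\]
for all $p/q \in \mathcal{R}$, with equality precisely when $p/q \in \mathcal{O}$. My plan is to split into the equality case $p/q \in \mathcal{O}$ and the strict inequality case $p/q \in \mathcal{R} \setminus \mathcal{O}$.

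For the equality case, the cleanest route is to observe that if $p/q \in \mathcal{O}$ then Lisca's construction produces a symplectic rational homology ball filling of $(L(p,q), \xi_{can})$, and Lemma~\ref{lem: rhb} then forces $\theta(\xi_{can}) = -2$, i.e., $F(p,q) = 2$. One can also verify the identity directly: for $p = m^2$ and $q = mh-1$ with $\gcd(h,m)=1$, the multiplicative inverse of $q$ modulo $p$ is $q^* = m(m-h)-1$, so $q + q^* = m^2 - 2 = p-2$ and hence $(2+q+q^*)/p = 1$; and a short induction on $m$, using the recursive description of the Wahl continued fractions of $m^2/(mh-1)$, gives $I(p/q) = 1$.

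For the strict inequality, I would invoke Lisca's combinatorial description of $\mathcal{R}$: every element of $\mathcal{R}$ can be obtained from a base element in $\mathcal{O}$ by a finite sequence of explicit \emph{inflation moves} on the continued fraction of $p/q$, each of which increments certain entries and inserts strings of $2$'s. Induction on the number of such moves would reduce the problem to showing that a single inflation move applied to a continued fraction with $F(p,q) \leq 2$ produces a new continued fraction with $F(p',q') < 2$. Combined with the base case, this yields the claim.

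The main obstacle is carrying out this inductive step. The change in the combinatorial term $I(p/q) = \sum(a_i - 3)$ under an inflation move is immediate: each inserted $2$ contributes $-1$ and each incremented entry contributes $+1$. The arithmetic term $(2+q+q^*)/p$ transforms more subtly. Writing the continued fraction as a product of matrices $\matrixs{a_i}{-1}{1}{0}$ yields explicit formulas for $(p,q)$, and hence for $q^*$, before and after an inflation move. The heart of the argument is to patiently combine these two changes and verify that their sum is strictly negative whenever the move is non-trivial. Because Lisca's classification involves only finitely many move types (up to the symmetry $q \leftrightarrow q^*$ and up to obvious flips of the continued fraction), the final check reduces to a finite, albeit intricate, computation.
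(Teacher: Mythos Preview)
Your treatment of the equality case $p/q \in \mathcal{O}$ matches the paper's: both invoke Lisca's symplectic rational homology ball filling together with Lemma~\ref{lem: rhb}, and both note the direct verification that $I(p/q)=1$ and $(2+q+q^*)/p=1$.

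For $p/q \in \mathcal{R}\setminus\mathcal{O}$, however, your inductive plan via inflation moves is far more laborious than what is actually needed, and you do not carry it out. The paper bypasses this entirely by quoting a single combinatorial consequence of Lisca's classification (see also \cite[Appendix~A.2]{AcetoGollaLarsonLecuona2020pre}): for $p/q\in\mathcal{R}\setminus\mathcal{O}$ one has $I(p/q)\leq 0$. Granting this, the strict inequality is immediate from Formula~\eqref{eq: thetalens}: since $q^*<p$ and (assuming $p\geq q+2$) $q\leq p-2$, we get $(2+q+q^*)/p<2$, whence $\theta(\xi_{can})=-(I(p/q)+(2+q+q^*)/p)>-2$. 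The residual case $p=q+1$ is dispatched by noting that the only such element of $\mathcal{R}$ is $4/3$, for which $\theta(\xi_{can})=1$.

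So the missing idea in your proposal is simply that the bound $I(p/q)\leq 0$ is already available off the shelf; there is no need to track how both $I$ and the arithmetic term $(2+q+q^*)/p$ change under each inflation move. Your approach would presumably succeed if pushed through, but it trades a one-line citation for a multi-case computation you yourself flag as ``intricate'' and leave unfinished.
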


\begin{proof} According to Lisca \cite{Lisca2007}, the condition $p/q \in\mathcal{R}$ is equivalent to the condition that $L(p, q)$ bounds a rational homology ball.  By Equation~\ref{eq: O}, $p/q \in \mathcal{O}$ means that $p=m^2$ and $q=mh-1$ for some coprime integers $0< h < m$.  For $p/q \in \mathcal{O}$, we know that $\theta (\xi_{can})=-2$, since the contact $3$-manifold $(L(p,q), \xi_{can})$ is Stein fillable in this case. One can also verify this fact using Formula~\ref{eq: thetalens} as follows. 

A final $2$-expansion of a continued fraction $[c_1, \ldots, c_k]$  is defined to be either $$[2, c_1, \ldots, c_{k-1},1+c_k]\;\; \mbox{or}\;\;  [1+c_1, c_2, \ldots, c_k, 2].$$ It is well-known (see, for example, \cite[Proposition 4.1]{SSW08}) that every rational number that belongs to the set $\mathcal{O}$ can be described by a continued fraction obtained by starting from $[4]$ and applying finitely many final $2$-expansions.  It follows that $I(p/q)=1$ for any $p/q \in \mathcal{O}$, since $I  (4/1)$=1 and a final $2$-expansion does not change the $I$-value. Moreover, for  $p=m^2$ and $q=mh-1$,  we have $q^* = m (m-h)-1$ since $q q^* \equiv 1 \mod p$ and thus  $$2+q+q^*=2+mh-1+m (m-h)-1=m^2=p,$$ implying  by Formula~\ref{eq: thetalens} that if $p/q \in \mathcal{O}$, then $\theta (\xi_{can})=-2$. 

For $p/q \in \mathcal{R}\setminus \mathcal{O}$, the inequality $I(p/q) \leq 0$ holds by Lisca's work \cite{Lisca2007} (see also, \cite[Appendix~A.2]{AcetoGollaLarsonLecuona2020pre}) and thus assuming further that $p \geq q+2$, we have $-2 < \theta (\xi_{can}),$ by Formula~\ref{eq: thetalens},  since  $q^*<p$. When $p=q+1$, the quotient $p/q$ belongs to  $\mathcal{R}$ if and only if $q=3$ and in that case, one can verify that $\theta (\xi_{can})=1$.  
\end{proof} 
\begin{remark} The converse of Lemma~\ref{lem: lpq} does not hold. For example, we have $-2 < -1 = \theta (\xi_{can})$ for $L(8,3)$,  but $8/3 \notin \mathcal{R}$.  
\end{remark} 

We can now determine which contact structures on lens spaces admit symplectic rational homology ball fillings. 
\begin{proposition}\label{prop1}
A contact structure $\xi$ on a lens space $L(p,q)$ admits a symplectic rational homology ball filling if and only if $p/q\in\mathcal{O}$ and $\xi=\pm \xi_{can}$. 
\end{proposition}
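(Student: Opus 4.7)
The plan is to chain together three already-established ingredients: the numerical obstruction $\theta(\xi)=-2$ from Lemma~\ref{lem: rhb}, the fact from Proposition~\ref{canminimizes} (extended to lens spaces by Remark~\ref{canminimizes1}) that $\xi_{can}$ strictly minimizes the $\theta$-invariant among tight contact structures, and the quantitative bound $\theta(\xi_{can})\geq -2$ provided by Lemma~\ref{lem: lpq} whenever $p/q\in\mathcal{R}$. Each of these has been proved earlier in the paper, so the proposition becomes essentially a matter of assembly.

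For the forward direction, assume $(L(p,q),\xi)$ admits a symplectic rational homology ball filling $X$. Then Lemma~\ref{lem: rhb} forces $\theta(\xi)=-2$, and since $X$ is in particular a smooth rational homology ball with boundary $L(p,q)$, Lisca's characterization gives $p/q\in\mathcal{R}$. Applying Lemma~\ref{lem: lpq} yields $\theta(\xi_{can})\geq -2$. If $\xi$ were not isotopic to $\pm\xi_{can}$, Proposition~\ref{canminimizes} via Remark~\ref{canminimizes1} would give the strict inequality $\theta(\xi_{can})<\theta(\xi)=-2$, contradicting what we just obtained. Hence $\xi\simeq\pm\xi_{can}$. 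Since $\theta$ is independent of the orientation of the plane field, $\theta(\xi_{can})=\theta(\xi)=-2$, and the equality case of Lemma~\ref{lem: lpq} is exactly $p/q\in\mathcal{O}$.

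For the converse, when $p/q\in\mathcal{O}$, Lisca's construction already recalled in the introduction produces a symplectic rational homology ball filling of $(L(p,q),\xi_{can})$; the same filling, together with the symmetry under reversing the plane field orientation (which does not affect whether a filling exists, since the almost complex structure on the filling may be adjusted accordingly), handles $-\xi_{can}$. There is no genuine obstacle in this argument: all the real work has been done in Section~\ref{sec: compare} and Subsection~\ref{subsec: thetalens}. The only point requiring care is verifying that Proposition~\ref{canminimizes}, stated for small Seifert fibered spaces, does apply verbatim to lens spaces (which is the content of Remark~\ref{canminimizes1}, whose justification amounts to noting that Wu's classification and the concavity argument of Lemma~\ref{lem: concave} go through unchanged for the standard negative-definite plumbings of lens spaces), and tracking the orientation convention carefully so that $\pm\xi_{can}$ is properly accounted for.
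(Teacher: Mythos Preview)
Your proof is correct and follows essentially the same route as the paper's: both combine Lemma~\ref{lem: rhb}, Proposition~\ref{canminimizes} (via Remark~\ref{canminimizes1}), and Lemma~\ref{lem: lpq} to pin down $\theta(\xi)=-2$ exactly when $p/q\in\mathcal{O}$ and $\xi=\pm\xi_{can}$, and then invoke Lisca's construction for the converse. Your version is in fact slightly more careful than the paper's in that you explicitly invoke Lisca's smooth classification to place $p/q$ in $\mathcal{R}$ before applying Lemma~\ref{lem: lpq}, whereas the paper's proof tacitly assumes this hypothesis is in force.
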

\begin{proof}
By Proposition~\ref{canminimizes} (more precisely the remark after the proposition) and Lemma~\ref{lem: lpq} we see that $\theta(\xi)\geq-2$ for all tight contact structures $\xi$ on $L(p,q)$ and $\theta(\xi)=-2$ if and only if $p/q\in\mathcal{O}$ and $\xi=\pm\xi_{can}$. Thus by Lemma~\ref{lem: rhb}  $\xi$ can bound a symplectic rational homology ball if and only if $p/q\in\mathcal{O}$ and $\xi= \pm \xi_{can}$; moreover, in this case it actually does bound a symplectic rational homology ball by \cite{Lisca08} (see also \cite[Figure~2]{EtnyreRoy21}).
\end{proof}

\subsection{The case of $D(p,q)$} 
Suppose that  $(p,q)$ is a coprime pair of integers such that $0 < q < p$ and let
$p/q=[a_0, a_1, \ldots, a_k]$ be the continued fraction expansion, where $a_i \geq 2$ for all $0 \leq i \leq k$. Let $X(p,q)$ be the smooth $4$-manifold described by the plumbing graph in Figure~\ref{fig: type2}.

Note that $X(p,q)$ is obtained by attaching $(k+3)$ $2$-handles to the $4$-ball along the unknots $K_{-1}, K_0, K_1, \ldots,  K_{k+1}$ with framings  $-2$, $-a_0$, $-a_1$, $\ldots$, $-a_k$, $-2$, respectively, as  indicated in  Figure~\ref{fig: type2}. The boundary of  $X(p,q)$ is the spherical $3$-manifold $D(p,q)$. Since $D(p,q)$ is a small Seifert fibered space with $e_0\leq -2$, and when $e_0=-2$ it is an $L$-space, our discussion in the introduction shows that any tight contact structure on $D(p,q)$ is obtained by the appropriate Legendrian realization of the $K_i$.

Let $\xi$ be any given tight (hence Stein fillable) contact structure on $D(p,q)$, which is represented by a Legendrian surgery diagram as described above.  By fixing a Legendrian surgery diagram for the contact structure $\xi$, we also fix a Stein surface $(X(p,q), J_\xi)$ inducing  $\xi$ on its boundary $D(p,q)$. 

\begin{lemma} \label{lem: theta} For the canonical contact structure $\xi_{can}$ on $D(p,q)$,   we have 
\begin{equation}\label{eq: c2}
c_1^2 (X(p,q), J_{\xi_{can}})  = 2k+3-(a_0 +a_1+\cdots+a_k) - \dfrac{1}{[a_k, a_{k-1}, \ldots, a_1, a_0-1]}.
\end{equation}
\end{lemma}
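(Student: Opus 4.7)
My approach is to compute $c_1^2(X(p,q), J_{\xi_{can}})$ directly from Gompf's formula $c_1^2 = \mathbf{r}^T Q^{-1} \mathbf{r} = \mathbf{r} \cdot \mathbf{b}$, where $Q$ is the intersection form of the plumbing in Figure~\ref{fig: type2}, $\mathbf{r}$ is the vector of rotation numbers of the Legendrian unknots describing $\xi_{can}$, and $\mathbf{b}$ is the unique rational solution of $Q\mathbf{b} = \mathbf{r}$ (which exists because $D(p,q)$ is a rational homology sphere, so $Q$ is rationally invertible). For the canonical contact structure, each $(-2)$-framed unknot is unstabilized (rotation $0$), while each $(-a_i)$-framed unknot is stabilized $a_i - 2$ times with a common sign, so $r_i = a_i - 2$ for $0 \le i \le k$.

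Writing out $Q\mathbf{b} = \mathbf{r}$, the two valence-one vertices of the plumbing immediately force $b_{-1} = b_{k+1} = b_0/2$, and eliminating these leaves the chain system
\begin{align*}
(1 - a_0) b_0 + b_1 &= a_0 - 2,\\
b_{i-1} - a_i b_i + b_{i+1} &= a_i - 2 \quad (1 \le i \le k-1),\\
b_{k-1} - a_k b_k &= a_k - 2.
\end{align*}
The key move is the substitution $c_i := b_i + 1$, which renders the bulk recurrence homogeneous, $c_{i-1} - a_i c_i + c_{i+1} = 0$, with boundary relations $c_1 = (a_0 - 1) c_0$ and $c_{k-1} - a_k c_k = -1$. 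This is exactly the three-term recurrence obeyed by numerators of negative continued fractions, so by induction $c_i = P_{i-1}' \cdot c_0$, where $P_i'$ denotes the numerator of $[a_0 - 1, a_1, \ldots, a_i]$ (with the convention $P_{-1}' = 1$). The right-hand boundary then yields $c_0 = 1/P_k'$, and in particular $c_k = P_{k-1}'/P_k'$.

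To identify $c_k$ with the reciprocal of the continued fraction appearing in the statement, I will use the involution $M \mapsto J M^T J$ on $SL_2(\mathbb{Z})$, with $J = \matrixs{1}{0}{0}{-1}$. Since each basic matrix $M_j = \matrixs{a_j}{-1}{1}{0}$ satisfies $M_j^T = J M_j J$, this involution reverses the order of any product. Reading the top-over-bottom entries of the first column of $M_{a_0-1} M_{a_1} \cdots M_{a_k}$ recovers $[a_0-1, a_1, \ldots, a_k] = P_k'/Q_k'$, while performing the same reading on the reversed product $M_{a_k} M_{a_{k-1}} \cdots M_{a_0-1}$ gives $[a_k, a_{k-1}, \ldots, a_1, a_0-1] = P_k'/P_{k-1}'$. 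Hence $c_k = 1/[a_k, a_{k-1}, \ldots, a_1, a_0-1]$.

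Finally, for the telescoping: using the recurrence together with the boundary identities $a_0 c_0 = c_0 + c_1$ and $a_k c_k = c_{k-1} + 1$, direct regrouping gives $\sum_{i=0}^k a_i c_i = 2 \sum_{i=0}^k c_i - c_k + 1$. Substituting into
\[
c_1^2 = \mathbf{r} \cdot \mathbf{b} = \sum_{i=0}^k (a_i - 2)(c_i - 1) = \sum_{i=0}^k a_i c_i - 2 \sum_{i=0}^k c_i - \sum_{i=0}^k a_i + 2(k+1),
\]
the $\sum c_i$ terms cancel and leave $c_1^2 = 2k + 3 - \sum_{i=0}^k a_i - c_k$, which combined with the expression for $c_k$ is exactly the claimed formula. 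The main point of care is the bookkeeping of the two asymmetric boundary contributions in the telescoping step; the matrix-transposition identity is the substantive input, and everything else reduces to routine manipulation of three-term recurrences.
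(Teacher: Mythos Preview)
Your proof is correct and follows essentially the same strategy as the paper's: both solve $Q\mathbf{b}=\mathbf{r}$ explicitly in terms of the continued-fraction recurrence (your $P'_i$ are the paper's $(-1)^{i+1}B_{i+1}$, and your $c_i-1$ coincide with the paper's entries $-1+(-1)^{k+1-i}B_i/B_{k+1}$), and then evaluate $\mathbf{r}\cdot\mathbf{b}$. Your homogenizing substitution $c_i=b_i+1$ and the $M\mapsto JM^TJ$ argument for the reversed continued fraction are slightly cleaner than the paper's guess-and-verify of the solution vector and its direct induction on $B_k/B_{k+1}$, but the underlying computation is the same.
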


\begin{proof}  The intersection matrix of the $4$-manifold $X(p,q)$ can be given as $$I_k=\left[\begin{array}{cccccc}
-2 & 1 & & & & \\
1 & -a_0 & 1 & &  & 1 \\
& 1 & -a_1 & \ddots & & \\
& & \ddots & \ddots & 1 \\
& & & 1 & -a_k & \\
& 1 & & & & -2
\end{array}\right] $$ where the top-left $-2$ in the matrix corresponds to the left-most $-2$-framed unknot in Figure~\ref{fig: type2}, and all blank entries are assumed to be zero.  By a slight abuse of notation, we denote the intersection matrix above by  $I_k$, and set    
\begin{equation} \label{eq: B} B_0=1, \;\;\; B_1=-(a_0-1), \;\;\; \mbox{and}\;\;\; B_{r+1}=-(a_rB_r+B_{r-1})\;\;\; \mbox{for all} \;\;\; 1 \leq r \leq k. 
\end{equation}
We claim that $\det I_k = 4 B_{k+1}.$ To prove our claim we use induction on $k$. For $k=0$, we have 
$$\begin{vmatrix}
-2 & 1 & 0 \\
1 & -a_0 & 1 \\
 0 & 1 & -2
\end{vmatrix} = -2 \begin{vmatrix} 
-a_0 & 1 \\
 1 & -2
\end{vmatrix} - \begin{vmatrix} 
1 & 1 \\
 0 & -2
\end{vmatrix} = -4a_0 + 4 = 4 B_1, $$ where the last equality follows from Definition~(\ref{eq: B}). 
For $k=1$, by expanding along the third row, and using the result for $k=0$, we have 
$$\underbrace{\begin{vmatrix}
-2 & 1 & 0 & 0\\
1 & -a_0 & 1 & 1\\
 0 & 1 & -a_1 &  0 \\
 0 & 1 & 0 &   -2 
\end{vmatrix}}_{\mbox{expand along 3rd row}}= - \underbrace{\begin{vmatrix}
-2 & 0 & 0 \\
1 & 1 & 1 \\
 0 & 0 & -2
\end{vmatrix}}_{4B_0}  - a_1  \underbrace{\begin{vmatrix}
-2 & 1 & 0 \\
1 & -a_0 & 1 \\
 0 & 1 & -2
\end{vmatrix}}_{4B_1}  =  4 (-a_1B_1 - B_0)=4B_2,$$ where the last equality follows from Definition~(\ref{eq: B}). 
Similarly, by expanding along the row just above the last one, and using induction, we have 
$$\det I_{k+1} = \begin{vmatrix}
-2 & 1 \\
1 & -a_0 & 1 & &  & & 1 \\
& 1 & -a_1 & \ddots & & \\
& & \ddots & \ddots & 1 \\
& & & 1 & -a_{k-1} & \\
& & &  & 1& 1& \\
& 1 & & & & & -2
\end{vmatrix}  - a_{k+1} \underbrace{\begin{vmatrix}
-2 & 1 \\
1 & -a_0 & 1 & &  &  1 \\
& 1 & -a_1 & \ddots & & \\
& & \ddots & \ddots & 1 \\
& & & 1 & -a_{k} & \\
 & 1 & & & &  -2
\end{vmatrix}}_{4B_{k+1}} $$ and expanding the first determinant again along the row just above the last one, and using induction, we have 
$\det I_{k+1} = 4(-B_k -  a_{k+1} B_{k+1}) =  4 B_{k+2}$, by Definition~(\ref{eq: B}), proving our claim. 

Let ${\bf r}_{can}$ denote the rotation vector for the canonical contact structure $\xi_{can}$, obtained by Legendrian realizing each $K_i$ with maximal possible rotation number, i.e., $$ {\bf r}_{can} = [0 \;\; a_0-2 \;\; a_1-2 \; \cdots \; \;a_k-2\;\; 0]^T.$$  Note that 
\[
c_1^2 (X(p,q), J_{\xi_{can}})= {\bf r}_{can}^T I^{-1}_{k} {\bf r}_{can},
\]
by \cite{Gompf98} and we compute this quantity in three steps: 
\begin{subequations}\label{eqn:binomi}
      \begin{align}
       {\bf r}_{can}^T I^{-1}_{k} {\bf r}_{can} & =\dfrac{1}{B_{k+1}} \sum_{i=0}^{k} (a_i -2) (-B_{k+1} + (-1)^{k+1-i} B_i)\\
 & = 2k+3 -(a_0 +a_1+\cdots+a_k) + \dfrac{B_k}{B_{k+1}} \\
 & = 2k+3-(a_0 +a_1+\cdots+a_k) - \dfrac{1}{[a_k, a_{k-1}, \ldots, a_1, a_0-1]} 
      \end{align}
    \end{subequations}
where the subequations in ~(\ref{eqn:binomi}) can be proved by induction on $k$ as follows.

To prove the subequation~(\ref{eqn:binomi}c), we simply observe that 
    $$- \dfrac{1}{[a_k, a_{k-1}, \ldots, a_1, a_0-1]} = - \dfrac{1}{{a_k - \dfrac{1}{[a_{k-1}, \ldots, a_1, a_0-1]}}}= - \dfrac{1}{{a_k + \dfrac{B_{k-1}}{B_k}}}= \dfrac{B_k}{B_{k+1}},$$ where the last equality follows from Definition~(\ref{eq: B}). 
    
To prove the subequation~(\ref{eqn:binomi}b), we first observe that 
    $$ \sum_{i=0}^{k} (-1)^{k+1-i}  (a_i -2)  B_i = - (a_k-2) B_k - \sum_{i=0}^{k-1} (-1)^{k-i}  (a_i -2) B_i $$ and by induction we assume that the quantity under the summation sign on the right-hand side is equal to $B_k+ B_{k-1}$, implying that the left-hand side is equal to $$-a_kB_k +2 B_k-B_k -B_{k-1} =-a_kB_k -B_{k-1}+ B_k =B_{k+1}+B_k,$$ where the last equality follows from Definition~(\ref{eq: B}). Consequently, $$\dfrac{1}{B_{k+1}} \sum_{i=0}^{k} (a_i -2) (-B_{k+1} + (-1)^{k+1-i} B_i)=
  2k+3 -(a_0 +a_1+\cdots+a_k) + \dfrac{B_k}{B_{k+1}}. $$

To prove the subequation~(\ref{eqn:binomi}a), it suffices to show that 
\begin{equation} \label{eq: matr} 
 I_k \begin{bmatrix}
\frac{1}{2} (-1+(-1)^{k+1} B_0/B_{k+1}) \\
-1+(-1)^{k+1} B_0/B_{k+1}  \\
-1+(-1)^{k} B_1/B_{k+1} \\
.\\
 .\\
 .\\
-1+(-1)  B_k/B_{k+1} \\
\frac{1}{2} (-1+(-1)^{k+1} B_0/B_{k+1})
\end{bmatrix}  = \begin{bmatrix}
0 \\
a_0-2  \\
a_1-2 \\
.\\
 .\\
 .\\
a_k-2 \\
0 
\end{bmatrix}.
\end{equation}

To verify  Equation~(\ref{eq: matr}) for $k=0$, we need to show that 
$$ \begin{bmatrix}
-2 & 1 & 0 \\
1 & -a_0 & 1 \\
 0 & 1 & -2
\end{bmatrix} \begin{bmatrix}
\frac{1}{2} (-1- B_0/B_{1}) \\
-1- B_0/B_{1} \\
\frac{1}{2} (-1- B_0/B_{1}) = \end{bmatrix} = \begin{bmatrix}
0 \\
a_0-2 \\
0
\end{bmatrix}$$ 
and it is clear that only the middle entry needs some verification: $$(1-a_0)(-1 -\dfrac{B_0}{B_1}) = (1-a_0)(-1 +\dfrac{1}{a_0-1})= a_0-2,$$ where we used the fact that $B_1= -(a_0-1)$ for the first equality.

To verify  Equation~(\ref{eq: matr}) for $k=1$, we need to show that $$ \begin{bmatrix}
-2 & 1 & 0 & 0\\
1 & -a_0 & 1 & 1\\
0& 1 & -a_1 &  0\\
 0 & 1 & 0 &  -2
\end{bmatrix} \begin{bmatrix}
\frac{1}{2} (-1+B_0/B_{2}) \\
-1+B_0/B_{2} \\
-1-B_1/B_{2} \\
\frac{1}{2} (-1+B_0/B_{2}) = \end{bmatrix} = \begin{bmatrix}
0 \\
a_0-2 \\
a_1-2 \\
0
\end{bmatrix}$$ and again it is clear that only the second and third entries need some verification. To verify the second entry we calculate that  
$$(1-a_0) (-1+\dfrac{B_0}{B_2}) -1 - \dfrac{B_1}{B_2} =a_0-1+  \dfrac{B_1}{B_2}  -1 - \dfrac{B_1}{B_2}=a_0-2,$$ where we used the fact that $B_1= -(a_0-1)$ for the first equality and to verify the third  entry we calculate that 
$$-1+\dfrac{B_0}{B_2} - a_1 (-1-\dfrac{B_1}{B_2}) = -1 +a_1 + \dfrac{a_1 B_1 + B_0}{B_2} = a_1-2,$$ where we used the fact that $B_2= -(a_1B_1 + B_0)$ for the last equality.

In general,  to verify  Equation~(\ref{eq: matr}) for an arbitrary integer $k\geq 2$, we proceed as follows.  It is clear that the first and last entries of the vector on the right-hand side of Equation~(\ref{eq: matr}) are both zero because
$$ -2 [\frac{1}{2} (-1+(-1)^{k+1} B_0/B_{k+1})] + [-1+(-1)^{k+1} B_0/B_{k+1}]=0.$$ 

To verify that the second entry of the vector on the right-hand side of Equation~(\ref{eq: matr}), we just calculate 
$$(1-a_0)(-1+(-1)^{k+1} \dfrac{B_0}{B_{k+1}})+(-1+(-1)^{k} \dfrac{B_1}{B_{k+1}})= a_0-2 + \dfrac{(-1)^{k+1}}{B_{k+1}}((1-a_0)B_0  -B_1)),$$ which is equal to $a_0-2$ since $B_0=1$ and $B_1=1-a_0$ by  Definition~(\ref{eq: B}). 

For $i=3, 4, \ldots, k+1$, to verify the $i$th   entry of the vector on the right-hand side of Equation~(\ref{eq: matr}),  we just need to calculate the expression
$$(-1+(-1)^{j+1} \dfrac{B_{i-3}}{B_{k+1}}) - a_{i-2} (-1+(-1)^{j} \dfrac{B_{i-2}}{B_{k+1}})+(-1+(-1)^{j-1} \dfrac{B_{i-1}}{B_{k+1}}),$$ where $j=k+3-i$, which is equal to 
$$a_{i-2} -2 + \dfrac{(-1)^{j+1}}{B_{k+1}} (B_{i-3}+a_{i-2}B_{i-2}+B_{i-1}).$$ But note that $B_{i-3}+a_{i-2}B_{i-2}+B_{i-1}=0$, by  Definition~(\ref{eq: B}), and hence the result is just $a_{i-2} -2$.

Finally, to verify the $(k+2)$nd entry of the vector on the right-hand side of Equation~(\ref{eq: matr}),  we calculate 
$$ -1 +\dfrac{B_{k-1}}{B_{k+1}} -a_{k} (-1-\dfrac{B_{k}}{B_{k+1}})= -1 + a_{k}+\dfrac{a_{k}B_{k} + B_{k-1}}{B_{k+1}}=  a_{k} -2,$$ where we used the fact that $B_{k+1}= -(a_{k}B_{k} + B_{k-1})$ for the last equality.
\end{proof}

\begin{proposition} \label{prop: theta} For the canonical contact structure $\xi_{can}$ on $D(p,q)$,   we have 
\begin{equation}\label{eq: theta}
\theta (\xi_{can}) =1- I(p/q)-\dfrac{1}{[a_k, a_{k-1}, \ldots, a_1, a_0-1]}.
\end{equation}
\end{proposition}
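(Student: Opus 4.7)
The plan is to combine Lemma~\ref{lem: theta} with the standard Gompf formula $\theta(\xi)=c_1^2(X,J)-2\chi(X)-3\sigma(X)$ by computing the Euler characteristic and signature of the plumbing $4$-manifold $X(p,q)$ whose boundary is $D(p,q)$. Since $X(p,q)$ is built from a single $0$-handle with $k+3$ two-handles attached along the unknots $K_{-1},K_0,\ldots,K_{k+1}$, we immediately have $\chi(X(p,q))=k+4$. The intersection matrix written out in the proof of Lemma~\ref{lem: theta} is the linking matrix of the plumbing diagram in Figure~\ref{fig: type2}, and since $D(p,q)$ is the link of a quotient surface singularity this plumbing graph is negative definite (this is also the hypothesis under which the canonical contact structure is discussed, so $a_i\ge 2$ with strict inequality somewhere as needed). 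Hence $\sigma(X(p,q))=-(k+3)$.

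Substituting Lemma~\ref{lem: theta} together with these values into the definition of $\theta$, I would compute
\begin{align*}
\theta(\xi_{can}) &= \Bigl(2k+3 - \textstyle\sum_{i=0}^{k} a_i - \tfrac{1}{[a_k,a_{k-1},\ldots,a_1,a_0-1]}\Bigr) - 2(k+4) - 3\bigl(-(k+3)\bigr)\\
&= 3k+4 - \textstyle\sum_{i=0}^{k} a_i - \tfrac{1}{[a_k,a_{k-1},\ldots,a_1,a_0-1]}.
\end{align*}
The final step is to replace $\sum_{i=0}^{k} a_i$ by $I(p/q)+3(k+1)$ using Definition~\ref{def: i}, which gives the desired expression
\[
\theta(\xi_{can}) = 1 - I(p/q) - \tfrac{1}{[a_k,a_{k-1},\ldots,a_1,a_0-1]}.
\]

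In this approach there is essentially no obstacle once Lemma~\ref{lem: theta} is in hand; the only items that need any justification are the assertions $\chi(X(p,q))=k+4$ and $\sigma(X(p,q))=-(k+3)$. The first is a handle count. For the second, the cleanest justification is to cite the fact that the intersection matrix is (up to sign) the one associated to the dual resolution graph of the $\mathbf{D}$-type quotient singularity whose link is $D(p,q)$, and hence negative definite by Grauert's criterion. Alternatively, one can verify negative-definiteness directly from the determinant computation $\det I_k = 4B_{k+1}$ carried out in the proof of Lemma~\ref{lem: theta} together with the nested $a_i\ge 2$ condition, which ensures all leading principal minors alternate in sign.
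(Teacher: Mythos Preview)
Your proof is correct and follows essentially the same route as the paper: invoke Lemma~\ref{lem: theta} for $c_1^2$, plug in $\chi(X(p,q))=k+4$ and $\sigma(X(p,q))=-(k+3)$, and rewrite the result using Definition~\ref{def: i}. The only superfluous remark is the parenthetical ``with strict inequality somewhere as needed'': even when every $a_i=2$ the $\mathbf{D}$-type plumbing matrix is negative definite, so no extra hypothesis is required.
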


\begin{proof} By Lemma~\ref{lem: theta} and  Definition~\ref{def: i}, we have 
\begin{equation} \label{eq: gompf}
\begin{split}
\theta (\xi_{can}) &= c_1^2 (X(p,q), J_{\xi_{can}}) - 3\sigma(X(p,q))-2\chi (X(p,q)) \\
 & = 2k+3-(a_0 +a_1+\cdots+a_k) - \dfrac{1}{[a_k, \ldots, a_1, a_0-1]} +3(k+3) -2(k+4) \\
  & =  1+ 3k+3 -(a_0 +a_1+\cdots+a_k) - \dfrac{1}{[a_k, \ldots, a_1, a_0-1]} \\
   & = 1- I(p/q)-\dfrac{1}{[a_k, a_{k-1}, \ldots, a_1, a_0-1]}.
\end{split}
\end{equation}
 \end{proof} 

\begin{lemma} \label{lem: min} The canonical contact structure $\xi_{can}$ on $D(p,q)$,  satisfies
$$-2 < \theta (\xi_{can}),$$ provided that  $(p-q)/q' \in \mathcal{R}$, where $0 < q' < p - q$ is
the reduction of $q$ modulo $p - q$.
\end{lemma}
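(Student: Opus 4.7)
\smallskip

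\noindent\textbf{Proof plan.} The plan is to deduce Lemma~\ref{lem: min} from the corresponding lens-space bound Lemma~\ref{lem: lpq} by directly comparing the $\theta$-invariants of $D(p,q)$ and of $L(p-q, q')$. Using the explicit formulas in Proposition~\ref{prop: theta} and Proposition~\ref{prop: thetalens}, I will show that $\theta(\xi_{can}^{D(p,q)}) - \theta(\xi_{can}^{L(p-q, q')}) > 0$; combined with the inequality $\theta(\xi_{can}^{L(p-q, q')}) \geq -2$ that follows from the hypothesis $(p-q)/q' \in \mathcal{R}$ via Lemma~\ref{lem: lpq}, this gives $\theta(\xi_{can}^{D(p,q)}) > -2$ immediately.

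First, I will interpret the reciprocal $1/[a_k,\ldots,a_1,a_0-1]$ arithmetically. When $a_0 \geq 3$, the continued fraction $[a_0-1, a_1, \ldots, a_k]$ equals $(p-q)/q$, so the standard identity that reversing a continued fraction inverts the denominator modulo the numerator gives $[a_k,\ldots,a_1,a_0-1] = (p-q)/(q')^*$ with $q' = q$ and $(q')^* \equiv q^{-1} \pmod{p-q}$. When $a_0 = 2$, the string $[a_k,\ldots,a_1,1]$ is not in reduced form, and I must iteratively apply the elementary identity $[\ldots,x,1] = [\ldots,x-1]$ to cascade through any trailing $2$'s; letting $j\geq 1$ be the smallest index with $a_j \geq 3$ (which must exist, since all $a_i = 2$ would force $p-q = 1$, contradicting $0 < q' < p-q$), this reduction yields $[a_k,\ldots,a_{j+1},a_j-1]$. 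A second reversal and the observation that $q \equiv q' \pmod{p-q}$ (hence $q^{-1} \equiv (q')^{-1} = (q')^*$) produce the standard continued fraction $(p-q)/q' = [a_j-1, a_{j+1}, \ldots, a_k]$ and, in every case, the uniform identity
\[
\frac{1}{[a_k,\ldots,a_1,a_0-1]} = \frac{(q')^*}{p-q}.
\]
Substituting this into Proposition~\ref{prop: theta} and subtracting the expression for $\theta(\xi_{can}^{L(p-q,q')})$ from Proposition~\ref{prop: thetalens} yields
\[
\theta(\xi_{can}^{D(p,q)}) - \theta(\xi_{can}^{L(p-q,q')}) = 1 - \bigl(I(p/q) - I((p-q)/q')\bigr) + \frac{2+q'}{p-q}.
\]

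Next, applying Definition~\ref{def: i} to the continued fraction expression $(p-q)/q' = [a_j-1, a_{j+1}, \ldots, a_k]$ (with $j=0$ when $a_0 \geq 3$) and noting that the omitted entries $a_0, \ldots, a_{j-1}$ each contribute $-1$ to $I(p/q)$, I find that $I(p/q) - I((p-q)/q') = 1 - j$. Substituting back gives
\[
\theta(\xi_{can}^{D(p,q)}) - \theta(\xi_{can}^{L(p-q,q')}) = j + \frac{2+q'}{p-q} > 0,
\]
and Lemma~\ref{lem: lpq} applied to $L(p-q,q')$ (using $(p-q)/q' \in \mathcal{R}$) finishes the proof. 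The main obstacle will be the continued-fraction bookkeeping in the $a_0 = 2$ case: the cascade $[a_k,\ldots,a_1,1] \to [a_k,\ldots,a_j-1]$ must be tracked carefully, and the arithmetic identification $(q')^* \equiv q^{-1} \pmod{p-q}$ must be invoked so that the reversed fraction matches the lens space actually appearing in the hypothesis.
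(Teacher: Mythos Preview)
Your argument is correct. Both your proof and the paper's rest on Proposition~\ref{prop: theta} together with Lisca's inequality $I\le 1$ for $\mathcal{R}$, but you package things differently. The paper bounds the two pieces of $\theta(\xi_{can}^{D(p,q)}) = 1 - I(p/q) - 1/[a_k,\ldots,a_0-1]$ separately: from $(p-q)/q'\in\mathcal{R}$ it deduces $I(p/q)\le 2$ (respectively $I(p/q)\le 1-l$ in the $a_0=2$ case) and then observes $[a_k,\ldots,a_0-1]>1$, handling the cases $a_0>2$ and $a_0=2$ in parallel. You instead identify the continued-fraction term arithmetically as $(q')^*/(p-q)$, which lets you compute the \emph{exact} difference $\theta^{D(p,q)}-\theta^{L(p-q,q')} = j + (2+q')/(p-q)$ and then appeal directly to Lemma~\ref{lem: lpq}. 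Your route is a bit cleaner---one uniform formula covers both cases, and you reuse Lemma~\ref{lem: lpq} rather than reinvoking Lisca's $I$-bound---at the cost of the extra continued-fraction bookkeeping establishing $1/[a_k,\ldots,a_0-1]=(q')^*/(p-q)$. Note also that your exclusion of the case $a_i=2$ for all $i$ is legitimate: the hypothesis $0<q'<p-q$ already forces $p-q\ge 2$, so this degenerate case (which the paper treats by a direct computation) simply does not arise under the stated assumption.
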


\begin{proof} Suppose that  $(p,q)$ is a coprime pair of integers such that $0 < q < p$ and let
$p/q=[a_0, a_1, \ldots, a_k]$ be the continued fraction expansion, where $a_i \geq 2$ for all $0 \leq i \leq k$.  

First we assume that $p-q > q$, which is equivalent to assuming that $a_0 > 2$. The spherical $3$-manifold $D(p,q)$ is rational homology cobordant to the lens space $L(p-q, q)$, which can be described by the linear plumbing graph with weights  $-(a_0-1), -a_1, \ldots, -a_k$, see Remark~\ref{rem: cond}. By \cite{Lisca2007} the condition $(p-q)/q \in\mathcal{R}$ is equivalent to the condition that $L(p-q, q)$ bounds a rational homology ball.  Assuming that $(p-q)/q \in\mathcal{R}$, we conclude that the inequality $I ((p-q)/q) \leq 1$ is satisfied by Lisca's work \cite{Lisca2007} (see also, \cite[Appendix~A.2]{AcetoGollaLarsonLecuona2020pre}), and thus $I (p/q) \leq 2$ since $I((p-q)/q)=I(p/q)-1$. As a consequence, we obtain that $-1 \leq 1-I(p/q)$.  Note that $1 < [a_k, a_{k-1}, \ldots, a_1, a_0-1],$ unless $a_i =2$ for all $0 \leq i \leq k$, but since we assumed that $a_0 > 2$, the inequality  $-2 < \theta (\xi_{can})$ follows immediately from Formula~\ref{eq: theta}.

Next we assume that $p-q < q$, which is equivalent to assuming that $a_0 = 2$. If  $a_i =2$, for all $0 \leq i \leq k$, then one may easily compute that $-2 <k+1= \theta (\xi_{can})$. Otherwise,  there exists $0 \leq l < k$ such that $a_i =2$, for all $0 \leq i \leq l$, but $a_{l+1} > 2$. Then $D(p,q)$ is rational homology cobordant to $L(p-q, q')$, which can be described by the linear plumbing graph with weights  $-(a_{l+1}-1), -a_{l+2}, \ldots, -a_k$, by successively blowing down the $(-1)$-curves.  But the string $((a_{l+1}-1), a_{l+2}, \ldots, a_k)$ or equivalently $(p-q)/q'$ is constrained as above so that  $I ((p-q)/q') \leq 1$. It follows that $$ 0 \leq l \leq 1+ l -I((p-q)/q') =1-I(p/q),$$ and thus we obtain  $-1 \leq l-1 < \theta (\xi_{can})$ again by Formula~\ref{eq: theta}.
\end{proof}

\begin{remark}
The converse of Lemma~\ref{lem: min} does not hold. For example, we have $-2 < -3/8 = \theta (\xi_{can})$ for $D(11,3)$,  but $8/3 \notin \mathcal{R}$.  
\end{remark} 

We can now rule out symplectic rational homology ball fillings of the $D(p,q)$. 
\begin{proposition}\label{prop2}
No contact structure on $D(p,q)$ is symplectically filled by a rational homology ball.
\end{proposition}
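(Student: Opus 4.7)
The plan is to make two initial reductions. First, since any symplectic rational homology ball filling is in particular a smooth rational homology ball filling, Theorem~\ref{thm: ball} of Choe and Park lets me assume $(p-q)/q' \in \mathcal{R}$; otherwise $D(p,q)$ does not even smoothly bound a rational homology ball and there is nothing to prove. Second, any contact structure admitting a symplectic filling is tight, so I only need to rule out tight contact structures on $D(p,q)$.

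Next, I would identify $D(p,q)$ with the small Seifert fibered space $Y(-2;\, 1/2,\, 1/2,\, q/p)$ by reading off Figure~\ref{fig: type2}: the central vertex has weight $-2$ (so $e_0=-2$), the two length-one legs are single $-2$ vertices (so $r_1=r_2=1/2$), and the remaining leg has weights $-a_0,\ldots,-a_k$ where $p/q=[a_0,\ldots,a_k]$, so that $-1/r_3=[-a_0,\ldots,-a_k]=-p/q$ and $r_3=q/p\in(0,1)$. Since $D(p,q)$ is spherical, it is an $L$-space, so Ghiggini's classification of tight contact structures on such small Seifert fibered spaces with $e_0=-2$ applies, and in particular the hypotheses of Proposition~\ref{canminimizes} are met. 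That proposition then gives $\theta(\xi_{can}) \leq \theta(\xi)$ for every tight contact structure $\xi$ on $D(p,q)$, with strict inequality unless $\xi$ is isotopic to $\pm\xi_{can}$.

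Finally, I would combine this with Lemma~\ref{lem: min}, which under the standing hypothesis $(p-q)/q' \in \mathcal{R}$ gives $\theta(\xi_{can}) > -2$. Together these inequalities yield $\theta(\xi) > -2$ for every tight contact structure $\xi$ on $D(p,q)$, and Lemma~\ref{lem: rhb} then rules out any symplectic rational homology ball filling, completing the argument.

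I do not expect any serious obstacle: both key ingredients, Lemma~\ref{lem: min} and Proposition~\ref{canminimizes}, are already in hand, so the proof is essentially an assembly of previously established results. The one bookkeeping point that deserves care is the sign convention in the continued fraction $[-a_0,\ldots,-a_k]=-[a_0,\ldots,a_k]$ when reading off the Seifert invariant $r_3$; this ensures the normalization $r_3\in(0,1)$ and hence the applicability of Proposition~\ref{canminimizes} via Ghiggini's classification.
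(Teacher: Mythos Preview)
Your approach is essentially identical to the paper's: reduce to the case $(p-q)/q'\in\mathcal{R}$ via Theorem~\ref{thm: ball}, then combine Lemma~\ref{lem: min} with Proposition~\ref{canminimizes} to force $\theta(\xi)>-2$ for every tight $\xi$, and conclude by Lemma~\ref{lem: rhb}. One small correction to your bookkeeping: in Figure~\ref{fig: type2} the trivalent vertex is the one of weight $-a_0$, not $-2$, so $D(p,q)=Y(-a_0;\,1/2,\,1/2,\,r_3)$ with $e_0=-a_0\leq -2$; when $a_0\geq 3$ it is Wu's classification rather than Ghiggini's that applies, but Proposition~\ref{canminimizes} covers both cases, so your conclusion is unaffected.
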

\begin{proof}
If $(p-q)/q'\in \mathcal{R}$ then Lemma~\ref{lem: min} tells us $\theta(\xi_{can})>-2$ and  Proposition~\ref{canminimizes} then tells us that $\theta(\xi)>-2$ for any tight contact structure $\xi$. Since none of these contact structures have $\theta=-2$ they cannot be symplectically filled by a rational homology ball by Lemma~\ref{lem: rhb}. If $(p-q)/q'\not\in \mathcal{R}$ then we know that $D(p,q)$ does not even smoothly bound a rational homology ball by Theorem~\ref{thm: ball}. 
\end{proof}

\subsection{The case of $T_3$, $T_{27}$,  $I_{49}$} 
We begin with a simple observation. 
\begin{lemma} \label{lem: item 3}  The canonical contact structure $\xi_{can}$ on $T_3$, $T_{27}$, and $I_{49}$,  satisfies
$$-2 < \theta (\xi_{can}).$$  \end{lemma}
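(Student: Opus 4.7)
The plan is to carry out a direct computation for each of the three specific plumbings in Figure~\ref{fig: type3}, following the template already developed in Lemma~\ref{lem: theta} and Proposition~\ref{prop: theta} for $D(p,q)$. Since these plumbings have only four or five vertices, everything reduces to a small explicit linear-algebra calculation rather than a general recursion.

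For each of $Y \in \{T_3, T_{27}, I_{49}\}$, let $X$ denote the plumbed $4$-manifold coming from the diagram in Figure~\ref{fig: type3}, which induces $\xi_{can}$ on the boundary via Legendrian realization with each unknot of weight $-a$ stabilized $a-2$ times with a consistent sign. The steps are as follows. First, write down the negative-definite intersection matrix $Q$ directly from the plumbing graph (each diagonal entry is the weight $-a_i$, off-diagonal entries are $1$ when the corresponding vertices are joined by an edge and $0$ otherwise). Second, form the rotation vector ${\bf r}_{can}$: its $i$-th entry is $a_i - 2$, which is the maximal rotation number of the Legendrian unknot with $\tb = 1 - a_i$. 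Third, by \cite{Gompf98},
\[
c_1^2(X, J_{\xi_{can}}) \;=\; {\bf r}_{can}^{T}\, Q^{-1}\, {\bf r}_{can}.
\]
Fourth, since $X$ is built from a single $0$-handle and $n$ two-handles and is negative definite, $\chi(X) = n+1$ and $\sigma(X) = -n$, so the identity $\theta(\xi_{can}) = c_1^{2}(X, J_{\xi_{can}}) - 3\sigma(X) - 2\chi(X)$ collapses to
\[
\theta(\xi_{can}) \;=\; c_1^2(X, J_{\xi_{can}}) + n - 2.
\]
Finally, compare with $-2$.

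The computation of $c_1^2$ is the only nontrivial step, but it simplifies dramatically because most weights in the three diagrams equal $-2$, so most entries of ${\bf r}_{can}$ are zero. Concretely, for $T_3$ only the vertex of weight $-3$ contributes, so $c_1^2$ reduces to a single diagonal entry of $Q^{-1}$; for $T_{27}$ the vertices of weight $-6$ and $-3$ contribute, so $c_1^2$ is a small combination of three entries of $Q^{-1}$; and for $I_{49}$ the vertices of weight $-3$ and $-5$ contribute similarly. Each of these entries of $Q^{-1}$ can be computed by Cramer's rule with a $4\times 4$ or $5\times 5$ cofactor expansion, using $|\det Q| = |H_1(Y)|$ (so $3$, $27$, and $49$ respectively) to streamline the denominator.

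The expected outcome is that in each case $c_1^2(X, J_{\xi_{can}})$ comes out negative but close to zero in absolute value, and the $+n-2$ correction pushes $\theta(\xi_{can})$ strictly above $-2$. The main obstacle is purely bookkeeping: three separate matrix inversions with a few fractional cofactors, and one must take care with the signs of the weights (the graph weights are $-a_i < 0$ while the rotation-number convention uses $a_i - 2 \geq 0$). No new contact- or symplectic-topological input is needed beyond Gompf's formula; the statement then feeds directly into Lemma~\ref{lem: rhb} and, via Proposition~\ref{canminimizes}, rules out symplectic rational homology ball fillings for all tight contact structures on these three manifolds.
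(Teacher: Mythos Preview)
Your approach is exactly what the paper does: it simply states the computed values $\theta(\xi_{can}) = 22/9$, $-122/81$, and $-18/49$ for $T_3$, $T_{27}$, and $I_{49}$ respectively, all of which exceed $-2$. Your outline of how to obtain these via $c_1^2 = {\bf r}_{can}^T Q^{-1} {\bf r}_{can}$ and $\theta = c_1^2 + n - 2$ is correct and is precisely the computation being summarized.

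One bookkeeping slip to flag: your parenthetical ``$|\det Q| = |H_1(Y)|$ (so $3$, $27$, and $49$ respectively)'' is wrong for the two tetrahedral manifolds. The subscript in the Choe--Park notation $T_m$ is not the order of $H_1$; in fact $|H_1(T_3)| = 9$ and $|H_1(T_{27})| = 81$ (as the denominators $9$ and $81$ in the paper's stated $\theta$-values confirm), while $|H_1(I_{49})| = 49$ is correct. If you actually run the Cramer's-rule computation you will of course discover this, but be aware of it so you don't short-circuit the determinant calculation with the wrong value.
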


\begin{proof} We simply compute that $\theta (\xi_{can})$ is equal to $22/9$,   $-122/81$,  and $-18/49$, respectively,  for $T_3$, $T_{27}$, and $I_{49}$.
\end{proof}
We can now rule out symplectic rational homology ball fillings of $T_3$, $T_{27}$, and $I_{49}$.
\begin{proposition}\label{prop3}
No contact structure on  $T_3$, $T_{27}$, or $I_{49}$ can be symplectically filled by a rational homology ball. 
\end{proposition}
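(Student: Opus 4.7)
The plan is to mimic the template used in the proofs of Propositions~\ref{prop1} and~\ref{prop2}. First I would identify each of the three manifolds as a small Seifert fibered space $Y(e_0; r_1, r_2, r_3)$ by reading off its plumbing diagram in Figure~\ref{fig: type3}: the central vertex gives $e_0$ and each leg determines an $r_i$ via the continued fraction formula $-1/r_i = [-a_0^i, \ldots, -a_{n_i}^i]$. A quick computation yields $T_3 = Y(-2; 2/3, 1/2, 1/3)$, $T_{27} = Y(-6; 2/3, 1/2, 1/3)$, and $I_{49} = Y(-3; 2/3, 1/2, 1/5)$. Since every spherical $3$-manifold is an $L$-space, each of the three satisfies the hypotheses of Proposition~\ref{canminimizes} (either $e_0 \leq -3$, or $e_0 = -2$ with $Y$ an $L$-space).

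Next I would combine Lemma~\ref{lem: item 3} with Proposition~\ref{canminimizes}. The lemma already records the explicit values $\theta(\xi_{can}) \in \{22/9,\, -122/81,\, -18/49\}$ for $T_3$, $T_{27}$, $I_{49}$ respectively, each of which is strictly greater than $-2$. Proposition~\ref{canminimizes} then guarantees that $\theta(\xi_{can}) < \theta(\xi)$ for any contact structure $\xi$ not isotopic to $\pm \xi_{can}$. Chaining the two inequalities yields $\theta(\xi) > -2$ for every tight contact structure on any of the three manifolds.

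To finish I would invoke Lemma~\ref{lem: rhb}: any contact manifold bounding a symplectic rational homology ball must have $\theta = -2$, which is excluded for every tight structure by the previous step. Overtwisted contact structures admit no weak symplectic filling at all, so no contact structure on $T_3$, $T_{27}$, or $I_{49}$ can symplectically bound a rational homology ball. I do not anticipate any serious obstacle; this is essentially an assembly of machinery already developed in the preceding sections, and the only care needed is in identifying the Seifert invariants from the plumbings in Figure~\ref{fig: type3} in order to verify the hypotheses of Proposition~\ref{canminimizes}.
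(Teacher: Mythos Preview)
Your proposal is correct and follows essentially the same approach as the paper: use Lemma~\ref{lem: item 3} to see $\theta(\xi_{can})>-2$, invoke Proposition~\ref{canminimizes} to handle all other tight structures, and conclude via Lemma~\ref{lem: rhb}. Your explicit identification of the Seifert invariants (and the observation that spherical manifolds are $L$-spaces, needed for $T_3$ with $e_0=-2$) is a helpful verification that Proposition~\ref{canminimizes} applies, and the paper additionally remarks that Theorem~\ref{main1} provides an alternative route for the $e_0\leq -3$ cases.
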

\begin{proof}
Lemma~\ref{lem: rhb} and Lemma~\ref{lem: item 3} rule out $\xi_{can}$ having a symplectic rational homology ball filling and the other contact structure can be ruled out using Proposition~\ref{canminimizes}. (We can alternatively rule out such fillings using Theorem~\ref{main1}). 
\end{proof}
\subsection{Symplectic rational homology ball fillings of spherical $3$-manifolds} 
In this section we complete the proof of Theorem~\ref{thm: spherical}. We recall that the theorem says that if a contact structure $\xi$ on the spherical $3$-manifold $Y$, oriented as the link of the corresponding quotient singularity, admits a symplectic rational homology ball filling,  then $Y$ is orientation-preserving diffeomorphic to a lens space $L(p,q)$ with $p/q \in \mathcal{O}$, and $\xi$ is contactomorphic to $\xi_{can}$. 
\begin{proof}[Proof of Theorem~\ref{thm: spherical}]
The theorem is a direct consequence of Theorem~\ref{thm: ball} and Propositions~\ref{prop1}, \ref{prop2}, and~\ref{prop3}. 
\end{proof} 

\subsection{A proof of Proposition~\ref{prop: thetalens}}  \label{subsec: thetalens}
In this section we give a proof of  Proposition~\ref{prop: thetalens}, which says that $ \theta (\xi_{can}) = -\left(I(p/q) + (2+q+q^*)/p \right)$ for the canonical contact structure $\xi_{can}$ on $L(p,q)$,  where $0 < q^* < p$ is the multiplicative inverse of $q$ mod $p$.  

\begin{proof}[Proof of Proposition~\ref{prop: thetalens}] Suppose that  $(p,q)$ is a coprime pair of integers such that $0 < q < p$ and let
$p/q=[a_0, a_1, \ldots, a_k]$ be the continued fraction expansion, where $a_i \geq 2$ for all $0 \leq i \leq k$.  
It is easy to check that the lens space $L(p-q, q)$ can be described by the linear plumbing graph with weights  $-(a_0-1), -a_1, \ldots, -a_k$, where we assume $a_0 > 2$; otherwise one can blow-down the $(-1)$-curve.  

The $(k+1) \times (k+1)$ intersection matrix $Q_{L}$ of the plumbed $4$-manifold $W(p-q,q)$ with boundary $L(p-q,q)$  can be given as 
$$Q_{L}=\left[\begin{array}{cccccc}
 -(a_0-1) & 1 & &    \\
 1 & -a_1 & 1& &  \\ 
 & 1 & -a_2 & \ddots &  \\
 & & \ddots & \ddots & 1 \\
 & & & 1 & -a_k  \\
\end{array}\right], $$ whereas the $(k+3) \times (k+3)$ intersection matrix  $Q_{D}$ of the plumbed $4$-manifold $X(p,q)$ with boundary $D(p,q)$ can be given as  
$$Q_{D}=\left[\begin{array}{cccccc}
-2 & 1 & & & & \\
1 & -a_0 & 1 & &  & 1 \\
& 1 & -a_1 & \ddots & & \\
& & \ddots & \ddots & 1 \\
& & & 1 & -a_k & \\
& 1 & & & & -2
\end{array}\right]. $$ 

We claim that $\det Q_D = 4 \det  Q_L$. To prove our claim, first we add the last row of $Q_D$ to its second row, and observe that 
$$   \det Q_D = \begin{vmatrix}
-2 & 1 \\
1 & -a_0 & 1 & &  & 1 \\
& 1 & -a_1 & \ddots & & \\
& & \ddots & \ddots & 1 \\
& & & 1 & -a_{k} & \\
& 1 & & & & -2
\end{vmatrix} =  \begin{vmatrix}
-2 & 1 \\
1 & -(a_0-1)& 1 & &  &  -1 \\
& 1 & -a_1 & \ddots & & \\
& & \ddots & \ddots & 1 \\
& & & 1 & -a_{k} & \\
& 1 & & & &  -2
\end{vmatrix}. $$ Now we expand the determinant on the right hand side along its top row, to obtain 
$$ \det Q_D = -2  \begin{vmatrix}
 -(a_0-1)& 1 & &  &  -1 \\
 1 & -a_1 & \ddots & & \\
 & \ddots & \ddots & 1 \\
 & & 1 & -a_{k} & \\
 1 & & & &  -2
\end{vmatrix} -  \begin{vmatrix}
1 &  1 & &  & & -1 \\
& -a_1 & 1 & & \\
 &  & \ddots & \ddots \\
 &&& & 1\\
 && & 1 & -a_{k} & \\
&&  & & &   -2
\end{vmatrix} $$ and finally expanding both of these determinants along their bottom rows we get three determinants, one of which is 
$$ 4 \begin{vmatrix}
 -(a_0-1) & 1 & &    \\
 1 & -a_1 & 1& &  \\ 
 & 1 & -a_2 & \ddots &  \\
 & & \ddots & \ddots & 1 \\
 & & & 1 & -a_k 
\end{vmatrix} = 4 \det Q_L$$ and the sum of the remaining two is given as follows:  
$$-2 (-1)^{k+2} \begin{vmatrix}
  1 & &  &  -1 \\
  -a_1 & \ddots & & \\
  & \ddots & 1 \\
  & 1 & -a_{k} & 
\end{vmatrix} +2  \begin{vmatrix}
1 &  1 & &  &  \\
& -a_1 & 1 &  \\
 &  & \ddots & \ddots \\
 &&& & 1\\
 && & 1 & -a_{k}  
\end{vmatrix}.$$ By expanding the first determinant along the last column and the second determinant along the first column, we see that the sum above is equal to $$\underbrace{(-2)(-1)^{k+2} (-1) (-1)^{k+1}}_{-2}  \begin{vmatrix}
 -a_1 & 1 &  \\
   & \ddots & \ddots \\
 && & 1\\
 & & 1 & -a_{k}  
\end{vmatrix} +2  \begin{vmatrix}
 -a_1 & 1 &  \\
   & \ddots & \ddots \\
 && & 1\\
 & & 1 & -a_{k}  
\end{vmatrix} =0$$ proving our claim that $\det Q_D = 4 \det  Q_L$. 

In the following, for any square matrix $M$, we use $M(i,j)$ to denote its entry at the $i$th row and $j$th column and  $M_{i,j}$ to denote the matrix obtained from $M$ by deleting its $i$th row and $j$th column. 

Next we claim that the  $(k+1) \times (k+1)$ central submatrix of $Q^{-1}_{D}$ is equal to the matrix $Q^{-1}_{L}$. To prove this claim,  suppose that $3 \leq i\leq k+2$ and $3 \leq j \leq k+2$ (the case $i=2$ or $j=2$ is easier and we will discuss these cases separately at the end of our proof). To calculate $Q^{-1}_{D} (i,j)$ we need to calculate $\det (Q_D)_{i,j}$ and to do so we first add the last row of $(Q_D)_{i,j}$ to its second row and call the new matrix $(Q_D)'_{i,j}$. Then expanding along the top row of $(Q_D)'_{i,j}$ we obtain 
$$\det (Q_D)_{i,j} = \det (Q_D)'_{i,j} = -2 \det ((Q_D)'_{i,j})_{1,1} - \det ((Q_D)'_{i,j})_{1,2}$$  and by expanding both of these determinants along the last row, we get three determinants one of which is $$4 \det (((Q_D)'_{i,j})_{1,1})_{k+1, k+1} = 4 \det (Q_L)_{i-1, j-1} $$ and the remaining two cancel out  each other. Hence we showed that $$\det (Q_D)_{i,j} = 4 \det (Q_L)_{i-1, j-1} $$ which proves that $Q^{-1}_{D} (i,j) = Q^{-1}_{L} (i-1,j-1)$, in the light of the fact that $\det Q_D = 4 \det  Q_L$, provided that  $i \neq 2$ and $j \neq 2$. Finally, assuming that $j=2$ (the case $i=2$ is similar), we have $$ \det (Q_D)_{i,2} = -2  \det ((Q_D)_{i,2})_{1,1} = 4 \det (((Q_D)_{i,2})_{1,1})_{k+1, k+1}  = 4 \det (Q_L)_{i-1, 1} $$ which again proves that $Q^{-1}_{D} (i,2) = Q^{-1}_{L} (i-1,1)$.

This simple observation that  the  $(k+1) \times (k+1)$ central submatrix of $Q^{-1}_{D}$ is equal to the matrix $Q^{-1}_{L}$ can now be used to derive a formula for $c_1^2 (W(p-q,q), J_{\xi_{can}})$ using the formula $c_1^2 (X(p,q), J_{\xi_{can}})$ that we already obtained in  Lemma~\ref{lem: theta} as follows. Let $${\bf r}_L=   [a_0-3 \;\; a_1-2 \; \cdots \; \;a_k-2]^T  \in \mathbb{R}^{k+1}$$  denote the rotation vector for the canonical contact structure $\xi_{can}$ on $L(p-q,q)$ and set $$ {\bf \tilde{r}}_L =  [0 \;\; a_0-3 \;\; a_1-2 \; \cdots \; \;a_k-2\;\; 0]^T  \in \mathbb{R}^{k+3}.$$ By setting ${\bf e}=[0 \;\; 1\;\; 0\;\; \cdots \;\;0]^T \in \mathbb{R}^{k+3}$, the rotation vector for the canonical contact structure $\xi_{can}$ on $D(p,q)$ can be given by ${\bf r}_D = {\bf \tilde{r}}_L +{\bf e}$. Now, we have  

\begin{subequations}\label{eq: comp}
      \begin{align}
c_1^2 (X(p,q), J_{\xi_{can}}) &=  {\bf r}^T_D Q^{-1}_{D} {\bf r}_D \\
 & =  ({\bf \tilde{r}}_L +{\bf e})^T Q^{-1}_{D} ({\bf \tilde{r}}_L +{\bf e})  \\
  & =   {\bf \tilde{r}}_L ^T Q^{-1}_{D} {\bf \tilde{r}}_L  +{\bf \tilde{r}}_L ^T Q^{-1}_{D} {\bf e} +  {\bf e}^T Q^{-1}_{D} {\bf \tilde{r}}_L + {\bf e}^T Q^{-1}_{D} {\bf e}
      \end{align}
    \end{subequations}

Note that the first term ${\bf \tilde{r}}_L ^T Q^{-1}_{D} {\bf \tilde{r}}_L$ in ~(\ref{eq: comp}c) can be replaced with  ${\bf r}_L ^T Q^{-1}_{L} {\bf r}_L$ since  the  $(k+1) \times (k+1)$ central submatrix of $Q^{-1}_{D}$ is equal to the matrix $Q^{-1}_{L}$, and the first and last entries of  ${\bf \tilde{r}}_L $ are zeros. Moreover,   we know that $${\bf r}_L ^T Q^{-1}_{L} {\bf r}_L =  c_1^2 (W(p-q,q), J_{\xi_{can}}).$$  

The last term ${\bf e}^T Q^{-1}_{D} {\bf e}$  in ~(\ref{eq: comp}c) is exactly $Q^{-1}_{D} (2,2)= Q^{-1}_{L}(1,1)$ which we compute as follows. Note that $\det Q_L = (-1)^{k+1} (p-q) $ and $\det (Q_L)_{1,1} = (-1)^{k} q$. Therefore, the $(1,1)$-entry of $Q^{-1}_{L}$ is equal to $$Q^{-1}_{L}(1,1)=\dfrac{\det (Q_L)_{1,1}}{\det Q_L} = \dfrac{(-1)^{k} q}{(-1)^{k+1} (p-q) }=-\dfrac{q}{p-q}.$$ 

Finally, we compute the sum ${\bf \tilde{r}}_L ^T Q^{-1}_{D} {\bf e} +  {\bf e}^T Q^{-1}_{D} {\bf \tilde{r}}_L$ of the two middle terms  in ~(\ref{eq: comp}c). Note that $Q^{-1}_{D} {\bf e}$ is the second column of $Q^{-1}_{D}$ and hence  ${\bf \tilde{r}}_L ^T Q^{-1}_{D} {\bf e}$  is the dot product of ${\bf \tilde{r}}_L $ with the second column of $Q^{-1}_{D}$. Similarly,   ${\bf e}^T Q^{-1}_{D} {\bf \tilde{r}}_L$ is the dot product of ${\bf \tilde{r}}_L $ with the second row of $Q^{-1}_{D}$. But since $Q^{-1}_{D}$ is symmetric,  we conclude that the two terms in the sum $${\bf \tilde{r}}_L ^T Q^{-1}_{D} {\bf e} +  {\bf e}^T Q^{-1}_{D} {\bf \tilde{r}}_L$$ are actually the same and therefore their sum is equal to twice the dot product of ${\bf \tilde{r}}_L $ with the second row of $Q^{-1}_{D}$. To compute this dot product we make use of Equation~(\ref{eq: matr}), where $I_k$ there is denoted by $Q_D$ in this proof. According to Equation~(\ref{eq: matr}), 
$$Q^{-1}_D {\bf r}_D= \begin{bmatrix}
\frac{1}{2} (-1+(-1)^{k+1} B_0/B_{k+1}) \\
-1+(-1)^{k+1} B_0/B_{k+1}  \\
-1+(-1)^{k} B_1/B_{k+1} \\
.\\
 .\\
 .\\
-1+(-1)  B_k/B_{k+1} \\
\frac{1}{2} (-1+(-1)^{k+1} B_0/B_{k+1})
\end{bmatrix} $$ which implies that the dot product of the second row of $Q^{-1}_{D}$ with  ${\bf r}_D ={\bf \tilde{r}}_L + {\bf e} $ is equal to $$-1+(-1)^{k+1} B_0/B_{k+1} =-1+ (-1)^{k+1}\dfrac{1}{(-1)^{k+1} (p-q)} = -1 + \dfrac{1}{p-q}.$$ Thus to calculate the dot product of ${\bf \tilde{r}}_L $ with the second row of $Q^{-1}_{D}$ we just need to {\em subtract} from this expression the dot product of $\bf e$ with the second row of $Q^{-1}_{D}$, which is nothing but $$Q^{-1}_{D}(2,2)= Q^{-1}_{L}(1,1)=-\dfrac{q}{p-q}.$$ As a consequence, we  calculate that the sum of the two middle terms  in ~(\ref{eq: comp}c) as 
$$
2 \left(-1 + \dfrac{1}{p-q} + \dfrac{q}{p-q}\right).
$$

It follows that   
\begin{equation} \label{eq: compare} 
\begin{split}
c_1^2 (X(p,q), J_{\xi_{can}}) &= c_1^2 (W(p-q,q), J_{\xi_{can}})  + 2 \left(-1 + \dfrac{1}{p-q} + \dfrac{q}{p-q}\right)- \dfrac{q}{p-q} \\
 & = c_1^2 (W(p-q,q), J_{\xi_{can}}) -2 + \dfrac{2+q}{p-q}
\end{split}
\end{equation}

Let $\theta_{D(p,q)}$ denote the $\theta$-invariant of the canonical contact structure on $D(p,q)$ and similarly let $\theta_{L(p-q,q)}$ denote the $\theta$-invariant of the canonical contact structure on $L(p-q,q)$. An elementary calculation about the signatures and Euler characteristics of the $4$-manifolds $X(p,q)$ and $W(p-q,q)$, yields that $$ \theta_{D(p,q)}= c_1^2 (X(p,q), J_{\xi_{can}}) +k+1$$ and $$ \theta_{L(p-q,q)}= c_1^2 (W(p-q,q), J_{\xi_{can}}) +k-1.$$ Combining with  Equation~(\ref{eq: compare}), we conclude that 
$$\theta_{L(p-q,q)} = \theta_{D(p,q)} -  \dfrac{2+q}{p-q},$$  which in turn, leads to $$\theta_{L(p-q,q)} = 1- I(p/q)-\dfrac{1}{[a_k, a_{k-1}, \ldots, a_1, a_0-1]} -  \dfrac{2+q}{p-q}$$ by Proposition~\ref{prop: theta}. By substituting $\widetilde{p}$ for $p-q$, we obtain $$\theta_{L(\widetilde{p},q)} = 1- I(\widetilde{p}/q)-1- \dfrac{q*}{\widetilde{p}} -  \dfrac{2+q}{\widetilde{p}} = -I(\widetilde{p}/q) -  \dfrac{2+q+q*}{\widetilde{p}} $$ which proves the desired result, by replacing $\widetilde{p}$ by $p$ and noticing that the notation $\theta_{L(p,q)}$ is used in our proof to denote  $\theta(\xi_{can})$ for the canonical contact structure $\xi_{can}$ on $L(p,q)$. 
\end{proof}

\bibliography{references}
\bibliographystyle{plain}

\end{document}